\documentclass[12pt,oneside, reqno]{amsart}
 \usepackage{graphicx}
\usepackage{placeins}
 \usepackage{hhline}
\usepackage{amsmath,amsthm,amscd,amssymb,mathrsfs}
\usepackage{xspace}
\usepackage[all]{xypic}
\usepackage{booktabs} 
\usepackage{physics}
\usepackage{array} 
\newcolumntype{C}{>{$}c<{$}} 
\usepackage{hyperref}
\usepackage[lite,initials]{amsrefs}
\usepackage{verbatim}
\usepackage{amscd} 
\usepackage[all]{xy} 
\usepackage{youngtab} 
\usepackage{ytableau} 
\usepackage{nicefrac}
\usepackage{xfrac}
\usepackage{longtable} 

\usepackage{mathdots}
\usepackage{tikz}
\DeclareGraphicsRule{.tif}{png}{.png}{`convert #1 `dirname #1`/`basename #1 .tif`.png}
\newcommand{\ccircle}[1]{* + <1ex>[o][F-]{#1}}
\newcommand{\ccirc}[1]{\xymatrix@1{* + <1ex>[o][F-]{#1}}}
\usepackage[T1]{fontenc} 
\usepackage{cleveref} 
\numberwithin{equation}{section} 

\topmargin=-0.3in %
\evensidemargin=0in %
\oddsidemargin=0in %
\textwidth=6.5in %
\textheight=9.0in %
\headsep=0.4in %

\usepackage{color}
\makeatletter
\newtheorem{rep@theorem}{\rep@title}
\newcommand{\newreptheorem}[2]{%
\newenvironment{rep#1}[1]{%
 \def\rep@title{#2 \ref{##1}}%
 \begin{rep@theorem}}%
 {\end{rep@theorem}}}
\makeatother

\newtheorem{theorem}{Theorem}[section]
\newreptheorem{theorem}{Theorem}
\newreptheorem{lemma}{Lemma}

\newtheorem{lemma}[theorem]{Lemma}

\newtheorem{prop}[theorem]{Proposition}

\theoremstyle{definition}
\newtheorem{definition}[theorem]{Definition}

\newenvironment{example}
 {\pushQED{\qed}\examplex}
 {\popQED\endexamplex}
\theoremstyle{remark}
\newtheorem{remark}[theorem]{Remark}

\newcommand{\defi}[1]{\textsf{#1}} 
\newcommand{\isom}{\cong}

\newcommand{\End}{\operatorname{End}}
\newcommand{\Gr}{\operatorname{Gr}}
\newcommand{\SL}{\operatorname{SL}}
\newcommand{\GL}{\operatorname{GL}}

\newcommand{\F}{\mathcal{F}}

\newcommand{\PP}{\mathbb{P}}
\newcommand{\FF}{\mathbb{F}}

\newcommand{\RR}{\mathbb{R}}
\newcommand{\NN}{\mathbb{N}}
\newcommand{\CC}{\mathbb{C}}
\newcommand{\ZZ}{\mathbb{Z}}
\newcommand{\QQ}{\mathbb{Q}}

\def\bw#1{{\textstyle\bigwedge^{\hspace{-.2em}#1}}}

\def\phi{ \varphi }

\def \a{\alpha}
\def \b{\beta}

\def \g{\mathfrak{g}}
\def \fa{\mathfrak{a}}
\def \e{\mathfrak{e}}

\def \sl{\mathfrak{sl}}

\newcommand{\ad}{\operatorname{ad }}
\newcommand{\Mat}{\operatorname{Mat}}

\newcounter{nameOfYourChoice}

\begin{document}
\date{\today}

\author{Fr\'ed\'eric Holweck}\email{frederic.holweck@utbm.fr}
\address{Laboratoire Interdisciplinaire Carnot de Bourgogne, ICB/UTBM, UMR 6303 CNRS,
Universit\'e Bourgogne Franche-Comt\'e, 90010 Belfort Cedex, France
}
\address{Department of Mathematics and Statistics,
Auburn University,
Auburn, AL, USA
}

\author{Luke Oeding}\email{oeding@auburn.edu}
\address{Department of Mathematics and Statistics,
Auburn University,
Auburn, AL, USA
}

\title[Toward Jordan Decompositions of Tensors]{Toward Jordan Decompositions for Tensors}
\begin{abstract} 
We expand on an idea of Vinberg to take a tensor space and the natural Lie algebra that acts on it and embed their direct sum into an auxiliary algebra. Viewed as endomorphisms of this algebra, we associate adjoint operators to tensors. We show that the group actions on the tensor space and on the adjoint operators are consistent, which means that the invariants of the adjoint operator of a tensor, such as the Jordan decomposition, are invariants of the tensor.  We show that there is an essentially unique algebra structure that preserves the tensor structure and has a meaningful Jordan decomposition.
 We utilize aspects of these adjoint operators to study orbit separation and classification in examples relevant to tensor decomposition and quantum information. 
\end{abstract}
\maketitle

\section{Introduction}
The classical Jordan decomposition of square matrices is the following:
\begin{theorem}[Jordan Canonical Form]
Let $\FF$ be an algebraically closed field. Every $A\in \Mat_{n\times n }(\FF)$ is similar to its Jordan canonical form, which is a decomposition:
\[
A \sim J_{k_1}(\lambda_1) \oplus \cdots \oplus J_{k_d}(\lambda_d) \oplus {\bf 0 }
,\]
where the $k\times k $ Jordan blocks are 
$J_k(\lambda) = 
\left(\begin{smallmatrix} 
\lambda & 1 \\[-1ex]
& \lambda & \small{\ddots} \\[-1ex] 
& & \ddots & 1 \\
& & & \lambda
\end{smallmatrix}\right)
$. 
The algebraic multiplicity of the eigenvalue $\lambda$ is the sum $\sum_{\lambda_j = \lambda} k_j$, and the geometric multiplicity of $\lambda$ is the number $s$ of blocks with eigenvalue $\lambda$, $s = \sum_{\lambda_j = \lambda} 1$. 
\end{theorem}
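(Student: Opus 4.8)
The plan is to establish the canonical form in two stages: first reduce to the nilpotent case by splitting $\FF^n$ into generalized eigenspaces, and then classify a single nilpotent operator via \emph{Jordan chains}. Since $\FF$ is algebraically closed, the characteristic polynomial factors as $\chi_A(x) = \prod_{i=1}^{r}(x-\lambda_i)^{m_i}$ with the $\lambda_i$ distinct. First I would invoke primary (generalized eigenspace) decomposition: the polynomials $(x-\lambda_i)^{m_i}$ are pairwise coprime and their product annihilates $A$ by Cayley--Hamilton, so a Bézout / partial-fractions argument produces $A$-invariant projections onto $V_i := \ker(A-\lambda_i\Id)^{m_i}$ and an $A$-invariant direct sum $\FF^n = \bigoplus_i V_i$ with $\dim V_i = m_i$. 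On each summand the operator $N_i := (A-\lambda_i\Id)|_{V_i}$ is nilpotent by construction, so it suffices to put each $N_i$ into a direct sum of nilpotent Jordan blocks and then add $\lambda_i\Id$ back to recover blocks $J_k(\lambda_i)$.

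The structural heart is the lemma for a single nilpotent operator $N$ on a finite-dimensional space $V$: there are vectors $v_1,\dots,v_s$ and exponents $k_1\ge\cdots\ge k_s$ so that $\{\,N^j v_t : 1\le t\le s,\ 0\le j<k_t\,\}$ is a basis of $V$ with $N^{k_t}v_t=0$. Reading each chain from its top $v_t$ downward, $N$ acts on $\span\{v_t,Nv_t,\dots,N^{k_t-1}v_t\}$ as the nilpotent block $J_{k_t}(0)$, so $N \sim \bigoplus_t J_{k_t}(0)$ and hence $A|_{V_i}\sim\bigoplus_t J_{k_t}(\lambda_i)$. Assembling these over all $i$ gives the stated form; the trailing ${\bf 0}$ is just the aggregate of the blocks $J_1(0)$ (equivalently, the summand $V_i$ with $\lambda_i=0$, if one occurs).

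The main obstacle is proving the existence of this Jordan-chain basis, which I would do by induction on $\dim V$. If $N=0$ any basis works; otherwise $W:=N(V)$ is $N$-invariant with $\dim W<\dim V$, so by induction $W$ has a chain basis coming from chains with tops $w_1,\dots,w_p$ and lengths $\ell_1,\dots,\ell_p$. Writing $w_t=Nu_t$, the chains with tops $u_t$ have lengths $\ell_t+1$ and their bottoms $N^{\ell_t}u_t$ span $W\cap\ker N$; completing these $p$ bottoms to a basis of $\ker N$ by adjoining vectors $z_1,\dots,z_q$ (as length-one chains) yields the desired system. The delicate points are the verification that this combined system is linearly independent and the count, using $\dim V = \dim W + \dim\ker N$, that its cardinality is $\dim V$, so that it is indeed a basis; one also records that the number of chains is $p+q=\dim\ker N$.

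Finally, the multiplicity assertions follow by inspection. The algebraic multiplicity of $\lambda$, namely $\sum_{\lambda_j=\lambda}k_j$, equals $\sum_i\{\dim V_i : \lambda_i=\lambda\}=\sum_i\{m_i:\lambda_i=\lambda\}$, the exponent of $(x-\lambda)$ in $\chi_A$. The geometric multiplicity, the number of blocks with eigenvalue $\lambda$, equals $\dim\ker(A-\lambda\Id)$, since each block $J_k(\lambda)$ contributes exactly a one-dimensional subspace to the $\lambda$-eigenspace and blocks for other eigenvalues contribute nothing. (One could alternatively obtain the whole statement at once from the structure theorem for finitely generated modules over the PID $\FF[x]$ applied to $\FF^n$ with $x$ acting as $A$, decomposing it into cyclic modules $\FF[x]/((x-\lambda)^k)$; I would present the eigenspace argument as it is more self-contained.)
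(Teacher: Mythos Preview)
Your proof is correct and follows a standard textbook route: primary decomposition via Cayley--Hamilton to reduce to the nilpotent case, then an inductive construction of Jordan chains on each generalized eigenspace, with the multiplicity statements read off at the end. The paper, however, does not prove this theorem at all; it is stated in the introduction as classical background motivating the authors' extension of Jordan decomposition to tensors, and no argument is given. So there is nothing to compare your approach against --- your write-up simply supplies a proof the paper omits.
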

One may view the JCF as an expression $A = S+N$ with $S$ diagonal (semisimple) and $N$ upper triangular (nilpotent), and the two commute. The JCF answers an orbit-classification problem: The group $\SL(V)$ acts on the vector space $V\otimes V^{*}$, and JCF gives canonical representatives for the orbits. 

Let $V \cong \FF^n$ be a vector space, which naturally carries the action of the special linear group, denoted $\SL(V)$, of invertible linear transformations with determinant 1. 
Let $A \in \Mat_{n\times n} (\FF)$. Then $A$ represents a linear mapping $T_A \colon V \to V$ with respect to the standard basis of $V$, and $T_A$ is viewed as an element of the algebra of endomorphisms $\End(V)$. As a representation of $\SL(V)$, we have $\End(V) = V \otimes_\FF V^*$. Writing this as a tensor product of $\SL(V)$-modules and keeping track of the dual indicates the induced action, conjugation. 

The analogous statements hold for operators $T_A \in \End(V)$, and one has the \defi{Jordan-Chevalley decomposition}, which is the expression $T_A = T_S + T_N$ where $T_S$ is semisimple (diagonalizable), $T_ N$ is nilpotent, and $T_S$ and $T_N$ commute. Recall that a linear operator $T$ is \defi{nilpotent} if any of the following equivalent conditions hold: $T^r = 0$ for some finite positive integer $r$, if $T$ has no non-zero eigenvalues, or if the characteristic polynomial of $T$ is $\chi_T(t) = t^n$. An operator $T_S$ is semisimple if it is diagonalizable, which one can check by seeing if, for each eigenvalue the geometric and algebraic multiplicities agree.

Like many notions for matrices, there are many possible generalizations to higher-order tensors. Our starting point is to note that the vector space $V\otimes V^{*}$ is also a Lie algebra of endomorphisms and is the Lie algebra of the group $\GL(V)$ acting on it. So, in this case, the orbit problem concerns a Lie algebra acting on itself. Since conjugating by scalar multiples of the identity is trivial, we work with $\SL(V)$ instead since it has the same action as $\GL(V)$ in this case.

For tensors, we consider a tensor space $W$, like $U_{1}\otimes U_{2} \otimes U_{3}$ or $\bw{k} V$, the natural group $G$ acting on it, like $\SL(U_{1})\times \SL(U_{2})\times \SL(U_{3})$ or $\SL(V)$ respectively, and the Lie algebra $\mathfrak g$, and attempt to build a graded algebra starting from their direct sum $\fa = \g \oplus W$ with enough properties so that elements of $W$ can be viewed as adjoint operators acting on $\fa$ whose Jordan decomposition is both non-trivial and invariant under the $G$-action. This method, which is inspired by the work of Vinberg, Kac, and others \cites{Kac80, Vinberg-Elashvili, GattiViniberghi} invites comparison to the work of Landsberg and Manivel \cite{LM02}, which constructed all complex simple Lie algebras. Our goal, rather than to classify algebras that arise from this construction, is to explore the construction of this adjoint operator and its Jordan decomposition and the consequences for tensors. In particular, the Jacobi identity is not necessary to compute the adjoint operators or their Jordan forms. We also note the work of Levy \cite{levy2014rationality}, which also studied the generalizations of Vinberg's method and associated Jordan decompositions \cite{GattiViniberghi}. 

We consider the adjoint operator of a tensor $T \in W$ in a particular embedding into an algebra $\fa$. One would like to take the Jordan decomposition of the adjoint operator and pull it back to a decomposition of the tensor, but this process might not be possible, especially since the algebra $\fa$ typically has much larger dimension than $W$. Hence when $\ad(T) = S + N$ as a sum of a semi-simple $S$ and nilpotent $N$,  one can't necessarily expect to be able to find preimages $s, n\in W$ such that $\ad(s) = S$ and $\ad(n) = N$ with $[s,n]=0$. We study this issue in the case of $\sl_6 \oplus \bw{3} \CC^6$ in Example~\ref{ex:g36}, and show that pure semi-simple elements do not exist in $\bw{3} \CC^6$, but we can construct pure semisimple elements that are not concentrated in a single grade of the algebra. To understand this fully would more work beyond the scope of the present article, but it could be a nice future direction to pursue.

Our aim is to set a possible stage where this operation could occur. We note that in the Vinberg cases where the algebra is a Lie algebra, this process actually does pull back to provide Jordan depositions of tensors for these special formats. We will see what happens when we relax the Lie requirement on the auxillary algebra, which has an advantage of being able to be defined for any tensor format, and the disadvantage of not being well-understood in the non-Lie regime. This article provides several initial steps in this direction.

\subsection{Linear algebra review}\label{sec:LA}
Let $V^*$ denote the vector space dual to $V$, that is the set of linear functionals $\{ V\to \FF\}$, which is also the dual $\SL(V)$-module to $V$ with the right action:
\[\begin{matrix}
\SL(V) \times V^* &\to& V^* \\
(g,\alpha) &\mapsto & \alpha g^{-1} 
.\end{matrix}
\]
Then the natural (linear) action of $\SL(V)$ on $V\otimes V^*$ is obtained by defining the action on simple elements and extending by linearity:
Simple elements $V\otimes V^*$ are of the form $v\otimes \a$, so the action is induced from
\[\begin{matrix}
\SL(V) \times V\otimes V^* &\to& V\otimes V^* \\
(g, v\otimes \alpha) &\mapsto & (gv)\otimes( \alpha g^{-1} )
.\end{matrix}
\]
Hence, the natural action of $\SL(V)$ on $\End(V)$ is by conjugation. Since the matrix $A = (a_{ij})$ is obtained from $T_A$ by expanding $T_A$ in the standard basis $\{e_1,\ldots, e_n\}$ of $V$ and dual basis $\{f_1,\ldots, f_n\}$ of $V^*$ via extracting the coefficients in the expression 
\[
T_A = \sum_{ij} a_{ij} e_i \otimes f_j
.\]
So for a vector $x = \sum_i x_i e_i\in V$, $T_A(x) = \sum_{ij} a_{ij} e_i \otimes f_j \sum_k x_ke_k = \sum_{ij,k} a_{ij} x_k e_i \otimes f_j ( e_k) = \sum_{ij} a_{ij} x_j e_i = \sum_i (\sum_{j} a_{ij} x_j) e_i $, i.e. the usual matrix-vector product $Ax$. 

The natural $\SL_n(\FF)$-action on matrices is also conjugation (since scalar multiplication commutes with matrix product and linear operators):
\[
g.T_A = \sum_{ij} a_{ij} g.(e_i \otimes f_j) = \sum_{ij} a_{ij} (ge_i) \otimes (f_j g^{-1}) 
.\]
Evaluating $g.T_A$ on a vector $x$ we see that $(g.T_A)(x) = g(T_A(g^{-1}x))$. Replacing $T_A$ and $g$ respectively with the matrices that represent them with respect to the standard basis, $A$ and $M$ respectively, and using associativity, we obtain $g.T_A(x) = MAM^{-1} x$. So, the standard matrix representative of the coordinate-changed transformation $g.T_A$ is $MAM^{-1}$.

If the field $\FF$ is algebraically closed, then the operator $T_A$ has generalizd eigen-pairs $(\lambda, v) \in \FF\times \FF^n$ such that for some $m\in \NN$ 
\[
(Av - \lambda v)^k = 0, \quad\text{but } (Av - \lambda v)^{k-1} \neq 0.
\]
The subspace of generalized eigenvectors associated to a fixed $\lambda$ is $G$-invariant, and a Jordan chain of linearly independent generalized eigenvectors provides a basis such that the operator is in almost-diagonal form, the Jordan canonical form referenced above.

\begin{remark}
Morozov's theorem \cite{MR0007750} is key to studying nilpotent orbits such as in \cite{Vinberg-Elashvili, Antonyan}. It says that every nilpotent element is part of an $\sl_2$-triple. In the matrix case, and in adapted coordinates these triples consist of a matrix with a single 1 above the diagonal (which may be taken as part of a Jordan block), its transpose, and their commutator, which is on the diagonal. This triple forms a 3-dimensional Lie algebra isomorphic to $\sl_2$. 
\end{remark}

\section{Historical Remarks on Jordan Decomposition for Tensors}\label{sec:history}
Concepts from linear algebra often have several distinct generalizations to the multi-linear or tensor setting. It seems natural that any generalization of Jordan decomposition to tensors should involve the concepts of eigenvalues and the conjugation of operators (simultaneous change of basis of source and target) by a group that respects the tensor structure. 
 In their seminal work that reintroduced hyperdeterminants (one generalization of the determinant to tensors), Gelfand, Kapranov, and Zelevinski \cite{GKZ} wondered what a spectral theory of tensors might look like. This was fleshed out for the first time simultaneously in the works \cite{Lim05_evectors, Qi05_eigen} (see also \cite{qi2018tensor}). One definition of an eigen-pair of a tensor $T \in (\CC^{n})^{\otimes d}$ is a vector $v \in \CC^n$ and a number $\lambda$ such that $T(v^{\otimes d-1}) = \lambda v$. Cartwright and Sturmfels computed the number of eigenvectors of a symmetric tensor \cite{CartwrightSturmfels2011} and Ottaviani gave further computations using Chern classes of vector bundles, see for example \cite{OedOtt13_Waring}. Additional considerations in this vein appeared in \cite{Gnang2011}, which gave a different product, and hence algebra structure, on tensor spaces.
While this concept of eigenvector has been fundamental, it doesn't seem immediately amenable to a Jordan decomposition because of the following features:
\begin{enumerate}
\item The operator $T \colon S^{d-1} V \to V$ is not usually square. 
\item Contracting $T \in V^{\otimes d}$ with the same vector $v\in V$ $d-1$ times has a symmetrization effect, which collapses the group that acts to $\SL(V)$ versus $\SL(V)^{\times d}$.
\item It's not immediately apparent that a conjugation group action on $T$ makes sense because of the (typically) non-square matrices coming from contractions or flattenings.
\end{enumerate}

See \cite{MR3841899} for a multilinear generalization of a Jordan decomposition, which considered 3rd-order tensors and focused on approximating a tensor with one that is block upper triangular. 

The generalization we will offer follows the following line of research. 
Any semisimple Lie algebra has a Jordan decomposition \cite{kostant1973convexity}, and Jordan decomposition is preserved under any representation \cite[Th.~9.20]{FultonHarris}. See also \cite{HuangKim} for recent work in this direction. The existence of such a Jordan decomposition has been the key to answering classical questions like orbit classification. 
Specifically, given a vector space $V$ with a connected linear algebraic group $G \subset \GL(V)$ acting on it, what are all the orbits? Which pairs $(G, V)$ have finitely many orbits \cite{Kac80, Kac85}, or if not, which are of \emph{tame} or \emph{wild} representation type? These questions have intrigued algebraists for a long time. Dynkin \cite{dynkin1960semisimple, dynkin2000selected} noted to call something a \emph{classification} of representations one must have a set of ``characteristics'' satisfying the following:
\begin{itemize}
\item The characteristics must be invariant under inner automorphisms so that the characteristics of equivalent representations must coincide.
\item They should be complete: If two representations have the same characteristics, they must be equivalent.
\item They should be compact and easy to compute. 
\end{itemize}

Indeed, a requirement for finitely many orbits is that the dimension of the group $G$ must be at least that of the vector space $V$. However, for tensors, this is rare \cite{venturelli2019prehomogeneous}. Yet, in some cases, Vinberg's method \cite{Vinberg75} can classify orbits even in the tame setting and, in fact, does so by embedding the question of classifying orbits for the pair $(G, V)$ to classifying nilpotent orbits in the adjoint representation $V'$ of an auxiliary group $G'$. Part of the classification comes down to classifying the subalgebras of the Lie algebra $\g'$, which was done by Dynkin \cite{dynkin1960semisimple, dynkin2000selected}. Another crucial part of this classification is the characteristics, which are computed utilizing Morozov $\sl_2$-triples in the auxiliary algebra $\fa$.
 One can follow these details in \cite{Vinberg-Elashvili}, for instance, in the case of $(\SL(9), \bw{3}\CC^9)$ whose orbit classification relied upon the connection to the adjoint representation of $\e_8$, or for the case of $(\SL(8), \bw{4}\CC^8)$ in \cite{Antonyan}. However, Vinberg and {\`E}la{\v{s}}vili, comment that while they did succeed in classifying the orbits for $(\SL(9), \bw{3}\CC^9)$, which involved 7 families of semisimple elements depending on 4 continuous parameters, and up to as many as 102 nilpotent parts for each, such a classification of orbits for $(\SL(n), \bw{3}\CC^n)$, ``if at all possible, is significantly more complicated.'' One reason for this is that all the orbits for case $n$ are nilpotent for case $n+1$ (being non-concise), hence, for $n\geq 10$ even the nilpotent orbits will depend on parameters. In addition, it is not clear what should come next after the sequence $E_6, E_7, E_8$. We offer an option for the next step by being agnostic to the algebra classification problem, and we just focus on putting tensor spaces into naturally occurring graded algebras, whose product (bracket) is compatible with the group action on the tensor space. 

Though we do not compute a Jordan decomposition many times in this article, we emphasize that all the features we study (like matrix block ranks and eigenvalues) are consequences of the existence of a Jordan decomposition compatible with the tensor structure. Unless otherwise noted, all the computations we report take under a few seconds on a 2020 desktop computer. We implemented these computations in a package in Macaylay2 \cite{M2} called \texttt{ExteriorExtensions} \cite{oeding2023exteriorextensions}. We include this package and example computations in the ancillary files of the arXiv version of this article.

\section{Constructing an algebra extending tensor space}
Now we will generalize a part of the Vinberg construction (discussed briefly in Section~\ref{sec:history}) embedding tensor spaces into a graded algebra, obtaining adjoint operators for tensors such that the group action on the tensor space is consistent with the Jordan decomposition of the operator.
This involves structure tensors of algebras (see \cite{bari2022structure, ye2018fast} for recent studies).

\subsection{A graded algebra extending a $G$-module}\label{sec:requirements}
Let $M$ denote a finite-dimensional $G$-module, with $\g$ the Lie algebra of $G$, considered an algebra over $\FF$. We wish to give the vector space $\fa = \g \oplus M$ the structure of an algebra that is compatible with the $G$-action. In order to have closure, we may need to extend $M$ to a larger $G$-module, i.e., we may also consider the vector space $\fa' = \g \oplus M \oplus M^*$ in Section~\ref{sec:Z3}, or more in Section~\ref{sec:Zm}.

We will attempt to define a bracket on $\fa$ with the following properties:
\begin{equation}
[\;,\;] \colon \fa \times \fa \to \fa
\end{equation}
\begin{enumerate}
\item The bracket is bi-linear and hence equivalent to a structure tensor $B \in \fa^*\otimes \fa^* \otimes \fa$. 
\item The bracket is \emph{interesting}, i.e., the structure tensor is non-zero.
\item The bracket respects the grading, i.e., $[\;,\;] \colon \fa_i \times \fa_j \to \fa_{i+j}$. 
\item The bracket agrees with the $\g$-action on $\g$, and on $M$. This ensures that the Jordan decomposition respects the $G$-action on $M$.
\item\label{prop:equi} The structure tensor $B$ is $G$-invariant, so that the $G$-action on elements of $\fa$ is conjugation for adjoint operators, and hence Jordan decomposition makes sense. 
\setcounter{nameOfYourChoice}{\value{enumi}}
\end{enumerate} 
Additionally, we could ask for the following properties that would make $\fa$ into a Lie algebra. 
\begin{enumerate}
\setcounter{enumi}{\value{nameOfYourChoice}}
\item The bracket is globally skew-commuting, i.e., $[T,S] = -[S,T]$ for all $S,T \in \fa$. Note it must be skew-commuting for the products $\g \times \g \to \g$ and $\g \times M \to M$ if it is to respect the grading and the $G$-action on $\g$ and on $M$. 
\item The bracket satisfies the Jacobi criterion, making $\fa$ a Lie algebra.
\end{enumerate}
We will see that these last 2 criteria may not always be possible to impose. We may study potential connections to Lie superalgebras in future work. However, the conditions (1)-(5) are enough to define the following.

\begin{definition}
Given an element $T$ in an algebra $\fa$, we associate its \defi{adjoint form} \[\ad_T :=[T,\;] \colon \fa \to \fa.\]
\end{definition}

\begin{prop}
Suppose $\fa$ is a $G$-module. Then the structure tensor $B$ of an algebra $\fa$ is $G$-invariant if and only if the operation $T \mapsto \ad_T$ is $G$-equivariant in the sense that 
\begin{equation}
\ad_{gT} = g(\ad_T)g^{-1},
\end{equation}
with $gT$ denoting the $G$-action on $\fa$ on the LHS and juxtaposition standing for the matrix product on the RHS.
In particular, the Jordan form of $\ad_T$ is a $G$-invariant for $T\in \fa$. 
\end{prop}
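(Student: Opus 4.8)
The plan is to prove the two directions of the equivalence and then deduce the Jordan-invariance as a corollary. First I would translate the statement that $B$ is $G$-invariant into the language of the bracket: since $B \in \fa^* \otimes \fa^* \otimes \fa$ encodes the bilinear map $(S,T) \mapsto [S,T]$, $G$-invariance of $B$ means precisely that $g\cdot B = B$ for all $g \in G$, which unwinds to the identity $g[S,T] = [gS, gT]$ for all $S, T \in \fa$ and all $g \in G$ (here the action on the first two tensor factors is the dual action and on the last is the given action on $\fa$). This is the standard correspondence between invariant tensors and equivariant multilinear maps, so I would state it and only briefly justify it by expanding the action on a simple tensor.

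Next, for the forward direction, assume $g[S,T] = [gS, gT]$. Fix $T$ and apply this with $S$ replaced by $g^{-1}S$: then $g[g^{-1}S, T] = [S, gT]$, i.e. $g\bigl(\ad_T(g^{-1}S)\bigr) = \ad_{gT}(S)$. Since this holds for every $S \in \fa$, we get the operator identity $\ad_{gT} = g \circ \ad_T \circ g^{-1}$, which is exactly $\ad_{gT} = g(\ad_T)g^{-1}$ once we identify the $G$-action on $\fa$ with its matrix form in a chosen basis. For the converse, run the same computation backwards: if $\ad_{gT} = g(\ad_T)g^{-1}$ as operators on $\fa$, then evaluating both sides on an arbitrary $gS$ gives $[gT, gS] = g[T,S]$ for all $S,T$, which by bilinearity says $g\cdot B = B$, i.e. $B$ is $G$-invariant. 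The two arguments are really the same chain of equalities read in opposite directions, so I would present them together compactly.

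Finally, for the Jordan-invariance claim: the operators $\ad_T$ and $\ad_{gT}$ are conjugate via the invertible linear map $g$ (acting on the finite-dimensional space $\fa$ over the algebraically closed field $\FF$), by the equivariance just established. Conjugate operators have the same Jordan canonical form — this is exactly the orbit-classification content of the Jordan Canonical Form theorem quoted in the introduction, applied to $\End(\fa)$ with the conjugation action of $\GL(\fa)$. Hence the Jordan form of $\ad_T$ depends only on the $G$-orbit of $T$, i.e. it is a $G$-invariant.

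I do not expect a serious obstacle here; the only point requiring a little care is bookkeeping the dual action on the $\fa^* \otimes \fa^*$ factors of $B$ and checking that ``$g\cdot B = B$'' really is equivalent to $g[S,T]=[gS,gT]$ rather than to some variant with $g^{-1}$'s misplaced — this is handled by the linear-algebra review in Section~\ref{sec:LA}, so I would simply invoke that computation. A second minor point is to note that $g$ is invertible on $\fa$ (it lies in $\GL(\fa)$ via the module structure), which is what makes the conjugation relation meaningful and the Jordan forms literally equal rather than merely related.
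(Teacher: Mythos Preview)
Your proposal is correct and follows essentially the same approach as the paper: both arguments unwind the meaning of $G$-invariance of $B$ into an equivariance statement for the bracket/contraction, then read off the conjugation relation $\ad_{gT}=g(\ad_T)g^{-1}$ and its converse, with the Jordan-form invariance following immediately from conjugacy. One small bookkeeping slip: since $\ad_T(S)=[T,S]$ (not $[S,T]$), your line ``$g[g^{-1}S,T]=[S,gT]$, i.e.\ $g(\ad_T(g^{-1}S))=\ad_{gT}(S)$'' has the bracket arguments transposed; the fix is trivial (swap the roles of $S$ and $T$ in the invariance identity before substituting), and the paper avoids this by working directly with contractions of simple tensors $\alpha\otimes\beta\otimes a$ rather than the bracket notation.
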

\begin{proof}
Let $B = \sum_{i,j,k} B_{i,j,k} \a_i \otimes \a_j \otimes a_k \in \fa^* \otimes \fa^* \otimes \fa$ represent a potential bracket $[\;,\;]$.
For $T\in \fa$ we have that $\ad_T = B(T) \in \FF\otimes \fa^* \otimes \fa$ is the contraction in the first factor. 
Since the $G$-action is a linear action, it suffices to work with on rank-one tensors such as $\a \otimes \b \otimes a$, for which the contraction with $T$ is $\a(T)\cdot \b\otimes a$, and $\cdot$ denotes the scalar product. For $g\in G$ the $G$-action on the contraction is 
\[
g.(\a(T)\b\otimes a )= \a(T)\cdot(g.\b)\otimes (g.a),
\]
because $G$ acts as the identity on $\FF$.
Extending this by linearity and noting that $g.\b(v) = \b(g^{-1}v)$ (the dual action) we have that 
\begin{equation}\label{g.bt1}
g.(B(T)) = g(B(T))g^{-1},
\end{equation}
 where no dot (juxtaposition) means matrix product.
Then we compute:
\[\begin{matrix}
g.(\a\otimes \b \otimes a) = (g.\a)\otimes (g.\b) \otimes (g.a), \text{ and} \\[1ex]
(g.(\a\otimes \b \otimes a))(T) = (g.\a)(T) \cdot (g.\b)\otimes (g.a) = \a(g^{-1} T) \cdot (g.\b)\otimes (g.a).
\end{matrix}\]
This implies that 
\begin{equation}\label{g.bt2}
(g.B)(T) = g.(B(g^{-1}T)) 
= g(B(g^{-1}T))g^{-1},
\end{equation}
where the second equality is by \eqref{g.bt1}.

If we assume that $g.B = B$ we can conclude from \eqref{g.bt2} that
\[B(T) = g(B(g^{-1} T))g^{-1},\]
or replacing $T$ with $gT$ 
\[B(gT) = g(B( T))g^{-1},\]
Hence, the construction of the adjoint operators is $G$-equivariant. 
This argument is also reversible since \eqref{g.bt2} holds for any tensor $B$, and if
\[B(T) = g(B(g^{-1}T))g^{-1}\]
holds for all $T$, then $B(T) = (g.B)(T)$ for all $T$, which implies that $B = g.B$. 
\end{proof}

\begin{definition}\label{def:GJD}
We will say that a graded algebra $\mathfrak{a} = \g \oplus W$ has a Jordan decomposition consistent with the $G = \text{Lie}(\g)$-action (or say $\mathfrak{a}$ has GJD for short) if its structure tensor is $G$-invariant (and non-trivial). An element $T\in \fa$ is called \defi{ad-nilpotent} or simply \defi{nilpotent}, respecively \defi{ad-semisimple} or \defi{semisimple}, if $\ad_T$ is nilpotent (resp. semisimple).
\end{definition}

After we posted a draft of this article to the arXiv, Mamuka Jibladze asked us about the commuting condition for the Jordan-Chevalley decomposition, which led us to the following.
\begin{remark} If $\fa$ has GJD, and $T\in W$, considering $\ad_T \in \End(\fa)$, and its Jordan decomposition $\ad_T = (\ad_T)_S + (\ad_T)_N$ for semisimple $(\ad_T)_S $ and nilpotent $ (\ad_T)_N$ with $[ (\ad_T)_S , (\ad_T)_N] =0$. 
We ask if we can find corresponding $s,n \in \fa$ so that $\ad_s = (\ad_T)_S $ and $\ad_n = (\ad_T)_N $, and we don't require that $s,n$ commute in $\fa$, but rather that their corresponding adjoint operators do. Notice that if we did have $[\ad_s,\ad_n] = 0 $ in $\End(\fa)$ and $[s,n]= 0$ in $\fa$, then this would mean that the elements $s,n$ would have to satisfy the Jacobi identity $\ad_{[s,n]} = [\ad_s, \ad_n]$, which may not hold for all elements of $\fa$. 

The question of the existence of such $s,n$ regards the adjoint map itself $\ad\colon \fa \to \End (\fa)$, and we can try to solve for them by solving a system of equations on the image of the adjoint map. We report on this and other related computations in example~\ref{ex:g36}.
\end{remark}

\subsection{Invariants from adjoint operators}
Since conjugation preserves eigenvalues, we have that elementary symmetric functions of the eigenvalues, including the trace and determinant, and hence also the characteristic polynomial of the adjoint form of a tensor are all invariant under the action of $G$. Hence, $x$ and $y$ are not in the same orbit if they do not have the same values for each invariant. We list three such invariants for later reference.

\begin{definition}For generic $T \in \g$ the adjoint operator $\ad_T$ induces \defi{trace-power} invariants:
\[
f_k(T) := \tr((\ad_T)^k).
\]
\end{definition}
The ring $\mathcal F := \CC[f_1,\ldots,f_n]$ is a subring of the invariant ring $\CC[V]^G$, though $\F$ is not likely to be freely generated by the $f_k$, and could be a proper subring, see also \cite{wallach2005hilbert}. This opens a slew of interesting commutative algebra problems such as computing the syzygies of the $f_k$'s, or even the entire minimal free resolution (over $\CC[V]$) of such; obtaining a minimal set of basic invariants; finding expressions of other known invariants in terms of the $f_{k}$, for example, one could ask for an expression of the hyperdeterminant, as was done in \cite{BremnerHuOeding, HolweckOedingE8}. 

Invariance also holds for the set of algebraic multiplicities of the roots of the adjoint form of the tensor, as well as the ranks of the adjoint form and its blocks induced from the natural grading. This leads us to the following invariants that are easy to compute and often sufficient to distinguish orbits.
\begin{definition}\label{def:profiles}
Suppose $T\in M$, $\fa$ is an algebra containing $\g \oplus M$, and $\ad_T$ is the adjoint operator of $T$. 
We call the \defi{adjoint root profile}, which lists the roots (with multiplicities) of the characteristic polynomial of $\ad_{T}$. 
\end{definition}
\begin{definition}
We list the ranks of the blocks of $\ad_{T}$ and its powers and call this the \defi{adjoint rank profile}. We depict the rank profile by a table whose rows correspond to the power $k$ on $(\ad_T)^k$ and whose columns are labeled by the blocks of $(\ad_T)^k$, with the last column corresponding to the total rank.
\end{definition}
The ranks of powers of $\ad_T$ indicate the dimensions of the generalized eigenspaces for the 0-eigenvalue, and this can be computed without knowing the rest of the eigenvalues of $\ad_T$. In particular, if these ranks are not constant, $\ad_T$ is not semi-simple, and if the rank sequence does not go to 0 then $\ad_T$ is not nilpotent.

Recall that the \defi{null cone} $\mathcal{N}_G$ is the common zero locus of the invariants, i.e., the generators of $\CC[V]^G$. We have the following straightforward conclusion:
\begin{prop}
Suppose $\fa$ has a Jordan decomposition consistent with the $G$-action on $M$. If a tensor $T$ is in the null cone $\mathcal{N}_G$, then $\ad_T$ is a nilpotent operator. Moreover, if the trace powers $f_k$ generate the invariant ring $\CC[V]^G$, then null-cone membership and nilpotency are equivalent. 
\end{prop}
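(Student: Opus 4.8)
The plan is to chain together the invariance properties already established. The first claim is that $T \in \mathcal{N}_G$ implies $\ad_T$ is nilpotent. Recall from the preceding discussion that each trace-power function $f_k(T) = \tr((\ad_T)^k)$ lies in the invariant ring $\CC[V]^G$, since conjugation preserves the characteristic polynomial of $\ad_T$ (by the Proposition identifying $G$-invariance of $B$ with $G$-equivariance of $T \mapsto \ad_T$). Therefore, if $T$ is in the null cone, every invariant vanishes on $T$; in particular $f_k(T) = 0$ for all $k = 1, \dots, N$, where $N = \dim \fa$. First I would invoke the standard fact (Newton's identities, valid since $\CC$ has characteristic $0$) that the vanishing of the power sums $p_k = \sum_i \lambda_i^k$ of the eigenvalues $\lambda_1, \dots, \lambda_N$ of $\ad_T$ for $k = 1, \dots, N$ forces all elementary symmetric functions $e_1, \dots, e_N$ to vanish, hence the characteristic polynomial of $\ad_T$ is $t^N$. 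By the characterization of nilpotency recalled in Section~\ref{sec:LA} (an operator is nilpotent iff its characteristic polynomial is $t^N$), $\ad_T$ is nilpotent, i.e. $T$ is ad-nilpotent.

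For the second claim, assume in addition that the $f_k$ generate $\CC[V]^G$. Null-cone membership of $T$ now means exactly that $f_k(T) = 0$ for all $k$, which by the argument above is equivalent to $\ad_T$ having vanishing characteristic polynomial coefficients, i.e. to $\ad_T$ being nilpotent. Conversely, if $\ad_T$ is nilpotent then all its eigenvalues are $0$, so $f_k(T) = \tr((\ad_T)^k) = 0$ for every $k$, and since these generate the invariant ring, $T$ lies in the common zero locus $\mathcal{N}_G$. This gives the asserted equivalence.

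I do not expect a serious obstacle here; the content is bookkeeping with symmetric functions plus citing the two facts already in the paper (the equivariance Proposition and the nilpotency criterion). The one point that deserves a sentence of care is the range of $k$: one must note that the power sums $p_1, \dots, p_N$ already determine $p_k$ for all larger $k$ (via the characteristic polynomial relation / Cayley–Hamilton), so it genuinely suffices that the finitely many $f_1, \dots, f_N$ vanish — there is no need to consider $f_k$ for $k > N$ separately, and the characteristic-zero hypothesis is what makes Newton's identities invertible. I would also remark that the first statement uses only that $\F \subseteq \CC[V]^G$, whereas the equivalence needs the (strong and not always true) hypothesis $\F = \CC[V]^G$, consistent with the caveat raised earlier that $\F$ may be a proper subring.
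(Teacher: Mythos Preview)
Your proof is correct and follows essentially the same approach as the paper's. The only cosmetic difference is that the paper appeals directly to the elementary symmetric functions of the eigenvalues of $\ad_T$ (the coefficients of the characteristic polynomial) as $G$-invariants vanishing on the null cone, whereas you route through the trace powers $f_k$ first and then invoke Newton's identities to reach the same conclusion; the converse direction is identical in both.
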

\begin{proof}
If $T\in \mathcal{N}_G$, then every invariant vanishes, in particular all elementary symmetric polynomials in the eigenvalues of $\ad_T$, hence all the eigenvalues of $\ad_T$ are zero, so null-cone membership always implies nilpotency. 
Conversely, if $\ad_T$ is nilpotent, then all the eigenvalues of $\ad_T$ are zero, and since the trace powers are symmetric functions in the eigenvalues of $\ad_T$, they all vanish. If these trace powers generate the invariant ring, then nilpotent tensors are in the null cone.
\end{proof}

\subsection{Algebra structures on $\End(V)_0$}
The vector space of traceless endomorphisms, denoted $\g = \End(V)_0$ (or $\sl_n$ or $\sl(V)$ when we imply the Lie algebra structure), can be given more than one $G$-invariant algebra structure, as we will now show.
We attempt to define a $G$-equivariant bracket 
\[
\g \times \g \to \g.
\]
The following result implies that up to re-scaling, there are two canonical $G$-equivariant product structures on $\g$, one commuting and one skew-commuting.

\begin{prop} Let $\g = \End(V)_0$ denote the vector space of traceless endomorphisms of a finite-dimensional vector space $V$. Then $\g^* \otimes \g^* \otimes \g$ contains 2 copies of the trivial representation, and more specifically, each of $\bw{2} \g^* \otimes \g$ and $S^2 \g^* \otimes \g$ contains a 1-dimensional space of invariants. 
\end{prop}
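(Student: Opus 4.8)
The plan is to reduce the statement to three multiplicity computations for the irreducible $G = \SL(V)$-module $\g = \sl(V)$, establish the necessary lower bounds with explicit products, and obtain the matching upper bound from the first fundamental theorem of invariant theory. Throughout I take $n = \dim V \ge 3$; when $n = 2$ the symmetrized product built below degenerates, $S^2\g$ contains no copy of $\g$, and the statement must be read with this restriction.

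First I would use the nondegenerate invariant trace form $\langle X,Y\rangle = \tr(XY)$ on $\g$ to identify $\g \isom \g^*$ as $G$-modules, so that $\g^*\o\g^*\o\g \isom \g^{\o 3}$ and, in characteristic zero, $(\g^*\o\g^*\o\g)^G = (\bw{2}\g^*\o\g)^G \oplus (S^2\g^*\o\g)^G$. Since $\g$ is irreducible, $\dim(\bw{2}\g^*\o\g)^G = \dim\Hom_G(\bw{2}\g,\g)$ equals the multiplicity of the adjoint module in $\bw{2}\g$, and likewise $\dim(S^2\g^*\o\g)^G$ is the multiplicity of $\g$ in $S^2\g$. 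So the whole proposition follows once I show that each of these two multiplicities equals $1$; equivalently, that $\dim(\g^{\o 3})^G = 2$ with the alternating and symmetric parts each contributing one invariant.

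For the lower bounds I would simply display the two maps. The commutator $[X,Y] = XY - YX$ is a nonzero $G$-equivariant alternating bilinear map $\g\times\g\to\g$ (it is traceless and conjugation-equivariant, and $[E_{12},E_{21}]\neq 0$), so the multiplicity of $\g$ in $\bw{2}\g$ is at least $1$. The traceless anticommutator $X\circ Y := XY+YX-\tfrac{2}{n}\tr(XY)\,\Id$ is a nonzero $G$-equivariant symmetric bilinear map $\g\times\g\to\g$: tracelessness and $G$-equivariance are immediate, and taking $X=Y=E_{11}-\tfrac1n\Id$ gives $X\circ X = \tfrac{2(n-2)}{n}X \neq 0$ precisely because $n\ge 3$. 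Hence the multiplicity of $\g$ in $S^2\g$ is also at least $1$, giving $\dim(\g^{\o 3})^G \ge 2$.

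The heart of the argument, and the step I expect to need the most care, is the matching upper bound $\dim(\g^{\o 3})^G \le 2$. I would compare $\g$ with $\gl(V)=\End(V)=\g\oplus\FF\,\Id$. Expanding the tensor cube and using $\FF\o N\isom N$ gives $\gl(V)^{\o 3}\isom\g^{\o 3}\oplus 3\,\g^{\o 2}\oplus 3\,\g\oplus\FF$ as $G$-modules; taking invariants and using $\g^G=0$, $(\g^{\o 2})^G\isom\Hom_G(\g,\g)=\FF$ (Schur's lemma, since $\g$ is irreducible and self-dual), and $\FF^G=\FF$ yields $\dim(\gl(V)^{\o 3})^G = \dim(\g^{\o 3})^G + 4$. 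On the other hand $\gl(V)=V\o V^*$, so $\gl(V)^{\o 3}\isom V^{\o 3}\o(V^*)^{\o 3}$, and the first fundamental theorem of invariant theory identifies the $\GL(V)$-invariants of this space with the span of the $3! = 6$ complete contractions pairing each $V$-factor with a distinct $V^*$-factor, which are linearly independent since $n\ge 3$. Passing from $\GL(V)$ to $\SL(V)$ adds nothing new: a determinant-type invariant would require $n$ copies of $V$ (and $n$ of $V^*$), impossible for $n>3$, while for $n=3$ the one candidate $\epsilon\o\epsilon^*\in\bw{3}V\o\bw{3}V^*$ is already a signed sum of the six contractions via the classical expansion of $\det(\langle v_i,\alpha_j\rangle)_{i,j=1}^{3}$. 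Thus $\dim(\gl(V)^{\o 3})^G = 6$, so $\dim(\g^{\o 3})^G = 2$. Combined with the lower bounds, this forces the alternating and symmetric parts to contribute exactly one invariant each, proving that $\bw{2}\g^*\o\g$ and $S^2\g^*\o\g$ each contain a one-dimensional space of invariants and that $\g^*\o\g^*\o\g$ contains exactly two copies of the trivial representation.
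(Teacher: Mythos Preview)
Your proof is correct and takes a genuinely different route from the paper. The paper argues via Schur--functor calculus: it writes $S^2(V\otimes V^*) = (S^2V\otimes S^2V^*)\oplus(\bw{2}V\otimes\bw{2}V^*)$, applies the Pieri rule to each piece, locates two copies of $\g = S_{2,1^{n-2}}V$ in the result, and then observes that exactly one of them must land in $S^2\g$ because the complement $\g\otimes\CC\oplus S^2\CC$ inside $S^2(\g\oplus\CC)$ can absorb only one; the skew case is declared analogous. You instead exhibit the commutator and the traceless Jordan product $X\circ Y = XY+YX-\tfrac{2}{n}\tr(XY)\Id$ as explicit nonzero equivariant maps to get the lower bounds, and obtain the matching upper bound by embedding $\g^{\otimes 3}$ into $\gl(V)^{\otimes 3}=\End(V^{\otimes 3})$ and counting $\SL(V)$-invariants there via the first fundamental theorem (equivalently Schur--Weyl duality), which gives $\dim(\g^{\otimes 3})^G = 6-4 = 2$. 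The paper's approach meshes with the Pieri-rule computations used throughout the article and yields the full irreducible decomposition of $S^2\g$ as a byproduct; your approach is more self-contained, makes the two canonical products and the restriction $n\ge 3$ completely explicit, and sidesteps partition combinatorics at the cost of invoking FFT.
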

\begin{proof}
Since $\g$ is an irreducible $G$-module, we only need to show that there is an isomorphic copy of $\g^*$ in each of $\bw{2} \g^*$ and $S^2\g^*$, then each of $\bw{2} \g^*\otimes \g$ and $S^2\g^*\otimes \g$ will have a non-trivial space of invariants. This is just a character computation, but we can also see it as an application of the Pieri rule and the algebra of Schur functors. We do the case of $S^2 \g^*$ since the other case is quite similar, and we already know that $\End(V)_0$ has the structure of the Lie algebra $\sl (V)$ with a skew-commuting product.

Recall that $V \otimes V^* = \End(V)_0 \oplus \CC$, where the trivial factor is the trace, and as a Schur module $\End(V)_0 = S_{2,1^n-2}V$. 
Also $S^2(A\oplus B) = S^2A \oplus (A\otimes B) \oplus S^2 B $ so we can compute $S^2 \g$ by computing $S^2 (V\otimes V^*)$ and taking a quotient.
We have
\[
S^2(V\otimes V^*) = (S^2 V \otimes S^2 V^*) \oplus (\bw2 V \otimes \bw 2 V^*) 
.\]
Now apply the Pieri rule (let exponents denote repetition in the Schur functors)
\[
= (S_{2^n}V \oplus \underline{S_{3,2^{n-2},1}V} \oplus S_{4,2^{n-2}} V)
\oplus (\bw n V \oplus \underline{S_{2,1^{n-2}} V }\oplus S_{2,2, 1^{n-3}}V )
,\]
where we have underlined the copies of $\g$. Since $S^2(V\otimes V^*)$ contains 2 copies of $\g $, and only one copy can occur in the complement of $S^2 \sl(V)$ (which is $\g \otimes \CC \oplus S^2 \CC$), we conclude that there must be precisely one copy of $\g$ in $S^2 \g$. 
\end{proof}
\begin{remark}
Note the traceless commuting product is defined via:
\[\begin{matrix}
\g \times \g &\to& \g
\\
(A,B ) & \mapsto & (AB+ BA) - I\cdot \tr(AB+ BA).
\end{matrix}
\]
Then we know that the trace is $G$-invariant, the $G$ action is linear and moreover 
\[g(AB+BA)g^{-1} = (gAg^{-1})(gBg^{-1})+(gBg^{-1})(gAg^{-1}),\]
 so $g.[A, B] = [g.A, g.B]$, i.e., the product is $G$-equivariant.
\end{remark}

\begin{remark}
Since both $\bw{2} \sl_n \otimes \sl_n$ and $S^2 \sl_n \otimes \sl_n$ have a space of non-trivial invariants, we could put a commuting or skew-commuting product on $\sl_n$ and yield different algebra structures on $\fa$. However, if we want the product to agree with the action of $\sl_n$ on itself and with the action of $\sl_n$ on $M$ (and hence obtain a Jordan decomposition), then we should insist that we choose the bracket that is skew-commuting on $\sl_n$.
This structure is inherited from viewing $\g$ as the adjoint representation of $G$.
\end{remark}

\subsection{A $\ZZ_2$ graded algebra from a $G$-module.}\label{sec:Z2}
For a $G$-module $M$ we define $\fa = \g \oplus M$ and attempt to construct a bracket 
\[
[\;,\;] \colon \fa \times \fa \to \fa,
\]
 viewed as an element of a tensor product $B\in \fa^* \otimes \fa^* \otimes \fa$, with the requirements in Section~\ref{sec:requirements}.

For the bracket on $\fa$ to respect the $\ZZ_2$ grading $\fa_0 = \g$, $\fa_1 = M$, it must impose conditions that respect the following decomposition.
\[\fa^* \otimes \fa^* \otimes \fa = (\fa_0^* \oplus \fa_1^*) \otimes (\fa_0^* \oplus \fa_1^*) \otimes (\fa_0 \oplus \fa_1) \]
\[\begin{matrix}
 &=
 & \fa_0^* \otimes \fa_0^* \otimes \fa_0 &\oplus & \fa_0^* \otimes \fa_0^* \otimes \fa_1 
& \oplus& \fa_0^* \otimes \fa_1^* \otimes \fa_0 &\oplus & \fa_0^* \otimes \fa_1^* \otimes \fa_1 \\
&& \oplus\; \fa_1^* \otimes \fa_0^* \otimes \fa_0 &\oplus & \fa_1^* \otimes \fa_0^* \otimes \fa_1 
&\oplus& \fa_1^* \otimes \fa_1^* \otimes \fa_0 &\oplus & \fa_1^* \otimes \fa_1^* \otimes \fa_1 
\end{matrix}
\]
Correspondingly denote by $B_{ijk}$ the graded pieces of $B$, i.e., $B_{ijk}$ is the restriction of $B$ to $\fa_i^* \otimes \fa_j^* \otimes \fa_k$. 
Respecting the grading requires that the maps $B_{001} =0$, $B_{010} =0$, $B_{100} =0$, and $B_{111} =0$. 
So $B$ must have the following structure:
\[
B \in 
\begin{matrix}
\fa_0^* \otimes \fa_0^* \otimes \fa_0 
 &\oplus & \fa_0^*\otimes \fa_1^* \otimes \fa_1 &\oplus & \fa_1^*\otimes \fa_0^* \otimes \fa_1 
& \oplus& \fa_1^* \otimes \fa_1^* \otimes \fa_0 
,\end{matrix}
\]
 For $X\in \g$ write $B(X) = \ad_X$, likewise, for $T\in M$ write $B(T) = \ad_T$, and correspondingly with the graded pieces of each.
So, the adjoint operators have formats:
\begin{equation}\label{eq:block2}
B(X) = \begin{pmatrix}
B_{000}(X) & 0 \\ 
0 & B_{011}(X)
\end{pmatrix}, 
\quad \quad \text {and}\quad\quad B(T) = \begin{pmatrix}
0 & B_{110}(T) \\ 
 B_{101}(T) & 0
\end{pmatrix}
,\end{equation}
where we note that each of the blocks is a map:
\[\begin{matrix}
B_{000}(X) \colon \fa_0 \to \fa_0, & \quad &
B_{011}(X) \colon \fa_1 \to \fa_1, \\
B_{110}(T) \colon \fa_1 \to \fa_0, & \quad &
B_{101}(T)\colon \fa_0 \to \fa_1, 
\end{matrix}
\]
that depends linearly on its argument, $X$ or $T$. The linearity of the construction is apparent so that if $X\in \g, T\in M$, then 
\[
B(X+T) = B(X) + B(T),
\]
respecting the matrix decompositions at \eqref{eq:block2}.

Agreement with the $\g$-action would require that $B_{000}$ be the usual commutator on $\g$ and that $B_{011}$ should be the standard $\g$-action on $M$, which is not an obstruction.
The remaining requirement is a $G$-invariant in $\fa_1^*\otimes \fa_1^* \otimes \fa_0$ (respectively in $\fa_0^*\otimes \fa_1^* \otimes \fa_1$), which will allow for an invariant $B_{110}$ (respectively $B_{101}$). Formally:
\begin{prop}
The vector space $\fa = \g \oplus M = \fa_0 \oplus \fa_1$ has a $G$-invariant structure tensor, and hence elements of the corresponding graded algebra have a non-trivial Jordan decomposition that is consistent with the $G$-action on $T\in M$ if and only if the spaces of $G$-invariants in $\fa_1^*\otimes \fa_1^* \otimes \fa_0$ and in $\fa_0^*\otimes \fa_1^* \otimes \fa_1$ are non-trivial.
\end{prop}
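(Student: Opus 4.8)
The plan is to combine the $\ZZ_2$-grading analysis carried out just before the statement with the equivariance criterion proved earlier — that the structure tensor $B$ is $G$-invariant if and only if $T\mapsto\ad_T$ is $G$-equivariant, in which case the Jordan form of $\ad_T$ is a $G$-invariant. So the first step is to isolate which blocks of $B$ are actually at issue. Respecting the grading forces $B=B_{000}+B_{011}+B_{101}+B_{110}$, with the four components lying in $\fa_0^*\otimes\fa_0^*\otimes\fa_0$, $\fa_0^*\otimes\fa_1^*\otimes\fa_1$, $\fa_1^*\otimes\fa_0^*\otimes\fa_1$, and $\fa_1^*\otimes\fa_1^*\otimes\fa_0$ respectively. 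Since $G$ preserves the grading, this is a decomposition into $G$-submodules, so $B$ is $G$-invariant if and only if each block is. Requirement~(4) pins $B_{000}$ down to be the commutator of $\g$ (the bracket it carries as the adjoint representation) and $B_{011}$ to be the structure map of the $\g$-action on $M$; both are $G$-equivariant by construction, hence automatically $G$-invariant and non-zero, so the structure tensor is always non-trivial and the only freedom — and the only obstruction — lies in $B_{110}\colon\fa_1\otimes\fa_1\to\fa_0$ and $B_{101}\colon\fa_1\otimes\fa_0\to\fa_1$, and, through the block form~\eqref{eq:block2}, in the adjoint operator of $T\in M$.

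For the ``if'' direction, assuming both $(\fa_1^*\otimes\fa_1^*\otimes\fa_0)^G$ and $(\fa_0^*\otimes\fa_1^*\otimes\fa_1)^G$ are non-trivial, I would pick a non-zero $G$-invariant for $B_{110}$ directly, and a non-zero $G$-invariant for $B_{101}$ using the $G$-module isomorphism $\fa_1^*\otimes\fa_0^*\otimes\fa_1\cong\fa_0^*\otimes\fa_1^*\otimes\fa_1$ (transpose the two covariant slots); together with the forced $B_{000},B_{011}$ these assemble into a $G$-invariant, grading-respecting, $\g$-action-compatible $B$, and the earlier Proposition then gives that the Jordan form of $\ad_T$ is a $G$-invariant. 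Finally \eqref{eq:block2} shows $\ad_T\neq 0$ for generic $T\in M$, so the decomposition is genuinely non-trivial. For the ``only if'' direction: given such a $B$, invariance of each block is forced as above; a non-trivial Jordan decomposition on $M$ means $\ad_T\neq 0$ for some $T$, hence $B_{110}\neq 0$ or $B_{101}\neq 0$, so one of the two invariant spaces is non-trivial, and the other is always non-trivial since $(\fa_0^*\otimes\fa_1^*\otimes\fa_1)^G$ contains the $\g$-action map.

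The main point needing care — and the one place I expect actual work rather than bookkeeping — is the precise meaning of ``non-trivial Jordan decomposition''. If it is read as ``$\ad_T\neq 0$ for generic $T$'', the argument above is complete. If one wants the stronger statement that $\ad_T$ is not \emph{forced} to be nilpotent, i.e.\ that $(\ad_T)^2=B_{110}(T)B_{101}(T)\oplus B_{101}(T)B_{110}(T)$ is not identically zero in $T$, then I would need a short genericity lemma: for a suitable choice of the invariant $B_{110}$, the composite $B_{110}(T)B_{101}(T)\colon\fa_0\to\fa_0$ does not vanish identically, i.e.\ $\im B_{101}(T)$ and $\ker B_{110}(T)$ are not complementary for all $T$. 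Everything else reduces to the already-established $G$-module structure of $\fa^*\otimes\fa^*\otimes\fa$ and the equivariance criterion.
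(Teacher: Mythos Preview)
Your proposal is correct and follows the same line as the paper --- in fact the paper gives no separate proof of this proposition; it is stated after the word ``Formally:'' as a summary of the block analysis immediately preceding it, and your argument simply expands that same analysis with more care.

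Two refinements you make that the paper glosses over are worth noting. First, you correctly observe that $(\fa_0^*\otimes\fa_1^*\otimes\fa_1)^G$ is automatically non-trivial, since it already contains the $\g$-action map $B_{011}$; so the second condition in the proposition is vacuous and the only genuine obstruction is the existence of a $G$-invariant in $\fa_1^*\otimes\fa_1^*\otimes\fa_0$. The paper's phrasing treats both as ``remaining requirements'', which is slightly misleading. Second, your closing caveat about the precise meaning of ``non-trivial Jordan decomposition'' is more scrupulous than the paper, whose Definition of GJD takes it simply to mean that the structure tensor is $G$-invariant and non-zero; under that reading your first interpretation ($\ad_T\neq 0$ for generic $T$) is the intended one, and the extra genericity lemma you sketch is not needed.
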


Skew-symmetry would force the elements $B_{000}$ and $B_{110}$ to be skew-symmetric in their first two arguments, and $B_{101} = -B_{011}^\top$. On the level of modules, this is
\[
B \in 
\begin{matrix}
 \bw{2}\fa_0^* \otimes \fa_0 
 &\oplus & \fa_0^*\wedge \fa_1^* \otimes \fa_1 
& \oplus& \bw{2} \fa_1^* \otimes \fa_0 
,\end{matrix}
\]
where we have encoded the condition that $B_{101} = -B_{011}^\top$ by replacing $ \left(\fa_0^* \otimes \fa_1^* \otimes \fa_1 \right) 
 \oplus \left( \fa_1^* \otimes \fa_0^* \otimes \fa_1 \right) $ with $ \fa_0^* \wedge \fa_1^* \otimes \fa_1 $.
We record this condition as follows:
\begin{prop}
The algebra $\fa = \g \oplus M = \fa_0 \oplus \fa_1$ has a skew-commuting product with a non-trivial Jordan decomposition that is consistent with the $G$-action on $T\in M$ if and only if the spaces of $G$-invariants in $\bw{2}\fa_1^* \otimes \fa_0$ and in $\fa_0^* \wedge \fa_1^* \otimes \fa_1$ are non-trivial.
\end{prop}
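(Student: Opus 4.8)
The plan is to follow the same pattern used for the non-skew Proposition in this subsection, isolating the single genuinely free piece of data in a skew-commuting, grading-respecting, $\g$-compatible bracket. First I would record that such a structure tensor decomposes as $B = B_{000} + B_{011} + B_{101} + B_{110}$ along the four surviving graded slots, that requirement (4) pins down $B_{000}$ as the commutator of $\sl(V)$ and $B_{011}$ as the $\g$-action on $M$, and that skew-commutativity forces $B_{000}$ and $B_{110}$ to be alternating in their first two arguments and forces $B_{101} = -B_{011}^\top$. Since the commutator is already alternating and $G$-invariant, and $-B_{011}^\top$ is $G$-invariant as soon as $B_{011}$ is, the only new datum is $B_{110}$, which the grading places in $\fa_0$ and skew-commutativity places in $\bw{2}\fa_1^*$, i.e. $B_{110} \in \bw{2}\fa_1^*\otimes\fa_0$; the forced $B_{011}$ together with $B_{101}=-B_{011}^\top$ is exactly an invariant in $\fa_0^*\wedge\fa_1^*\otimes\fa_1$.

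For the forward direction I would invoke the earlier Proposition (the one equating $G$-invariance of $B$ with $G$-equivariance of $T \mapsto \ad_T$) to conclude that each graded component $B_{ijk}$ is $G$-invariant; in particular $B_{110}$ is a $G$-invariant in $\bw{2}\fa_1^*\otimes\fa_0$ and $B_{011}$ (equivalently the pair $B_{011}, B_{101}$) is a $G$-invariant in $\fa_0^*\wedge\fa_1^*\otimes\fa_1$. The remaining task is to see both are non-zero, and here I would use the block shape $\ad_T = \left(\begin{smallmatrix} 0 & B_{110}(T) \\ B_{101}(T) & 0 \end{smallmatrix}\right)$ for $T\in M$: if either off-diagonal block vanished identically then $\ad_T^2 = 0$ for all $T\in M$, every such $\ad_T$ would be nilpotent, and the induced Jordan decomposition would be trivial (zero semisimple part for every element of $M$), contradicting the hypothesis.

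For the converse I would simply assemble the bracket: take a non-zero invariant $\beta\in\bw{2}\fa_1^*\otimes\fa_0$, the $\g$-action $B_{011}$ on $M$ (non-zero, hence a non-zero invariant in $\fa_0^*\wedge\fa_1^*\otimes\fa_1$ via $B_{101}=-B_{011}^\top$), the commutator on $\g$, and set $B$ to be their sum. Then $B$ respects the grading, agrees with the $\g$-action on $\g$ and on $M$, is alternating summand-by-summand, and is $G$-invariant summand-by-summand, so by the earlier Proposition $\fa$ has a skew-commuting product with a Jordan decomposition consistent with the $G$-action; its structure tensor is non-trivial, and for generic $T\in M$ both blocks of $\ad_T$ are non-zero, so $\ad_T$ need not be nilpotent and the Jordan decomposition is non-trivial.

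I expect the main obstacle to be making precise, and then justifying, the claim in the forward direction that a \emph{non-trivial} Jordan decomposition for elements of $M$ genuinely needs both off-diagonal blocks of $\ad_T$ --- equivalently both invariant spaces --- rather than just one; the rest is bookkeeping of which components are forced, plus the routine check that replacing $\otimes$ by $\wedge$ in the two mixed slots introduces no new constraint, because the commutator and the transpose of the $\g$-action are already skew in the required sense. A secondary point is the genericity argument in the converse: a single non-zero choice of invariants suffices because the loci where $\beta(T)\neq0$ and where $B_{101}(T)\neq0$ are Zariski-open in $M$.
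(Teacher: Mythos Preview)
Your approach is essentially the same as the paper's: the paper does not give a separate proof but simply records the proposition as a formal restatement of the preceding paragraph, where skew-symmetry forces $B$ into $\bw{2}\fa_0^*\otimes\fa_0 \;\oplus\; \fa_0^*\wedge\fa_1^*\otimes\fa_1 \;\oplus\; \bw{2}\fa_1^*\otimes\fa_0$ and $G$-invariance then amounts to non-trivial invariants in the last two summands. Your extra care with the $\ad_T^2=0$ argument to pin down ``non-trivial'' goes beyond what the paper does --- the paper leaves that word informal --- but your reading (that both off-diagonal blocks should be non-zero so the Jordan form of $\ad_T$ carries content for $T\in M$) is the intended one, and the obstacle you flag is really an imprecision in the statement rather than a gap in your argument.
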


\begin{example}[Trivectors on a 6-dimensional space]
\label{ex:g36}
Consider $M = \bw 3\CC^6$, and $\g = \sl_6$. Note that $M\cong M^*$ as $G$-modules, and likewise $\g^* = \g$. We ask if there is a non-trivial invariant 
\[
B \in 
\begin{matrix}
\fa_0^* \otimes \fa_0^* \otimes \fa_0 
 &\oplus & \fa_0^*\otimes \fa_1^* \otimes \fa_1 &\oplus & \fa_1^*\otimes \fa_0^* \otimes \fa_1 
& \oplus& \fa_1^* \otimes \fa_1^* \otimes \fa_0 
.\end{matrix}
\]
Noting the self-dualities and permuting tensor factors, we can check for invariants in 
\[
\begin{matrix}
\fa_0 \otimes \fa_0 \otimes \fa_0 
 &\oplus & \fa_0\otimes \fa_1 \otimes \fa_1 
.\end{matrix}
\]
By the Pieri rule we have $M\otimes M = \bw6 \CC^6 \oplus S_{2,1,1,1,1}\CC^6 \oplus S_{2,2,1,1}\CC^6 \oplus S_{2,2,2}\CC^6$. 

Since $\sl_6$ is irreducible, existence of a non-trivial space of invariants in $M^*\otimes M^* \otimes \sl_6$ requires a summand in $M\otimes M$ be isomorphic to $\sl_6$, which is the case since $\sl_6 \cong S_{2,1,1,1,1}\CC^6$ as a $G$-module. 
Note also (by \cite[Ex.~15.32]{FultonHarris})
$\bw{2} M = \bw6 \CC^6 \oplus S_{2,2,1,1}\CC^6$. 
So, it is impossible to have a $G$-invariant structure tensor for a globally skew-commuting product in this example.
But (by the same exercise) since $\sl_6 \subset S^2 \bw{3}\CC^6$, we see that $\fa$ does have a non-trivial $G$-invariant structure tensor that is commuting on $M$. We give up skew-symmetry and the Jacobi identity but retain Jordan decompositions of adjoint operators. 

The orbits of $\SL_6(\CC)$ in $\PP \bw3 \CC^6$ were classified in the 1930s by Schouten \cites{Schouten31,GurevichBook}. Their closures are linearly ordered. 
Table~\ref{tab:w3c6} shows that the adjoint rank profiles separate orbits. In the last case, we stop the table since the form is not nilpotent.
\begin{table}
\scalebox{.9}{
\begin{tabular}{l||l||l||l}
\begin{tabular}{l}Grassmannian:\\ $e_0 e_1 e_2$ \end{tabular}
&
\begin{tabular}{l} Restricted Chordal:\\ $e_0 e_1 e_2 + e_0 e_3 e_4$\end{tabular}
&
\begin{tabular}{l} Tangential: \\$e_0 e_1 e_2 + e_0 e_3 e_4 + e_1e_3e_5$\end{tabular}
& 
\begin{tabular}{l} Secant (general): \\$e_0 e_1 e_2 + e_3 e_4e_5$\end{tabular}
\\
$ \left|\begin{smallmatrix}
 B_{00} & B_{01} & B_{10}& B_{11}& B \\[.5ex] 
\hline\\[.5ex]
 0 & 10 & 10 & 0 & 20 \\
 0 & 0 & 0 & 1 & 1 \\
 0 & 0 & 0 & 0 & 0 \\
\end{smallmatrix} \right|$
&$ \left|\begin{smallmatrix}
 B_{00} & B_{01} & B_{10}& B_{11}& B \\[.5ex] 
\hline\\[.5ex]
0 &15 &15 &0 &30 \\
10 &0 &0 &6 &16 \\
0 &1 &1 &0 &2 \\
1 &0 &0 &0 &1 \\
0 &0 &0 &0 &0 \\
\end{smallmatrix}\right|$
&
$ \left|\begin{smallmatrix}
 B_{00} & B_{01} & B_{10}& B_{11}& B \\[.5ex] 
\hline\\[.5ex]
0 &19 &19 &0 &38 \\
18 &0 &0 &11 &29 \\
0 &10 &10 &0 &20 \\
9 &0 &0 &2 &11 \\
0 &1 &1 &0 &2 \\
0 &0 &0 &1 &1 \\
0 &0 &0 &0 &0 \\
\end{smallmatrix}\right|$
&
$ \left|\begin{smallmatrix}
 B_{00} & B_{01} & B_{10}& B_{11}& B \\[.5ex] 
\hline\\[.5ex]
0 &19 &19 &0 &38 \\
18 &0 &0 &19 &37 \\
0 &18 &18 &0 &36 \\
\end{smallmatrix}\right|$\\
\end{tabular}
}
\caption{Normal forms and adjoint rank profiles of orbits in $\PP \bw3 \CC^6$.}\label{tab:w3c6}
\end{table}
The characteristic polynomials for the nilpotent elements are $t^{55}$. For the (non-nilpotent) element $T = e_0 e_1 e_2 + e_3 e_4e_5$ we have $\chi_T(t) = \left(t\right)^{19}\left(3\,t^{2}-4\right)^{9}\left(3\,t^{2}+4\right)^{9}$, with root profile is $(19_0, (9_\CC)^2,(9_\RR)^2)$, i.e., there are roots of multiplicity 19 at 0, and 2 complex roots and 2 real roots each with multiplicity 9. 

For the nilpotent normal forms, the trace-power invariants are zero. For the general point, the trace powers of order $4k$ are non-zero. 
Since the ring of invariants is generated in degree 4, the invariant $\Tr(\ad_{T}^4)$ must be equal to a scalar multiple of this invariant, known as a hyperpfaffian. It has value $36$ on the form $e_0 e_1 e_2 + e_3 e_4e_5$. 

Now, we can ask for the JCF for the adjoint operator of these normal forms. Let us show the example with $S = e_0 e_1 e_2 + e_3 e_4e_5$. 
The kernel of the adjoint operator $\ad_S$ is spanned by the following elements from $\sl_6$: $h_1 = E_{0,0}- E_{1,1}$, $h_2 = E_{1,1}- E_{2,2}$, $h_4 = E_{3,3}- E_{4,4}$, $h_5 = E_{4,4}- E_{5,5}$ together with the 12 elements of the form $E_{i,j}$ where both $i$ and $j$ come from the same block from the partition $\{0,1,2\}\cup \{3,4,5\}$, and the element $S_- = -e_0 e_1 e_2 + e_3 e_4e_5$.

The kernel of $(\ad_S)^2$ increases by 1 dimension and includes the new vector $h_3$. 
The kernel of $(\ad_S)^3$ increases by 1 dimension and instead of the vector $S_-$, it is spanned by the two elements $e_0 e_1 e_2, e_3 e_4e_5$. 
So we can start a Jordan chain as:
\[v_1 = h_{3}+e_{3}e_{4}e_{5},\]
\[v_2 = \ad_S v_1 = \frac{1}{2}h_{1}+h_{2}+\frac{3}{2}h_{3}+h_{4}+\frac{1}{2}h_{5}-e_{0}e_{1}e_{2}+e_{3}e_{4}e_{5},\]
\[v_3 = \ad_S v_2 = -\frac{3}{2}e_{0}e_{1}e_{2}+\frac{3}{2}e_{3}e_{4}e_{5}.\]
Then complete the chain by adding elements from the kernel of $\ad_S$:
\[\begin{matrix}
h_{1},& h_{2},& h_{4},& h_{5},
& E_{0,\:1},& E_{0,\:2},& E_{1,\:2},& E_{3,\:4},\\
& E_{3,\:5},& E_{4,\:5},& E_{1,\:0},& E_{2,\:0},& E_{2,\:1},& E_{4,\:3},& E_{5,\:3},& E_{5,\:4}.
\end{matrix}
\]
The other eigenspaces have dimensions equal to their algebraic multiplicities, so choosing the remaining basis vectors of $\fa$ to be full sets of eigenvectors corresponding to the eigenvalues $\pm 1, \pm i$ for $\ad_S$ one obtains a matrix $Q$ whose columns correspond to these basis vectors and the final matrix $Q^{-1}\ad_S Q$ is in JCF, with only one non -diagonal block which is a $3\times 3$ Jordan block $J_3(0)$. 

As a final comment in this example, we mention that while none of the orbits of $\SL_6$ in $\bw3 \CC^6$ appear to be semisimple, we checked that the mixed vector $v_1$ above is, in fact, semisimple. It seems that there are many more things to discover about this algebra.
\end{example}

\begin{example}[4-vectors on an 8-dimensional space]
Now consider $M = \bw 4\CC^8$ and $\g = \sl_8$. Note that $M\cong M^*$ as $G$-modules. 
By the Pieri rule $M\otimes M = \bw8 \CC^8 \oplus S_{2,1,1,1,1,1,1}\CC^8 \oplus S_{2,2,1,1,1,1}\CC^8 \oplus S_{2,2,2,1,1}\CC^8 \oplus S_{2,2,2,2}\CC^8$. 
Since $\sl_8$ is irreducible, $M^*\otimes M^* \otimes \sl_8$ has a non-trivial space of invariants if and only if a summand in $M\otimes M$ is isomorphic to $\sl_8$, which is the case since $\sl_8 \cong S_{2,1,1,1,1,1,1}\CC^6$ as a $G$-module. 

Note also (by \cite[Ex.~15.32]{FultonHarris})
$\bw{2} M = \bw8 \CC^8 \oplus S_{2,1,1,1,1,1,1}\CC^8\oplus S_{2,2,2,1,1}\CC^8$, which contains a copy of $\sl_8$. 
So $\fa$ has a non-trivial $G$-invariant structure tensor for a skew-commuting product in this case. One checks that this product (which is unique up to scalar) also satisfies the Jacobi identity.
Antonyan \cite{Antonyan} noticed that this algebra is a copy of the Lie algebra $\mathfrak{e}_7$ and carried out Vinberg's method \cite{Vinberg75}, which says, essentially, that since $\mathfrak{e}_7$ is a semisimple Lie-algebra the nilpotent orbits can be classified by utilizing Dynkin classification of subalgebras of semisimple Lie algebras \cite{dynkin1960semisimple}. Antonyan uses a modification of Dynkin's \emph{Characteristics} to separate nilpotent orbits. 

The appendix in \cite{oeding2022} provides normal forms for each nilpotent orbit. The adjoint rank profiles can distinguish orbits. The adjoint rank profile has the advantage that it does not require one to be able to use the group action to put a given tensor into its normal form, and in that sense, it is an automatic computation. It is interesting to consider normal forms of nilpotent orbits whose stabilizers have type associated with the full Lie algebra $\mathfrak{e}_7$, and respectively $\mathfrak{e}_7(a_1)$ and $\mathfrak{e}_7(a_2)$. 
The respective normal forms, orbit numbers (from Antonyan), and adjoint rank profiles are listed in Table \ref{tab:e7s}.
\begin{table}
\[
\begin{matrix}
\text{\textnumero } 83: & e_{1345}+e_{1246}+e_{0356}+e_{1237}+e_{0247}+e_{0257}+e_{0167} \\
\text{\textnumero } 86: & e_{1245}+e_{1346}+e_{0256}+e_{1237}+e_{0347}+e_{0157}+e_{0167} \\
\text{\textnumero } 88: & e_{2345}+e_{1346}+e_{1256}+e_{0356}+e_{1237}+e_{0247}+e_{0157}
\end{matrix}
\]
%
\[\begin{matrix} \text{\textnumero } 83: \hfill \\
\left|\begin{smallmatrix}
B_{00} & B_{01} & B_{10}& B_{11}& B \\[.5ex]
\hline\\[.5ex]
 0&62&62&0&124\\
 54&0&0&61&115\\
 0&53&53&0&106\\
 46&0&0&52&98\\
 0&45&45&0&90\\
 38&0&0&44&82\\
 0&37&37&0&74\\
 31&0&0&36&67\\
 0&30&30&0&60\\
 24&0&0&29&53\\
 0&23&23&0&46\\
 19&0&0&22&41\\
 0&18&18&0&36\\
 14&0&0&17&31\\
 0&13&13&0&26\\
 10&0&0&12&22\\
 0&9&9&0&18\\
 6&0&0&9&15\\
 0&6&6&0&12\\
 4&0&0&6&10\\
 0&4&4&0&8\\
 2&0&0&4&6\\
 0&2&2&0&4\\
 1&0&0&2&3\\
 0&1&1&0&2\\
 0&0&0&1&1\\
 0&0&0&0&0\\
 \end{smallmatrix}\right|
 \end{matrix}\quad 
\begin{matrix} \text{\textnumero } 86: \hfill\\ 
 \left|\begin{smallmatrix}
 B_{00} & B_{01} & B_{10}& B_{11}& B \\[.5ex]
\hline\\[.5ex]
 0&61&61&0&122\\
 52&0&0&59&111\\
 0&50&50&0&100\\
 43&0&0&48&91\\
 0&41&41&0&82\\
 34&0&0&39&73\\
 0&32&32&0&64\\
 26&0&0&30&56\\
 0&24&24&0&48\\
 18&0&0&23&41\\
 0&17&17&0&34\\
 13&0&0&16&29\\
 0&12&12&0&24\\
 8&0&0&11&19\\
 0&7&7&0&14\\
 5&0&0&6&11\\
 0&4&4&0&8\\
 2&0&0&4&6\\
 0&2&2&0&4\\
 1&0&0&2&3\\
 0&1&1&0&2\\
 0&0&0&1&1\\
 0&0&0&0&0\\
 \end{smallmatrix}\right|
 \end{matrix} \quad
\begin{matrix} \text{\textnumero } 88: \hfill\\ 
 \left|\begin{smallmatrix}
 B_{00} & B_{01} & B_{10}& B_{11}& B \\[.5ex]
\hline\\[.5ex]
 0&63&63&0&126\\
 56&0&0&63&119\\
 0&56&56&0&112\\
 50&0&0&56&106\\
 0&50&50&0&100\\
 44&0&0&50&94\\
 0&44&44&0&88\\
 38&0&0&44&82\\
 0&38&38&0&76\\
 32&0&0&38&70\\
 0&32&32&0&64\\
 27&0&0&32&59\\
 0&27&27&0&54\\
 22&0&0&27&49\\
 0&22&22&0&44\\
 18&0&0&22&40\\
 0&18&18&0&36\\
 14&0&0&18&32\\
 0&14&14&0&28\\
 11&0&0&14&25\\
 0&11&11&0&22\\
 8&0&0&11&19\\
 0&8&8&0&16\\
 6&0&0&8&14\\
 0&6&6&0&12\\
 4&0&0&6&10\\
 0&4&4&0&8\\
 3&0&0&4&7\\
 0&3&3&0&6\\
 2&0&0&3&5\\
 0&2&2&0&4\\
 1&0&0&2&3\\
 0&1&1&0&2\\
 0&0&0&1&1\\
 0&0&0&0&0\\
 \end{smallmatrix}\right|\end{matrix}\]
\caption{Some normal forms of orbits in $\bw4 \CC^8$ and their adjoint rank profiles.}\label{tab:e7s}
\end{table}
These orbits are also distinguishable by their dimensions (seen in the first row of the adjoint rank profiles by Remark~\ref{rem:conical}). We also highlight orbits \textnumero 65, \textnumero 67, and \textnumero 69, which all have the same dimension (60). Their normal forms and adjoint rank profiles are listed in Table \ref{tab:60s}. Here, two of them even appear to have the same tensor rank (though the actual rank could be smaller). 
\begin{table}
\[
\begin{matrix}
\text{\textnumero } 65: & e_{2345}+e_{0246}+e_{1356}+e_{0237}+e_{1237}+e_{0147}+e_{0157}\\
\text{\textnumero } 67: &e_{1345}+e_{1246}+e_{0346}+e_{0256}+e_{1237}+e_{0247}+e_{0167}\\
\text{\textnumero } 69: &e_{1345}+e_{1246}+e_{0356}+e_{1237}+e_{0247}+e_{0157}
\end{matrix} 
\]
\[\begin{matrix} \text{\textnumero } 65: \hfill\\ 
 \left|\begin{smallmatrix}
 B_{00} & B_{01} & B_{10}& B_{11}& B \\[.5ex]
\hline\\[.5ex]
0&60&60&0&120\\
50&0&0&57&107\\
0&47&47&0&94\\
39&0&0&44&83\\
0&36&36&0&72\\
28&0&0&34&62\\
0&26&26&0&52\\
20&0&0&24&44\\
0&18&18&0&36\\
12&0&0&17&29\\
0&11&11&0&22\\
8&0&0&10&18\\
0&7&7&0&14\\
4&0&0&6&10\\
0&3&3&0&6\\
2&0&0&2&4\\
0&1&1&0&2\\
0&0&0&1&1\\
0&0&0&0&0\\
\end{smallmatrix}\right|\end{matrix}\quad
\begin{matrix} \text{\textnumero } 67: \hfill\\ 
 \left|\begin{smallmatrix}
 B_{00} & B_{01} & B_{10}& B_{11}& B \\[.5ex]
\hline\\[.5ex]
0&60&60&0&120\\
50&0&0&57&107\\
0&47&47&0&94\\
39&0&0&44&83\\
0&36&36&0&72\\
29&0&0&33&62\\
0&26&26&0&52\\
20&0&0&24&44\\
0&18&18&0&36\\
13&0&0&16&29\\
0&11&11&0&22\\
8&0&0&10&18\\
0&7&7&0&14\\
4&0&0&6&10\\
0&3&3&0&6\\
1&0&0&3&4\\
0&1&1&0&2\\
0&0&0&1&1\\
0&0&0&0&0\\
\end{smallmatrix}\right|\end{matrix}\quad
\begin{matrix} \text{\textnumero } 69: \hfill\\ 
 \left|\begin{smallmatrix}
 B_{00} & B_{01} & B_{10}& B_{11}& B \\[.5ex]
\hline\\[.5ex]
0&60&60&0&120\\
52&0&0&58&110\\
0&50&50&0&100\\
43&0&0&48&91\\
0&41&41&0&82\\
34&0&0&39&73\\
0&32&32&0&64\\
25&0&0&30&55\\
0&23&23&0&46\\
18&0&0&22&40\\
0&17&17&0&34\\
13&0&0&16&29\\
0&12&12&0&24\\
8&0&0&11&19\\
0&7&7&0&14\\
4&0&0&6&10\\
0&3&3&0&6\\
2&0&0&3&5\\
0&2&2&0&4\\
1&0&0&2&3\\
0&1&1&0&2\\
0&0&0&1&1\\
0&0&0&0&0\\
\end{smallmatrix}\right|\end{matrix}\]
\caption{More normal forms of orbits in $\bw4 \CC^8$ and their adjoint rank profiles.}\label{tab:60s}
\end{table}

Notice that even the ranks of the powers may not distinguish orbits, but the blocks for \textnumero 65 and \textnumero 67 do have some different ranks starting at the 6-th power.
\end{example}

The previous two examples are special cases of the following straightforward generalization:
\begin{theorem}
The vector space $\fa = \sl_{2m} \oplus \bw{m} \CC^{2m}$ has a $\ZZ_2$-graded algebra structure with a Jordan decomposition consistent with the $G$-action. There is a unique (up to scale) equivariant bracket product that agrees with the $\g$-action on $M= \bw{m}\CC^{2m}$. Moreover, it must satisfy the property that the restriction to $M \times M \to \g$ must be commuting when $m$ is odd and skew-commuting when $m$ is even. 
\end{theorem}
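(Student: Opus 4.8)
The plan is to reduce the statement to three representation-theoretic facts about $M=\bw m\CC^{2m}$ as an $\SL_{2m}$-module and then settle the symmetry type of the new bracket by a sign computation with an invariant pairing. First I would record the self-dualities that make the bookkeeping uniform: $M\cong M^*$ as $\SL_{2m}$-modules (from $\bw{m}V\cong\bw{m}V^*\otimes\bw{2m}V$ and triviality of $\bw{2m}V$ for $\SL$), and $\g=\sl_{2m}\cong\g^*$. Next, Pieri's rule applied to $\bw m V\otimes S_{(1^m)}V$ gives the multiplicity-free decomposition $\bw m V\otimes\bw m V=\bigoplus_{j=0}^{m}S_{(2^{m-j},1^{2j})}V$; the summand with $j=m-1$ is exactly $S_{(2,1^{2m-2})}V=\sl_{2m}=\g$, so $\dim\Hom_G(\g,M\otimes M)=1$. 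Dualizing, the space of $G$-invariants in $M^*\otimes M^*\otimes\g$ is $1$-dimensional and nonzero; and since the simple algebra $\g$ acts nontrivially on $M$, the standard action is a nonzero $G$-map $\g\to\End(M)=M\otimes M^*$, so the invariants in $\g^*\otimes M^*\otimes M$ are also nonzero. Applying the criterion of Section~\ref{sec:Z2} for $G$-invariant structure tensors on $\g\oplus M$ then produces a $\ZZ_2$-graded algebra structure on $\fa$ with GJD.

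For the uniqueness clause I would argue that, as already observed in Section~\ref{sec:Z2}, respecting the grading and the $G$-action on $\g$ and on $M$ forces the components $B_{000}$, $B_{011}$, $B_{101}$ of any equivariant graded bracket to be the Lie bracket of $\g$, the module action $\g\otimes M\to M$, and its negative transpose, respectively, and all of these are pinned once we demand agreement with the $\g$-action. The only free component is $B_{110}\colon M\times M\to\g$, which ranges over $\Hom_G(\g^*,M^*\otimes M^*)$, a $1$-dimensional space by the multiplicity-one statement above; hence the bracket is unique up to rescaling $B_{110}$, and is nontrivial precisely when that scalar is nonzero.

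It remains to decide whether the multiplicity-one copy of $\g$ inside $M\otimes M$ lies in $S^2M$ (commuting $B_{110}$) or in $\bw 2M$ (skew-commuting $B_{110}$). Here I would use the $G$-invariant wedge pairing $\mu\colon M\otimes M\to\bw{2m}\CC^{2m}\cong\CC$. Because $\eta\wedge\omega=(-1)^{m^2}\omega\wedge\eta=(-1)^{m}\omega\wedge\eta$, the form $\mu$ is symmetric when $m$ is even and skew when $m$ is odd. Using $\mu$ to identify $M\cong M^*$ and hence $M\otimes M\cong\End(M)$, the factor swap on $M\otimes M$ corresponds to $X\mapsto X^{\dagger}$ (the $\mu$-adjoint) when $\mu$ is symmetric and to $X\mapsto -X^{\dagger}$ when $\mu$ is skew; meanwhile $G$-invariance of $\mu$ forces every $X$ in the image of $\g\to\End(M)$ to satisfy $X^{\dagger}=-X$. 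Combining these: when $m$ is even the copy of $\g$ lands in the swap-antisymmetric part, i.e.\ $\g\subset\bw 2M$ and $B_{110}$ is skew-commuting; when $m$ is odd it lands in the swap-symmetric part, i.e.\ $\g\subset S^2M$ and $B_{110}$ is commuting. As an alternative route I would instead write down the highest-weight vector of $S_{(2^{m-j},1^{2j})}V$ explicitly inside $\bw m V\otimes\bw m V$ (it is supported on pairs $e_S\otimes e_T$ with $S\cap T$ the ``weight-$2$'' positions) and check directly that the factor swap scales it by $(-1)^{j}$; with $j=m-1$ this gives the same dichotomy.

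I expect the last step to be the delicate point: not conceptually, but because a single sign slip (in the parity of $m^2$, in how the factor swap intertwines with the $\mu$-adjoint through a possibly-skew identification $M\cong M^*$, or in the Koszul signs if one uses highest-weight vectors) would reverse the conclusion. I would cross-check the final dichotomy against the known cases: $m=2$, where $\fa\cong\mathfrak{so}_7$ and $B_{110}$ is skew-commuting; $m=3$, where $\fa=\sl_6\oplus\bw3\CC^6$ and $B_{110}$ is commuting, matching Example~\ref{ex:g36}; and $m=4$, where $\fa\cong\e_7$ and $B_{110}$ is skew-commuting.
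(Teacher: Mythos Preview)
Your proof is correct and structurally matches the paper's: both reduce existence and uniqueness to the multiplicity of $\g$ inside $M\otimes M$, computed via Pieri to be exactly one. The difference lies in the parity step. The paper simply invokes \cite[Ex.~15.32]{FultonHarris} to place the copy of $\g$ in $S^2M$ or $\bw2 M$ according to the parity of $m$, whereas you supply an independent argument via the wedge pairing $\mu\colon M\otimes M\to\bw{2m}\CC^{2m}$, tracking how the factor swap intertwines with the $\mu$-adjoint and using that the $\g$-action lands in the $\mu$-skew operators. Your route is more self-contained and makes the sign mechanism transparent; the paper's is shorter but relies on the reader chasing the cited exercise (which, in effect, amounts to your alternative highest-weight-vector computation with the $(-1)^j$ sign). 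Your cross-checks against $m=2,3,4$ are a good sanity filter and line up with the paper's Examples for $\bw3\CC^6$ and $\bw4\CC^8$.
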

\begin{proof}
Note first that $\sl_{2m}$ is an irreducible $\g = \sl_{2m}$-module (the adjoint representation), and hence a non-zero invariant structure tensor exists if and only if there is a copy of $\g$ in $M\otimes M$. Moreover, the number of such is determined by the multiplicity of $\g$ in $M\otimes M$. 
Indeed, by \cite[Ex.~15.32]{FultonHarris}, we have for $M =\bw{m} \CC^{2m}$ that precisely one copy of $\g = S_{2,1^{2m-2}} \CC^{2m}$ is contained only in $S^2 \bw{m} \CC^{2m}$ when $m$ is odd, and only in $\bw2 \bw{m} \CC^{2m}$ when $m$ is even.
\end{proof}

\subsection{A $\ZZ_3$ graded algebra from a $\g$-module}\label{sec:Z3}
At the risk of confusion of notation, for this subsection, let
$\fa = \fa_0 \oplus \fa_1 \oplus \fa_{-1}$ with $\fa_0 = \g$ and $\fa_1 = M$ as before, but also $\fa_{-1} = M^*$, the dual $\g$-module.
For the bracket on $\fa$ to respect the $\ZZ_3$ grading, it must impose conditions that respect the following decomposition.
\[\begin{array}{rcl}
\fa^* \otimes \fa^* \otimes \fa &=& (\fa_0^* \oplus \fa_1^*\oplus \fa_{-1}^*) \otimes (\fa_0^* \oplus \fa_1^*\oplus \fa_{-1}^*) \otimes (\fa_0 \oplus \fa_1\oplus \fa_{-1}) \\
 &=& 
\bigoplus_{i,j,k \in \{0,1,-1\}} \fa_i^*\otimes \fa_j^* \otimes \fa_k
\end{array}
\]
Correspondingly denote by $B_{ijk}$ the graded pieces of $B$, i.e., $B_{ijk}$ is the restriction of $B$ to $\fa_i^* \otimes \fa_j^* \otimes \fa_k$, and we equate $1$ with $+$ and $-1$ with $-$ for notational ease.
Respecting the $\ZZ_3$ grading now requires the following vanishing: $B_{ijk} = 0 $ if $k \neq i+j \mod 3$. Thus, the only non-zero blocks of $B$ must be:
\[
\begin{matrix}
B_{000} & B_{0++} & B_{0--} \\
B_{+0+} & B_{+-0} & B_{++-} \\
B_{-0-} & B_{--+} & B_{-+0} 
\end{matrix}
\]

\noindent
So $B$ must have the following structure:
\[
B \in 
\begin{matrix}
 && \fa_0^* \otimes \fa_0^* \otimes \fa_0 &\oplus & \fa_0^*\otimes \fa_1^* \otimes \fa_1 &\oplus & \fa_0^*\otimes \fa_{-1}^* \otimes \fa_{-1} \\ 
&\oplus & \fa_1^*\otimes \fa_0^* \otimes \fa_1 &\oplus & \fa_1^*\otimes \fa_{-1}^* \otimes \fa_0 &\oplus & \fa_1^*\otimes \fa_1^* \otimes \fa_{-1} \\
& \oplus& \fa_{-1}^* \otimes \fa_0^* \otimes \fa_{-1} & \oplus& \fa_{-1}^* \otimes \fa_{-1}^* \otimes \fa_1 & \oplus& \fa_{-1}^* \otimes \fa_1^* \otimes \fa_0 
\end{matrix}
\]

Correspondingly, there are three types of adjoint operators: for $X\in \g$ write $B(X) = \ad_X$, likewise, for $T\in M$ write $B(T) = \ad_T$, and for $\tau \in M^*$ write $B(\tau) = \ad_\tau$, and correspondingly with the graded pieces of each.
So, the adjoint operators have formats
\begin{equation}\label{eq:block3}
\begin{matrix}
B(X) = \left(\begin{smallmatrix}
B_{000}(X) & 0 & 0 \\ 
0 & B_{0++}(X) &0 \\
0& 0 & B_{0--}(X)\\
\end{smallmatrix}\right), &
B(T) = \left(\begin{smallmatrix}
0 & 0 & B_{+-0}(T) \\ 
B_{+0+}(T) &0 & 0 \\
0& B_{++-}(T) &0\\
\end{smallmatrix}\right),\\\\
B(\tau) = \left(\begin{smallmatrix}
0 & B_{-+0}(\tau) & 0 \\ 
0 &0 & B_{--+}(\tau) \\
B_{-0-}(\tau)& 0 &0\\
\end{smallmatrix}\right).
\end{matrix}
\end{equation}
The linearity of the construction and the grading of the bracket is apparent.
Note that each block is a map that depends linearly on its argument ($X, T$, or $\tau$).

\begin{theorem}
The vector space $\fa = \sl_{n} \oplus \bw{k} \CC^{n} \oplus \bw{n-k} \CC^{n}$ has an essentially unique non-trivial $\ZZ_3$-graded algebra structure with a Jordan decomposition consistent with the $G$-action precisely when $n = 3k$. 
Any non-trivial equivariant bracket product must satisfy the property that the restriction to $M \times M \to \g$ must be skew-commuting when $k$ is odd and commuting when $k$ is even. 
\end{theorem}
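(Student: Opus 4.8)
The plan is to turn the theorem into a sequence of multiplicity computations for the nine graded components of a would‑be structure tensor $B$, using the setup already displayed: $\fa_0=\g=\sl_n$, $\fa_1=M=\bw{k}\CC^n$, and $\fa_{-1}=M^*\cong\bw{n-k}\CC^n$ as $\SL_n$‑modules. Requiring $B$ to respect the $\ZZ_3$‑grading kills all blocks $B_{ijk}$ with $k\not\equiv i+j\pmod 3$, leaving the nine listed; requiring agreement with the $\g$‑action forces $B_{000}$ to be the Lie bracket of $\sl_n$ and $B_{0++}$, $B_{0--}$ to be the representation maps on $M$, $M^*$. Since $\sl_n$ is a simple, hence irreducible and self‑dual, $G$‑module, each of the remaining six blocks lies in a space of $G$‑invariants whose dimension is a multiplicity: the four blocks $B_{+0+}$, $B_{-0-}$, $B_{+-0}$, $B_{-+0}$ are controlled by the multiplicity of $\g$ in $\End(M)=\bw{k}\CC^n\otimes\bw{n-k}\CC^n$, while $B_{++-}$ is controlled by the multiplicity of $M^*$ in $M\otimes M$ and $B_{--+}$ by that of $M$ in $M^*\otimes M^*$. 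First I would compute $\bw{k}\CC^n\otimes\bw{n-k}\CC^n=\bigoplus_{j=0}^{\min(k,n-k)}S_{(2^j,1^{n-2j})}\CC^n$ from the Littlewood--Richardson rule for two column shapes; as $\SL_n$‑modules the $j=0$ term is trivial, the $j=1$ term is exactly $S_{(2,1^{n-2})}\CC^n=\sl_n$, and (since $\SL_n$‑reduction only removes columns of height $n$, which none of the partitions $(2^j,1^{n-2j})$ with $j\ge 1$ admit) no higher term is isomorphic to either. Hence $\g$ sits in $\End(M)$ with multiplicity one, so all four of these blocks lie in one‑dimensional invariant spaces, are unobstructed for every $k$, and contribute nothing to the existence question.

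The decisive block is therefore $B_{++-}$, viewed as a map $M\times M\to M^*$ (the block landing in $\fa_{-1}$). Again by Littlewood--Richardson, $\bw{k}\CC^n\otimes\bw{k}\CC^n=\bigoplus_{j}S_{(2^j,1^{2k-2j})}\CC^n$ over $\max(0,2k-n)\le j\le k$. The target $M^*=\bw{n-k}\CC^n=S_{(1^{n-k})}\CC^n$ is a single column, so it can match a summand only when $j=0$ and $2k=n-k$, i.e., only when $n=3k$, and (again ruling out accidental $\SL_n$‑isomorphisms as above) with multiplicity exactly one. The same analysis, using the perfect pairing $\bw{k}\CC^n\otimes\bw{n-k}\CC^n\to\bw{n}\CC^n=\CC$, handles $B_{--+}$. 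A bracket with $B_{++-}=B_{--+}=0$ makes $M\oplus M^*$ an abelian piece over $\g$ and carries no information beyond the $\g$‑module $M$, so ``non‑trivial'' forces $B_{++-}\ne 0$ (equivalently $B_{--+}\ne 0$) and hence $n=3k$; conversely, when $n=3k$ every block now lies in a one‑dimensional (or zero) invariant space, so the structure tensor is determined once one fixes scalars on the graded summands $\fa_{\pm1}$, which is the sense in which it is ``essentially unique.''

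For the symmetry statement, suppose $n=3k$. The one‑dimensional invariant space carrying $B_{++-}$ is already spanned by a familiar nonzero invariant, namely exterior multiplication $(u,v)\mapsto u\wedge v$, using $\bw{2k}\CC^n=\bw{n-k}\CC^n\cong\fa_{-1}$ as $\SL_n$‑modules. Hence $B_{++-}$ is a scalar multiple of $\wedge$, and since $u\wedge v=(-1)^{k^2}v\wedge u=(-1)^{k}v\wedge u$, the bracket $M\times M\to M^*$ is skew‑commuting when $k$ is odd and commuting when $k$ is even; the same parity holds for $B_{--+}$, either by the parallel argument or because $B_{--+}$ is the transpose of $B_{++-}$ under $\bw{n-k}\CC^n\cong(\bw{k}\CC^n)^*$. (Equivalently, one can read the parity off the plethysms $S^2(\bw{k}\CC^n)$ versus $\bw{2}(\bw{k}\CC^n)$, tracking where the summand $S_{(1^{2k})}\CC^n$ lands.) This is consistent with the known instances: $k=3$, $n=9$ recovers the $\ZZ_3$‑grading of $\e_8$ with its skew bracket, while $k=1$, $n=3$ gives the expected skew bracket $\CC^3\times\CC^3\to\bw{2}\CC^3$.

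The main obstacle I anticipate is not a single hard computation but the bookkeeping that makes ``essentially unique'' honest: one must be certain that the four blocks $B_{+0+}$, $B_{-0-}$, $B_{+-0}$, $B_{-+0}$ really are confined to one‑dimensional invariant spaces, which rests on getting the $\SL_n$‑decomposition of $\End(\bw{k}\CC^n)$ and the absence of spurious isomorphisms among the $S_{(2^j,1^{\bullet})}\CC^n$ exactly right, and one should pin down precisely which torus of rescalings is being quotiented by (blocks like $B_{+0+}$ are insensitive to rescaling $\fa_{\pm1}$, so a careful normalization is needed to avoid an extra free parameter). On the symmetry side the only delicate point is that the identification $\bw{2k}\CC^n\cong\bw{n-k}\CC^n$ is genuinely one of $\SL_n$‑modules, so that $B_{++-}$ really is exterior multiplication up to scale rather than merely matching it in dimension.
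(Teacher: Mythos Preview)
Your proposal is correct and follows essentially the same route as the paper: reduce the existence and uniqueness of $B$ to multiplicity computations on the nine graded blocks, identify $B_{++-}$ (equivalently, the invariant in $(\bw{k}\CC^n)^{\otimes 3}$) as the decisive obstruction forcing $n=3k$, and read off the symmetry of $M\times M\to M^*$ from where the single column $\bw{2k}\CC^n$ sits inside $S^2M$ versus $\bw{2}M$. Your treatment differs only in emphasis: you carry out the Littlewood--Richardson decompositions explicitly and deduce the parity from $u\wedge v=(-1)^{k}v\wedge u$, whereas the paper invokes Pieri's rule, phrases the obstruction as ``$3k$ boxes fitting in a rectangle of height $n$'' inside $(\bw{k}\CC^n)^{\otimes 3}$, and cites \cite[Ex.~15.32]{FultonHarris} for the symmetry; your handling of the ``essentially unique'' bookkeeping is, if anything, more careful than the paper's.
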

\begin{proof}
Agreement with the $\g$-action requires that $B_{000}$ be the usual commutator on $\g$ and that $B_{0++}$ should be the usual $\g$-action on $M$, while $B_{0--}$ should be the usual $\g$-action on $M^*$. 
More care must be given to ensure that the other blocks come from an invariant tensor.

For $B(T)$ we seek a non-zero invariant tensor in each block of $ \fa_1^*\otimes \fa_0^* \otimes \fa_1 \oplus \fa_1^*\otimes \fa_{-1}^* \otimes \fa_0 \oplus \fa_1^*\otimes \fa_1^* \otimes \fa_{-1} $, or noting 
that $\fa_{-1}^* = \fa_1$ we seek a non-zero tensor in each block of
$ \fa_1^*\otimes \fa_0^* \otimes \fa_1 \oplus \fa_1^*\otimes \fa_{1} \otimes \fa_0 \oplus \fa_1^*\otimes \fa_1^* \otimes \fa_{1}^* $. In the last block, note that $(\bw{k} \CC^n)^{\otimes 3}$ decomposes by an iterated application of Pieri's rule. In order to have $3k$ boxes fitting in a rectangle of height $n$, and it can have either 0, 1, 2 or 3 columns, since $k\leq n$ and $n-k \leq n$, the corresponding possibilities for $k$ are (in order) $k=0$, $3k =n$, or $3k=2n$ or $3k = 3n$. The middle two are the only non-trivial ones, and they correspond to the modules $M = \bw{k} \CC^{3k}$ and $M^* =\bw{2k} \CC^{3k}$. Hereafter $n=3k$. 

Now we look for invariants in $ \fa_1^*\otimes \fa_0^* \otimes \fa_1 \oplus \fa_1^*\otimes \fa_{1} \otimes \fa_0 $. 
Since $\sl_{3k}$ is an irreducible $\g = \sl_{3k}$-module (the adjoint representation), an interesting non-zero invariant structure tensor exists if and only if there is a copy of $\g^*$ in $M^*\otimes M$. One sees a copy of $\g^*$ in $M^* \otimes M$ by taking the transpose and noting that $\g \cong \g^*$ as $\g$-modules. 

\begin{lemma}\label{lem:dualBrackets}
Consider $M = \bw{k} \CC^n$ and $M^* = \bw{n-k} \CC^n$. There is precisely one copy of $\sl_n = \g = S_{2,1^{n-2}}$ in $M \otimes M^*$. Moreover, if $n=2k$, then the copy of $\g$ lives in $S^2 M$ if $k$ is odd and in $\bw 2 M$ if $k$ is even. 
\end{lemma}
\begin{proof}[Proof of Lemma~\ref{lem:dualBrackets}]
By the Pieri rule for $M =\bw{k} \CC^{n}$ and $M^* = \bw{k} \CC^{n}$ there is always a copy of $\g = S_{2,1^{n-2}} \CC^{n}$ in $M
\otimes M^*$ obtained by adding to a column of height $k$ one box to the second column and the rest to the first column. The total number of boxes is $n$.
This decomposition is multiplicity-free, so there is only one copy of $\g $ in $M\otimes M^*$. 
The ``moreover'' statement follows from \cite[Ex.~15.32]{FultonHarris}. 
\end{proof}
Similarly, if we wish to define the bracket for elements $\tau$ of $M^*$ we must find an interesting non-zero invariant tensor in $\fa_{-1}^* \otimes \fa_0^* \otimes \fa_{-1} \oplus \fa_{-1}^* \otimes \fa_{-1}^* \otimes \fa_1 \oplus \fa_{-1}^* \otimes \fa_1^* \otimes \fa_0 $. This is the same computation as in the case of $T$ in $M$. So, we obtain no additional obstructions. 

The question of whether this bracket can be symmetric or skew-symmetric for $T \in M$ (respectively for $\tau \in M^*$) comes down to the existence of a non-zero invariant in $\bw{2} (\fa_1^*) \otimes \fa_{-1}$ or in $S^{2} (\fa_1^*) \otimes \fa_{-1}$. Again, this amounts to finding a copy of $\fa_{-1}$ in $\bw 2\fa_1$ or in $S^2 \fa_1$. 

Again, by \cite[Ex.~15.32]{FultonHarris}, when $k$ is even, there is only a copy of $\fa_{-1} \isom \fa_1$ in $S^2\fa_1$, hence the bracket must be commuting on this part. When $k$ is odd, there is only a copy of $\fa_{-1} \isom \fa_1$ in $\bw2\fa_1$; hence the bracket must be skew-commuting on this summand. Moreover, since these decompositions are multiplicity-free, the structure tensors are essentially unique.
\end{proof}
\begin{remark}
Up to a scalar multiple the map $\fa_{-1}\times \fa_{1} \to \fa_0$ must be contraction, and the map $\fa_1 \times \fa_1 \to \fa_{-1}$ must be multiplication. Note that the product of $k$ forms is skew-symmetric when $k$ is odd and symmetric when $k$ is even. Similarly, the map $\fa_{-1} \times \fa_{-1} \to \fa_{1}$ must be contraction with the volume form followed by product and then contraction again.
\end{remark}

Skew-symmetry would require that the maps $B_{000}$, $B_{++-}$, $B_{--+}$ must be skew-symmetric, and that 
 $B_{0++} = -B_{+0+}^\top$, 
 $B_{0--} = -B_{-0-}^\top$, 
 $B_{+-0} = -B_{-+0}^\top$, 
hence, this would force $B$ in
\[
\begin{matrix}
 \bw{2}\fa_0^* \otimes \fa_0 
& \oplus \bw{2} \fa_1^* \otimes \fa_{-1}
& \oplus \bw{2} \fa_{-1}^* \otimes \fa_{1} 
 &\oplus \fa_0^*\wedge \fa_1^* \otimes \fa_1 
 &\oplus \fa_0^*\wedge \fa_{-1}^* \otimes \fa_{-1}
 &\oplus \fa_1^*\wedge \fa_{-1}^* \otimes \fa_0 
,\end{matrix}
\]
where, for example, we have again encoded the condition that $B_{101} = -B_{011}^\top$ by replacing $ \left(\fa_0^* \otimes \fa_1^* \otimes \fa_1 \right) 
 \oplus \left( \fa_1^* \otimes \fa_0^* \otimes \fa_1 \right) $ with $ \fa_0^* \wedge \fa_1^* \otimes \fa_1 $.

\begin{example}[Trivectors on a 9-dimensional space]
In Table~\ref{tab:w3c9}, we report the adjoint rank profiles of points in $\bw{3} \CC^9$ of rank $k$ (as sums of points of the Grassmannian $\Gr(3,9)$). We note that the rank profiles distinguish ranks up to 5, but not 6.
\begin{table}
\begin{tabular}{l||l}
Rank 1: & Rank 2:\\
$\left|\begin{smallmatrix}
 B_{00} & B_{01} & B_{02} & B_{10} & B_{11} & B_{12} & B_{10} & B_{21} & B_{22} & B \\[.5ex]
\hline\\[.5ex]
0 &0 &19 &19 &0 &0 &0 &20 &0 &58 \\
0 &0 &0 &0 &0 &1 &0 &0 &0 &1 \\
0 &0 &0 &0 &0 &0 &0 &0 &0 &0 
\end{smallmatrix} \right|$
&
$\left|\begin{smallmatrix}
 B_{00} & B_{01} & B_{02} & B_{10} & B_{11} & B_{12} & B_{10} & B_{21} & B_{22} & B \\[.5ex]
\hline\\[.5ex]
0 &0 &38 &38 &0 &0 &0 &38 &0 &114 \\
0 &19 &0 &0 &0 &20 &19 &0 &0 &58 \\
0 &0 &0 &0 &1 &0 &0 &0 &1 &2 \\
0 &0 &0 &0 &0 &0 &0 &1 &0 &1 \\
0 &0 &0 &0 &0 &0 &0 &0 &0 &0 \\
\end{smallmatrix}\right|
$
\\[4ex]
\hline
Rank 3: &Rank 4:\\
$\left|\begin{smallmatrix}
B_{00} & B_{01} & B_{02} & B_{10} & B_{11} & B_{12} & B_{10} & B_{21} & B_{22} & B \\[.5ex]
\hline\\[.5ex]
0 &0 &56 &56 &0 &0 &0 &56 &0 &168 \\
0 &56 &0 &0 &0 &56 &56 &0 &0 &168 \\
\end{smallmatrix}\right|$
&
$\left|\begin{smallmatrix}
 B_{00} & B_{01} & B_{02} & B_{10} & B_{11} & B_{12} & B_{10} & B_{21} & B_{22} & B \\[.5ex]
\hline\\[.5ex]
0 &0 &72 &72 &0 &0 &0 &72 &0 &216 \\
0 &72 &0 &0 &0 &72 &72 &0 &0 &216 \\
\end{smallmatrix}\right|$
\\[2ex]
\hline
Rank 5:& Rank 6:\\
$\left|\begin{smallmatrix}
 B_{00} & B_{01} & B_{02} & B_{10} & B_{11} & B_{12} & B_{10} & B_{21} & B_{22} & B \\[.5ex]
\hline\\[.5ex]
0 &0 &80 &80 &0 &0 &0 &80 &0 &240 \\
0 &80 &0 &0 &0 &80 &80 &0 &0 &240 \\
\end{smallmatrix}\right|$
&
$\left|\begin{smallmatrix}
B_{00} & B_{01} & B_{02} & B_{10} & B_{11} & B_{12} & B_{10} & B_{21} & B_{22} & B \\[.5ex]
\hline\\[.5ex]
0 &0 &80 &80 &0 &0 &0 &80 &0 &240 \\
0 &80 &0 &0 &0 &80 &80 &0 &0 &240 \\
\end{smallmatrix}\right|$\\[2ex]
\end{tabular}
\caption{Adjoint rank profiles for each tensor rank in $\bw{3} \CC^9$.}\label{tab:w3c9}
\end{table}

We are curious about nilpotent orbits, \textnumero 79, \textnumero 82, and \textnumero 87 (in the notation of \cite{Vinberg-Elashvili}), with respective types $D_4$, $A_3+A_1$, and $A_3+A_1$.
Orbit \textnumero 79 has normal form with indices $129\, 138\, 237\, 456$ and the following adjoint rank profile:
\[ \left|\begin{smallmatrix}
 B_{00} & B_{01} & B_{02} & B_{10} & B_{11} & B_{12} & B_{10} & B_{21} & B_{22} & B \\[.5ex]
\hline\\[.5ex]
0&56&0&0&0&56&56&0&0&168\\
0&0&46&46&0&0&0&48&0&140\\
36&0&0&0&38&0&0&0&38&112\\
0&28&0&0&0&29&28&0&0&85\\
0&0&19&19&0&0&0&20&0&58\\
9&0&0&0&11&0&0&0&11&31\\
0&1&0&0&0&2&1&0&0&4\\
0&0&1&1&0&0&0&1&0&3\\
0&0&0&0&1&0&0&0&1&2\\
0&0&0&0&0&1&0&0&0&1\\
0&0&0&0&0&0&0&0&0&0\\
 \end{smallmatrix}\right|\]
\noindent 
Change to $129\, 138\, 237\, 458$ (should be of type $A_3+A_1$) produces the following rank profile:
\[ \left|\begin{smallmatrix}
 B_{00} & B_{01} & B_{02} & B_{10} & B_{11} & B_{12} & B_{10} & B_{21} & B_{22} & B \\[.5ex]
\hline\\[.5ex]
0&55&0&0&0&54&55&0&0&164\\
0&0&33&33&0&0&0&38&0&104\\
16&0&0&0&21&0&0&0&21&58\\
0&6&0&0&0&10&6&0&0&22\\
0&0&2&2&0&0&0&0&0&4\\
1&0&0&0&0&0&0&0&0&1\\
0&0&0&0&0&0&0&0&0&0
 \end{smallmatrix}\right|\]
This agrees with the rank profile of orbit \textnumero 82 and not that of \textnumero 87, which has rank profile:
\[ \left|\begin{smallmatrix}
 B_{00} & B_{01} & B_{02} & B_{10} & B_{11} & B_{12} & B_{10} & B_{21} & B_{22} & B \\[.5ex]
\hline\\[.5ex]
0&52&0&0&0&60&52&0&0&164\\
0&0&33&33&0&0&0&38&0&104\\
16&0&0&0&21&0&0&0&21&58\\
0&8&0&0&0&6&8&0&0&22\\
0&0&1&1&0&0&0&2&0&4\\
1&0&0&0&0&0&0&0&0&1\\
0&0&0&0&0&0&0&0&0&0
 \end{smallmatrix}\right|\]
 The differences are subtle and are not recognized by the total rank, only by the block ranks. 
 
The rank profiles and trace powers of all $\SL_9$ orbits in $\bw3\CC^9$ take too much space to include here, so we included them 
 as an ancillary file accompanying the arXiv version of this article. 
 
Two other interesting cases are two different representatives for orbit \textnumero 9:
\[E_1 = e_{3}e_{4}e_{5}+e_{0}e_{3}e_{6}+e_{1}e_{4}e_{6}+e_{2}e_{5}e_{6}+e_{1}e_{3}e_{7}+e_{2}e_{4}e_{7}+e_{0}e_{5}e_{7}+e_{1}e_{2}e_{8},\]
\[
E_2 = e_{3}e_{4}e_{5}+e_{0}e_{3}e_{6}+e_{1}e_{4}e_{6}+e_{2}e_{4}e_{6}+e_{2}e_{3}e_{7}+e_{0}e_{4}e_{7}+e_{1}e_{5}e_{7}+e_{0}e_{2}e_{8}.\]
Note that \cite{Vinberg-Elashvili} point out that these representatives have different minimal ambient regular semisimple subalgebras, but since they have the same characteristic, they lie on the same $\SL_9$-orbit. They both have the same rank profile (see below); hence, the rank profiles seem to give the same information as the characteristics.
\[ \left|\begin{smallmatrix}
 B_{00} & B_{01} & B_{02} & B_{10} & B_{11} & B_{12} & B_{10} & B_{21} & B_{22} & B \\[.5ex]
\hline\\[.5ex]
0&76&0&0&0&74&76&0&0&226\\
0&0&66&66&0&0&0&72&0&204\\
58&0&0&0&62&0&0&0&62&182\\
0&54&0&0&0&56&54&0&0&164\\
0&0&48&48&0&0&0&50&0&146\\
41&0&0&0&44&0&0&0&44&129\\
0&37&0&0&0&38&37&0&0&112\\
0&0&31&31&0&0&0&35&0&97\\
24&0&0&0&29&0&0&0&29&82\\
0&22&0&0&0&26&22&0&0&70\\
0&0&19&19&0&0&0&20&0&58\\
15&0&0&0&17&0&0&0&17&49\\
0&13&0&0&0&14&13&0&0&40\\
0&0&10&10&0&0&0&11&0&31\\
6&0&0&0&8&0&0&0&8&22\\
0&4&0&0&0&8&4&0&0&16\\
0&0&4&4&0&0&0&2&0&10\\
3&0&0&0&2&0&0&0&2&7\\
0&1&0&0&0&2&1&0&0&4\\
0&0&1&1&0&0&0&1&0&3\\
0&0&0&0&1&0&0&0&1&2\\
0&0&0&0&0&1&0&0&0&1\\
0&0&0&0&0&0&0&0&0&0
\end{smallmatrix}\right|\]
In the Appendix we provide the trace-power invariants and rank profiles for all the orbits in \cite{Vinberg-Elashvili}.
\end{example}

\subsection{Extending the exterior algebra}\label{sec:Zm}
\begin{theorem}
The vector space $\fa = \sl_{n} \oplus \bigoplus_{k=1 \ldots n-1}\bw{k} \CC^{n}$ has a $\ZZ_n$-graded algebra structure with a Jordan decomposition consistent with the $G = \SL(V)$-action. There is a unique (up to scale) equivariant bracket product that agrees with the $\g$-action on each $\g_i$. If $n = 2k$, the equivariant bracket must satisfy the property that the restriction to $\bw{k} \CC^{n} \times \bw{k} \CC^{n} \to \g$ must be commuting when $k$ is odd and skew-commuting when $k$ is even. For any $k$ such that $2k \neq n$ the bracket $\bw{k} \CC^{n} \times \bw{k} \CC^{n} \to \bw{2k \mod n}\CC^n$ must be skew-commuting when $k$ is odd and commuting when $k$ is even. 
\end{theorem}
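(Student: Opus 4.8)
The plan is to run the argument that settled the $\ZZ_2$- and $\ZZ_3$-graded cases, now book-kept over $\ZZ/n$. Write $\fa_0=\g=\sl_n$ and $\fa_i=\bw{i}\CC^n$ for $1\le i\le n-1$, so $\fa=\bigoplus_{i\in\ZZ/n}\fa_i$. Respecting the grading, a $G$-invariant structure tensor $B$ is exactly a family of $G$-maps $B_{ij}\colon\fa_i\otimes\fa_j\to\fa_{(i+j)\bmod n}$ reassembled by bilinearity into block operators as in \eqref{eq:block2}--\eqref{eq:block3}; by the equivariance Proposition this is the same as making $T\mapsto\ad_T$ $G$-equivariant, so any such $B$ automatically yields adjoint operators whose Jordan form is a $G$-invariant. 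Hence the theorem reduces to: (i) pinning down the blocks that touch $\fa_0$, (ii) showing $\dim\Hom_G\!\big(\bw{i}\CC^n\otimes\bw{j}\CC^n,\,\fa_{(i+j)\bmod n}\big)=1$ for all $i,j\ge1$, and (iii) reading off a symmetry sign when $i=j$. Step (i) is forced by requiring the bracket to restrict to the $\g$-module structure on every $\fa_i$: $B_{00}$ must be the Lie bracket of $\sl_n$ and the blocks $B_{0i},B_{i0}$ the (skew-extended) natural action on $\bw{i}\CC^n$; these exist and are unique because $\g=S_{2,1^{n-2}}\CC^n$ is irreducible and occurs exactly once in $\bw{i}\CC^n\otimes\bw{i}(\CC^n)^*\cong\bw{i}\CC^n\otimes\bw{n-i}\CC^n$ by Lemma~\ref{lem:dualBrackets}.

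For (ii) I would split into three regimes. If $i+j<n$, Pieri's rule gives the multiplicity-free decomposition $\bw{i}\CC^n\otimes\bw{j}\CC^n=\bigoplus_{t}S_{(2^t,1^{\,i+j-2t})}\CC^n$, whose $t=0$ term is $\bw{i+j}\CC^n=\fa_{i+j}$, so $\Hom_G$ is one-dimensional and $B_{ij}$ is a nonzero scalar times the exterior product. If $i+j=n$, then $\bw{i}\CC^n\otimes\bw{n-i}\CC^n$ contains exactly one copy of $\g$ by Lemma~\ref{lem:dualBrackets}, so $B_{i,n-i}$ is one-dimensional (a scalar times the natural projection $\bw{i}\CC^n\otimes\bw{n-i}\CC^n\cong\End(\bw{i}\CC^n)\to\sl_n$). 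If $i+j>n$, set $p=i+j-n\in\{1,\dots,n-2\}$ and use the $\SL_n$-isomorphisms $\bw{i}\CC^n\cong\bw{n-i}(\CC^n)^*$, $\bw{j}\CC^n\cong\bw{n-j}(\CC^n)^*$, $\bw{p}\CC^n\cong\bw{(n-i)+(n-j)}(\CC^n)^*$; since $(n-i)+(n-j)=n-p<n$ this is the first regime applied to $(\CC^n)^*$, so $\Hom_G$ is again one-dimensional. In each regime the relevant Pieri/plethysm decomposition is multiplicity-free, so $B$ is determined up to independently rescaling its graded components (the components through $\fa_0$ being fixed), every component is nonzero, and hence $\fa$ carries an essentially unique nontrivial $G$-invariant $\ZZ_n$-graded bracket, i.e.\ it has GJD.

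For (iii), the symmetry type of $B_{kk}$ is detected by whether the summand of $\bw{k}\CC^n\otimes\bw{k}\CC^n$ isomorphic to $\fa_{2k\bmod n}$ lies in $S^2\bw{k}\CC^n$ or in $\bw{2}\bw{k}\CC^n$. When $2k=n$ this is exactly the ``moreover'' clause of Lemma~\ref{lem:dualBrackets}: the copy of $\g$ lies in $S^2\bw{k}\CC^n$ for odd $k$ and in $\bw{2}\bw{k}\CC^n$ for even $k$, so $\bw{k}\CC^n\times\bw{k}\CC^n\to\g$ is commuting for odd $k$ and skew-commuting for even $k$. When $2k\ne n$, the identifications above realize $B_{kk}$ as a nonzero scalar times exterior multiplication of forms, and its symmetry follows from the sign rule of the exterior algebra together with \cite[Ex.~15.32]{FultonHarris} applied to $S^2\bw{k}\CC^n$ versus $\bw{2}\bw{k}\CC^n$. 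I expect the genuine obstacle to live entirely in the wrap-around regime $i+j>n$: one must check that dualizing to $(\CC^n)^*$ and deleting full columns of height $n$ from the relevant partitions never produces a second copy of the target module (so the $\Hom$-space stays one-dimensional), and one must transport the symmetry sign consistently through the isomorphism $\bw{k}\CC^n\cong\bw{n-k}(\CC^n)^*$, which trades $k$ for $n-k$ --- this is the delicate point in the parity bookkeeping for $B_{kk}$ when $2k>n$.
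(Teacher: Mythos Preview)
Your proposal is correct and follows essentially the same route as the paper: fix the $\fa_0$-blocks by the $\g$-action, produce the remaining blocks from exterior multiplication or its contracted/dualized form, and deduce both existence and uniqueness from the multiplicity-free Pieri decomposition of $\bw{i}\CC^n\otimes\bw{j}\CC^n$, with Lemma~\ref{lem:dualBrackets} and \cite[Ex.~15.32]{FultonHarris} handling the symmetry types. Your explicit split into the three regimes $i+j<n$, $i+j=n$, $i+j>n$ (via the $\SL_n$-duality $\bw{i}\CC^n\cong\bw{n-i}(\CC^n)^*$) is slightly more detailed than the paper's one-line ``usual exterior algebra products or their contracted forms,'' but the content is the same; the worry you flag about the wrap-around case is not a real obstacle, since Pieri on $(\CC^n)^*$ is just as multiplicity-free and the parity of $k$ is unchanged under $k\mapsto n-k$ exactly when $n$ is even, which is consistent with the stated sign rule.
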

\begin{proof}
To construct this algebra, consider the exterior algebra $\bw{\bullet} \CC^n$ as a graded algebra, and we attach $\sl_n$ to it to try to make a Jordan decomposition as well. We do this by replacing $\bw0\CC^n = \bw n\CC^n$ with $\sl_n = \fa_0$. We define the brackets $\sl_n \times \bw{k} \CC^n \to \bw{k}\CC^n$ via the usual Lie algebra action on $\bw{k} \CC^n$. 
We use the usual exterior algebra products or their contracted forms to define the brackets $[\,,\,] \colon \bw{i} \CC^n \times \bw{j} \CC^n \to \bw{i+j \mod n} \CC^n$ for $i+j \not \equiv 0 \mod n$. When $i+j = n$ we utilize Lemma~\ref{lem:dualBrackets} to define the bracket $[\,,\,] \colon \bw{k} \CC^n \times \bw{n-k} \CC^n \to \sl_n$, and in the case $n=2k$ we have a ``middle map'', which must be commuting / skew-commuting respectively when $k$ is odd /even, again by Lemma~\ref{lem:dualBrackets}. 

There is an equivariant bracket $\bw{k} \CC^{n} \times \bw{k} \CC^{n} \to \bw{2k \mod n}\CC^n$ if and only if there is an invariant in $(\bw{k} \CC^{n})^* \otimes (\bw{k} \CC^{n})^* \otimes \bw{2k \mod n}\CC^n$, or equivalently if and only if there is a copy of $ \bw{2k \mod n}\CC^n$ in $\bw{k} \CC^{n} \otimes \bw{k} \CC^{n}$. This is true again by the Pieri rule. 

Uniqueness comes from the fact that the Pieri rule not only gives the decomposition of $\bw{k}\CC^n \otimes \bw{\ell} \CC^n$ but also has the consequence that the decomposition into irreducible representations is multiplicity-free. In particular, the spaces of invariants $\bw{n}\CC^n$, or $\sl_n(\CC^)$ can only occur once, so the dimension of the space of choices of brackets is at most one, hence there is a unique bracket up to scale (in each graded piece).
\end{proof}
\begin{remark}
We see from this result that $\fa = \sl_{n} \oplus \bigoplus_{k=1 \ldots n-1}\bw{k} \CC^{n}$ cannot be a Lie algebra whenever the conditions of the theorem force it to have commuting products unless one takes those graded products to be the trivial product (choosing the scalars to be zero). 
\end{remark}

\subsection{Rank subadditivity and semi-continutity}
Because of the linearity of the construction, the rank of the adjoint operator $B(T)$ gives a bound for the additive rank and border rank of the element $T$. 
In the $\ZZ_{2}$-graded and $\ZZ_{3}$-graded cases, we have respective block structures seen at \eqref{eq:block2} and \eqref{eq:block3}, and these blocks also provide rank and border rank bounds. In general, the following statement mimics prior work of Landsberg and Ottaviani (see \cite{LanOtt11_Equations, galkazka2017vector}).
\begin{prop}[Landsberg-Ottaviani \cite{LanOtt11_Equations}]\label{prop:LanOtt}
Suppose the operation $T \mapsto F_T$ produces a linear mapping depending linearly on $T$. Then if the maximum over $T \in X$ has $\rank(F_T)= k$, then
\[
X\text{-}\rank(T) \leq r \Rightarrow \rank(F_T)\leq kr,
\]
and the contrapositive is the tensor rank bound:
\[
 \rank(F_T)> kr \Rightarrow X\text{-}\rank(T) > r 
.\]
Moreover, semi-continuity also allows us to conclude that
\[
 \rank(F_T)> kr \Rightarrow X\text{-border}\rank(T) > r 
.\]
\end{prop}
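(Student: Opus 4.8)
The plan is to run the standard flattening-rank lower bound argument of Landsberg--Ottaviani, using only linearity of $T \mapsto F_T$, subadditivity of matrix rank, and the fact that determinantal conditions are Zariski closed.

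First I would unwind the hypotheses. Write $\widehat X$ for the affine cone over $X$; then $X\text{-}\operatorname{rank}(T) \le r$ means $T = t_1 + \dots + t_r$ with each $t_i \in \widehat X$. Because $F$ is linear in its argument, $F_T = F_{t_1} + \dots + F_{t_r}$. Next I would observe that $\operatorname{rank}(F_s) \le k$ for every $s \in \widehat X$: the hypothesis gives $\operatorname{rank}(F_s) \le k$ for all $s \in X$, and since $F_{\lambda s} = \lambda F_s$ the rank of $F_s$ is unchanged by any nonzero rescaling (and is $0$ at $s=0$), so the bound extends from $X$ to all of $\widehat X$. Now apply subadditivity of matrix rank repeatedly:
\[
\operatorname{rank}(F_T) \;\le\; \sum_{i=1}^{r} \operatorname{rank}(F_{t_i}) \;\le\; kr .
\]
This establishes the first implication, and its contrapositive $\operatorname{rank}(F_T) > kr \Rightarrow X\text{-}\operatorname{rank}(T) > r$ is then immediate.

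For the border-rank statement I would invoke semicontinuity of rank. Set $Z_{kr} := \{\, T : \operatorname{rank}(F_T) \le kr \,\}$. Since the entries of the matrix of $F_T$ are linear (hence polynomial) in $T$, the set $Z_{kr}$ is the common zero locus of the $(kr+1)\times(kr+1)$ minors of that matrix, and is therefore Zariski closed. By the first part, $Z_{kr}$ contains every tensor of $X$-rank at most $r$, so it contains the Zariski closure of that set; by definition this closure is exactly the set of tensors of $X$-border rank at most $r$. Hence $X\text{-border}\operatorname{rank}(T) \le r \Rightarrow \operatorname{rank}(F_T) \le kr$, and the contrapositive gives the final claim.

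The only step that requires any care is the uniform bound $\operatorname{rank}(F_s) \le k$ over the whole cone $\widehat X$, which genuinely uses both the homogeneity $F_{\lambda s} = \lambda F_s$ and the fact that the hypothesis controls the rank at \emph{every} point of $X$ rather than merely a generic one; the remaining ingredients — subadditivity of matrix rank and closedness of determinantal loci — are routine, so I do not expect a real obstacle here.
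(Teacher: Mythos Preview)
Your argument is correct and is exactly the standard Landsberg--Ottaviani flattening argument: linearity gives $F_T = \sum_i F_{t_i}$, subadditivity of matrix rank gives $\operatorname{rank}(F_T)\le kr$, and Zariski closedness of the determinantal locus $\{T:\operatorname{rank}(F_T)\le kr\}$ upgrades the conclusion to border rank. The paper does not supply its own proof of this proposition---it simply attributes the result to \cite{LanOtt11_Equations} and immediately specializes it to the adjoint-operator setting---so there is nothing to compare against beyond noting that your writeup is the expected proof.
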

In our case, this result translates to the following.
\begin{prop}
Suppose $M$ is a graded $\g$-module such that $\g \oplus M$ has a meaningful Jordan decomposition, that $X$ is a $G$-variety for $G$ the connected component of the identity of the Lie group $\exp \g$. The $X$-rank of a tensor $T$ is bounded by
\[
X\text{-rank}(T) \leq \rank(B(T)) / k, \quad
and \quad
X\text{-rank}(T) \leq \rank(B_{I}(T)) / k_{I} 
.\]
where $k$ is the maximal possible rank of $B(S)$ for $S\in X$, $k_{I}$ is the maximal possible rank of $B_{I}(S)$ for $S\in X$ for each relevant index $I$.
\end{prop}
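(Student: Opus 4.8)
The plan is to obtain both bounds as a direct specialization of Proposition~\ref{prop:LanOtt}, applied once to the full adjoint map and once to each graded block. First I would set $F_T := B(T) = \ad_T$ for the first estimate, and $F_T := B_I(T)$ for the block estimates, where $B_I$ ranges over the nonzero graded pieces appearing in the block decompositions \eqref{eq:block2} and \eqref{eq:block3}. The crucial input is that each of these assignments is \emph{linear} in $T$: this is exactly the content of the identity $B(X+T) = B(X)+B(T)$ recorded just after \eqref{eq:block2}, and it descends to each block since projecting the structure tensor onto a fixed graded summand is itself linear. Hence the hypothesis of Proposition~\ref{prop:LanOtt}, that $T \mapsto F_T$ depends linearly on $T$, holds verbatim for $F_T = B(T)$ and for every $F_T = B_I(T)$.

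Next I would pin down the constants. By definition $k$ is the maximum of $\rank(B(S))$ as $S$ ranges over $X$, and $k_I$ the maximum of $\rank(B_I(S))$ over the same $X$; these are precisely the numbers ``$k$'' demanded by Proposition~\ref{prop:LanOtt}. I would check that they are well defined and attained: because $B$ is $G$-equivariant (the $G$-equivariance of the adjoint map established earlier), the function $S \mapsto \rank(B(S))$ is constant along $G$-orbits, and since matrix rank is lower-semicontinuous, its maximum over the irreducible $G$-variety $X$ is attained on a dense open subset, so $k$ is simply the generic rank on $X$. The identical remark applies to each block map $B_I$, giving a well-defined generic block rank $k_I$.

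With these identifications the conclusion is immediate: applying Proposition~\ref{prop:LanOtt} with $F_T = B(T)$ gives $X\text{-rank}(T) \le \rank(B(T))/k$, and applying it with $F_T = B_I(T)$ for each relevant index gives $X\text{-rank}(T) \le \rank(B_I(T))/k_I$, which are exactly the asserted bounds. Finally, since Proposition~\ref{prop:LanOtt} also records the semicontinuity refinement, I would note in passing that the same estimates persist with $X$-rank replaced by $X$-border-rank, so the block data obstructs small border rank as well as small rank.

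The step I expect to require the most care is the bookkeeping for the blocks. One must verify that each $k_I$ is taken over the \emph{same} variety $X$ of simple tensors used to define the notion of $X$-rank, rather than over all of $M$, and that the block projection of the adjoint operator of a simple summand behaves additively, so that the rank-subadditivity underlying Proposition~\ref{prop:LanOtt} can be invoked block-by-block. Once the generic ranks $k$ and $k_I$ are correctly identified as maxima over $X$ and their attainment on a dense open set is justified via equivariance and semicontinuity, the remainder is a formal invocation of Proposition~\ref{prop:LanOtt}.
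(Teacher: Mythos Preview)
Your approach is exactly the paper's: the proposition is stated there without proof, immediately after Proposition~\ref{prop:LanOtt} as a direct translation of it, invoking only the linearity of $T \mapsto B(T)$ and of each block $T \mapsto B_I(T)$. One caveat worth flagging (inherited from the paper's own statement): the inequality direction is reversed---from $X\text{-rank}(T)\le r \Rightarrow \rank(F_T)\le kr$ one obtains the \emph{lower} bound $X\text{-rank}(T)\ge \rank(B(T))/k$, not the upper bound as written.
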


We also note that the semi-continuity of matrix rank implies that the rank profiles of adjoint forms can be used as a negative test for orbit closure containment.

\subsection{Dimensions of conical orbits from adjoint rank profiles} Suppose $\fa = \fa_0\oplus \fa_1 \oplus \cdots$ with $\g = \fa_0$ and a tensor $T\in M = \fa_1$, as constructed above so that $T$ has a meaningful Jordan decomposition. The rank of the block matrices $B_{101}(T)$ are connected to the dimension of the cone over the orbit closure $\PP(G.T)\subset\PP(M)$. More precisely, we have the following:
\begin{prop} Notation as above.
 If $T$ is such that $G.T\subset M$ is conical, i.e., invariant by scalar multiplication, then $\rank(B_{101}(T))=\dim\left(\PP(\overline{G.T})\subset\PP M \right)+1$. If $G.T$ is not conical, then
 $\rank(B_{101}(T))=\dim\left( \PP(\overline{G.T})\subset\PP M \right)$.
\end{prop}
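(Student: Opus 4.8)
The plan is to identify the block $B_{101}(T)\colon \fa_0\to\fa_1$ with, up to sign, the infinitesimal action map, and then convert ranks of differentials into orbit dimensions. First I would observe that, since the bracket respects the grading, $\ad_T(X)=[T,X]$ lies entirely in $M=\fa_1$ for every $X\in\g=\fa_0$, so $B_{101}(T)(X)=[T,X]$. By requirement~(4) the bracket restricted to $\g\times M$ reproduces the module action, $[X,T]=X.T$, and (as noted in the discussion of the skew-commuting requirement) it must be skew on $\g\times M$, so $B_{101}(T)(X)=[T,X]=-X.T$. Hence $\rank\big(B_{101}(T)\big)=\dim\rho_T(\g)=\dim(\g.T)$, where $\rho_T\colon\g\to M$, $X\mapsto X.T$, is the infinitesimal action of $\g$ at $T$.

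Second, I would invoke the standard dictionary, valid over $\CC$: $\rho_T$ is the differential at the identity of the orbit map $\mu_T\colon G\to M$, $g\mapsto g.T$; the orbit $G.T$ is a smooth locally closed subvariety; $\im(d\mu_T|_e)=T_T(G.T)$ and $\ker(d\mu_T|_e)=\operatorname{Lie}(G_T)$. Since $\dim\operatorname{Lie}(G_T)=\dim G_T$ in characteristic zero, this gives $\rank\big(B_{101}(T)\big)=\dim(G.T)=\dim\overline{G.T}$, so the problem reduces to comparing $\dim\overline{G.T}$ with $\dim\PP(\overline{G.T})$, where $\PP(\overline{G.T})\subset\PP M$ is understood (assuming $T\neq 0$) as the closure of the $G$-orbit $G.[T]$ of the point $[T]$, and hence $\dim\PP(\overline{G.T})=\dim(G.[T])$.

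Third, I would control the generic fibre of $M\setminus\{0\}\to\PP M$ along the orbit via the stabilizer $G_{[T]}$ of $[T]\in\PP M$ together with the character $\chi\colon G_{[T]}\to\CC^{*}$ defined by $g.T=\chi(g)\,T$, whose kernel is $G_T$; thus $\dim G_{[T]}=\dim G_T+\dim(\im\chi)$. Because $\im\chi$ is the image of a homomorphism of algebraic groups, it is a closed subgroup of $\CC^{*}$, hence either finite or all of $\CC^{*}$, and it is all of $\CC^{*}$ exactly when $\CC^{*}T\subseteq G.T$, i.e. exactly when $G.T$ is conical. In the conical case $\dim G_{[T]}=\dim G_T+1$, so $\dim\PP(\overline{G.T})=\dim(G.T)-1=\rank\big(B_{101}(T)\big)-1$; in the non-conical case $\dim G_{[T]}=\dim G_T$, so $\dim\PP(\overline{G.T})=\dim(G.T)=\rank\big(B_{101}(T)\big)$. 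Rearranging yields the two asserted equalities.

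The step I expect to require the most care is the third one: pinning down what $\PP(\overline{G.T})$ means when $\overline{G.T}$ is not a cone, and checking cleanly that ``$G.T$ not conical'' forces $\im\chi$ to be finite (equivalently $\rho_T(\g)\cap\langle T\rangle=0$), so that the orbit and its projectivization have the same dimension. The first two steps are routine bookkeeping with the grading and with the infinitesimal-action/orbit-map correspondence; the only other point to keep in mind is the characteristic-zero hypothesis, which is what makes the stabilizer groups and their Lie algebras have matching dimension.
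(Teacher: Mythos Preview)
Your proof is correct. The paper's argument rests on the same identification $\operatorname{im} B_{101}(T)=[\g,T]$, but handles the conical/non-conical split geometrically rather than group-theoretically: instead of comparing stabilizers $G_T\subset G_{[T]}$ via the character $\chi$, it works with the affine tangent space $\widehat{T}_T(G.T)=T+[\g,T]$ and observes directly that $T\in[\g,T]$ if and only if $G.T$ is conical, so that $\dim(\CC T+[\g,T])$ is $\rank B_{101}(T)$ in the conical case and $\rank B_{101}(T)+1$ otherwise. Your stabilizer/character argument is the group-level dual of this Lie-algebra check (indeed $T\in\g.T$ iff $d\chi\neq 0$ iff $\im\chi=\CC^*$); it is a bit longer but makes the finite-versus-$\CC^*$ dichotomy explicit and also addresses head-on the point you flag as delicate, namely what $\PP(\overline{G.T})$ should mean when the orbit is not a cone.
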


\begin{proof} Consider $\pi:M\to \PP M$ the projection and let $p$ be a point of the orbit $G.T\subset M$. Our proof comes down to an interpretation of the tangent space $\widehat{T}_p G.T$. Without loss of generality, we may assume $p=T$ since the dimension of the tangent space of an orbit is constant along the orbit.
The tangent space $\widehat{T}_T G.T$ projects to $T_{\pi(T)}\pi (G.T)$ and we have:
\begin{equation}
 \widehat{T}_T G.T=T+[\mathfrak{g},T].
\end{equation}
If $G.T$ is conical then $T\in [\mathfrak{g},T]$ while if $G.T$ is not conical $T\notin [\mathfrak{g},T]$. Combining with the fact that the image of $B_{101}(T)$ is
 the space $[\mathfrak{g},T]$ the result follows.
\end{proof}

\begin{remark}\label{rem:conical}
Since nilpotent elements are conical, we can read the dimension of the orbit from the block of the first row in the Table of Example \ref{ex:g36}. For the Grassmannian, the restricted chordal and the tangential variety, which are all nilpotent and thus conical, the first entry of the table provides the dimension of the cone of the orbit $(10, 15, 19)$, i.e., the dimension of the projective variety plus one. However, the secant variety is not conical, and therefore the value provided by the table, i.e., $19$, is the dimension of the secant variety $\sigma_2(\Gr(3,6))\subset \PP \bw3 \CC^6$. 
\end{remark}

\subsection{Some approachable examples}
As the exterior algebra $\bw{\bullet}\CC^n$ has dimension $2^n$, this can get unwieldy quickly. Here, we study several small cases that are still manageable.
\begin{example}[$4\times 4\times 4$ tensors and matrix multiplication]\label{ex:444}
Notice that we have a containment $\bw 3 \CC^{12} \supset \CC^4 \otimes \CC^4\otimes \CC^4$, so we can work with the algebra $\fa = \sl_{12}\oplus \bw 3 \CC^{12} \oplus \bw 6\CC^{12} \oplus \bw 9 \CC^{12} $, and consider tensors in $\CC^4 \otimes \CC^4\otimes \CC^4$ as elements in $\fa$, which has dimension 1507, and yields adjoint operators represented by $1507 \times 1507$ matrices.

We might also use $\fa^{(4,4,4)} = \sl_{4}^{\times 3}\oplus (\CC^4 \otimes \CC^4\otimes \CC^4) \oplus (\bw 2 \CC^4 \otimes \bw 2 \CC^4\otimes \bw 2\CC^4) \oplus (\bw 3 \CC^4 \otimes \bw 3 \CC^4\otimes \bw 3\CC^4) $, which produces adjoint operators with matrix size $389 \times 389$.
The matrix $2\times 2$ multiplication tensor viewed in $\bw 4 \CC^{12}$ is 
computed via $\trace(ABC^\top)$ for generic $2\times 2$ matrices $A,B,C$, so that $AB = C$, and the term $\lambda a_ib_j c_k$ is present in $\trace(ABC^\top)$ if $\lambda a_i b_j$ appears in the entry corresponding to $c_k$ and the indices $i,j,k$ are double indices.

In Table~\ref{tab:mmult} we list the rank profiles for adjoint operators in $\fa$ for several tensors, starting with the matrix multiplication tensor, then tensors of increasing tensor ranks. The adjoint form for random of Rank $7$ has the same rank profile as the Rank $6$ case, so the rank profiles over $\fa$ do not distinguish Rank $6$ from Rank $7$. 
We also tried restricting to just the sub-algebra $\fa^{(4,4,4)}$ and found that we get less information: and $5$ and $6$ have the same profile in $\fa^{(4,4,4)}$, but their rank profiles in $\fa$ are distinct.

\begin{table}
\[\text{mmult}:\;  e_{0}e_{4}e_{8}+e_{2}e_{5}e_{8}+e_{1}e_{4}e_{9}+e_{3}e_{5}e_{9}+e_{0}e_{6}e_{10}+e_{2}e_{7}e_{10}+e_{1}e_{6}e_{11}+e_{3}e_{7}e_{11}\]
\[\begin{array}{rl}
\ad_T^\fa: & \left|\begin{smallmatrix}
B_{00}&B_{10}&B_{20}&B_{30}&B_{01}&B_{11}&B_{21}&B_{31}&B_{02}&B_{12}&B_{22}&B_{32}&B_{03}&B_{13}&B_{23}&B_{33}&B \\
\hline\\[.5ex]
 0&132&0&0&0&0&219&0&0&0&0&219&132&0&0&0&702\\
 0&0&132&0&0&0&0&0&132&0&0&0&0&132&0&0&396\\
 0&0&0&0&0&0&0&0&0&132&0&0&0&0&132&0&264\\
 0&0&0&0&0&0&0&0&0&0&132&0&0&0&0&0&132\\
 0&0&0&0&0&0&0&0&0&0&0&0&0&0&0&0&0\\
\end{smallmatrix}\right|
\end{array}
\]

\[\text{Rank 1: } e_{0}e_{4}e_{8}\]
\[\begin{array}{rl}
\ad_T^\fa: &  \left|\begin{smallmatrix}
B_{00}&B_{10}&B_{20}&B_{30}&B_{01}&B_{11}&B_{21}&B_{31}&B_{02}&B_{12}&B_{22}&B_{32}&B_{03}&B_{13}&B_{23}&B_{33}&B \\
\hline\\[.5ex]
0&28&0&0&0&0&84&0&0&0&0&84&28&0&0&0&224\\
0&0&0&0&0&0&0&0&0&0&0&0&0&1&0&0&1\\
0&0&0&0&0&0&0&0&0&0&0&0&0&0&0&0&0\\
\end{smallmatrix}\right|
\end{array}
\]

\[\text{Rank 2: } e_{0}e_{4}e_{8}+e_{1}e_{5}e_{9}\]
 \[\begin{array}{rl}
\ad_T^\fa: &  \left|\begin{smallmatrix}
B_{00}&B_{10}&B_{20}&B_{30}&B_{01}&B_{11}&B_{21}&B_{31}&B_{02}&B_{12}&B_{22}&B_{32}&B_{03}&B_{13}&B_{23}&B_{33}&B \\
\hline\\[.5ex]
0&56&0&0&0&0&147&0&0&0&0&147&56&0&0&0&406\\
0&0&37&0&0&0&0&0&37&0&0&0&0&20&0&0&94\\
0&0&0&0&0&0&0&0&0&1&0&0&0&0&1&0&2\\
0&0&0&0&0&0&0&0&0&0&1&0&0&0&0&0&1\\
0&0&0&0&0&0&0&0&0&0&0&0&0&0&0&0&0\\
\end{smallmatrix}\right|
\end{array}
\]

\[\text{Rank 3: } e_{0}e_{4}e_{8}+e_{1}e_{5}e_{9}+e_{2}e_{6}e_{10}\]
 \[\begin{array}{rl}
\ad_T^\fa: &  \left|\begin{smallmatrix}
B_{00}&B_{10}&B_{20}&B_{30}&B_{01}&B_{11}&B_{21}&B_{31}&B_{02}&B_{12}&B_{22}&B_{32}&B_{03}&B_{13}&B_{23}&B_{33}&B \\
\hline\\[.5ex]
0&84&0&0&0&0&192&0&0&0&0&192&84&0&0&0&552\\
0&0&83&0&0&0&0&0&83&0&0&0&0&57&0&0&223\\
0&0&0&0&0&0&0&0&0&56&0&0&0&0&56&0&112\\
0&0&0&0&0&0&0&0&0&0&56&0&0&0&0&0&56\\
0&0&0&0&0&0&0&0&0&0&0&0&0&0&0&0&0\\
\end{smallmatrix}\right|
\end{array}
\]

\[\text{Rank 4: } e_{0}e_{4}e_{8}+e_{1}e_{5}e_{9}+e_{2}e_{6}e_{10}+e_{3}e_{7}e_{11}\]
 \[\begin{array}{rl}
\ad_T^\fa: &  \left|\begin{smallmatrix}
B_{00}&B_{10}&B_{20}&B_{30}&B_{01}&B_{11}&B_{21}&B_{31}&B_{02}&B_{12}&B_{22}&B_{32}&B_{03}&B_{13}&B_{23}&B_{33}&B \\
\hline\\[.5ex]
0&111&0&0&0&0&219&0&0&0&0&219&111&0&0&0&660\\
0&0&111&0&0&0&0&0&111&0&0&0&0&111&0&0&333\\
0&0&0&0&0&0&0&0&0&111&0&0&0&0&111&0&222\\
0&0&0&0&0&0&0&0&0&0&111&0&0&0&0&0&111\\
0&0&0&0&0&0&0&0&0&0&0&0&0&0&0&0&0\\
\end{smallmatrix}\right|
\end{array}
\]

\[\text{Rank 5: }e_{0}e_{4}e_{8}+e_{1}e_{5}e_{9}+e_{2}e_{6}e_{10}+e_{3}e_{7}e_{11} + abc\] 
 \[\begin{array}{rl}
\ad_T^\fa: &  \left|\begin{smallmatrix}
B_{00}&B_{10}&B_{20}&B_{30}&B_{01}&B_{11}&B_{21}&B_{31}&B_{02}&B_{12}&B_{22}&B_{32}&B_{03}&B_{13}&B_{23}&B_{33}&B \\
\hline\\[.5ex]
 0&135&0&0&0&0&219&0&0&0&0&219&135&0&0&0&708\\
0&0&135&0&0&0&0&0&135&0&0&0&0&135&0&0&405\\
0&0&0&0&0&0&0&0&0&135&0&0&0&0&135&0&270\\
0&0&0&0&0&0&0&0&0&0&135&0&0&0&0&0&135\\
0&0&0&0&0&0&0&0&0&0&0&0&0&0&0&0&0\\
\end{smallmatrix}\right|
\end{array}
\]
 
\[\text{Rank 6: } e_{0}e_{4}e_{8}+e_{1}e_{5}e_{9}+e_{2}e_{6}e_{10}+e_{3}e_{7}e_{11} + a_1b_1c_1 + a_2b_2c_2\] 
 \[\begin{array}{rl}
\ad_T^\fa: &  \left|\begin{smallmatrix}
B_{00}&B_{10}&B_{20}&B_{30}&B_{01}&B_{11}&B_{21}&B_{31}&B_{02}&B_{12}&B_{22}&B_{32}&B_{03}&B_{13}&B_{23}&B_{33}&B \\
\hline\\[.5ex]
 0&141&0&0&0&0&219&0&0&0&0&219&141&0&0&0&720\\
0&0&141&0&0&0&0&0&141&0&0&0&0&141&0&0&423\\
0&0&0&0&0&0&0&0&0&141&0&0&0&0&141&0&282\\
0&0&0&0&0&0&0&0&0&0&141&0&0&0&0&0&141\\
0&0&0&0&0&0&0&0&0&0&0&0&0&0&0&0&0\\
 \end{smallmatrix}\right|\\
\end{array}
\]
 
\caption{Some adjoint rank profiles of tensors in $\CC^4\otimes \CC^4\otimes \CC^4$ viewed as elements of $\fa = \sl_{12}\oplus  \bw{3}\CC^{12}$.}\label{tab:mmult}
\end{table}
We were curious to see if there was a difference in the rank-detection power between $\fa = \sl_{12} \bigoplus_{k=1}^3 \bw{3k}\CC^{12}$ and its parent $\tilde \fa = \sl_{12} \bigoplus_{k=1}^{11} \bw{k}\CC^{12}$. Since the latter algebra is much larger (dimension $4327$), the computations take longer, but we found they're still in reach. However, this matrix is still unable to detect the difference between general among rank 6 and general among rank 7. Since there are 144 blocks in each adjoint operator corresponding to this $\ZZ_{12}$ graded algebra, we only report the ranks of the powers of these operators for each rank of tensor in Table~\ref{tab:fullC12}:
\begin{table}
\begin{tabular}{r| |l|l|l|l|l|l|l|l|}
ad powers & Rank 1: & Rank 2:&Rank 3: & Rank 4:&Rank 5: & Rank 6: &Rank 7: & mmult \\
\hline
$\!\begin{array}{c}
 A\\
 A^2\\
 A^3\\
 A^4\\
 A^5
 \end{array}\!$
 &
 
 $\!\begin{array}{c}
 572\\
 1\\
 0\\
 0\\
 0
 \end{array}\!$
&
$\!\begin{array}{c}
 1\,018\\
 118\\
 14\\
 1\\
 0
 \end{array}\!$
& 
$\!\begin{array}{c}
 1\,368\\
 259\\
 130\\
 56\\
 0
 \end{array}\!$
& 
$\!\begin{array}{c}
 1\,644\\
 381\\
 246\\
 111\\
 0
 \end{array}\!$
& 
$\!\begin{array}{c}
1\,824 \\
453\\
294\\
135\\
 0
 \end{array}\!$
&
$ \!\begin{array}{c}
1\,860\\
471\\
306\\
141\\
 0
 \end{array}\! $
&
$\!\begin{array}{c}
1\,860\\
471\\
306\\
141\\
 0
 \end{array}\! $
&
$\!\begin{array}{c}
1\,812\\
444\\
288\\
132\\
0
\end{array}\! $
\end{tabular}
\caption{Ranks of powers of adjoint operators $A^k$ of various ranks of tensors and the matrix multiplication tensor all living in $(\CC^4)^{\otimes 4} \subset \bw{3}\CC^{12}$ considered as elements of the algebra $\sl_{12} \oplus \bigoplus_{d=1}^{11} \bw{d}\CC^{12}$.}\label{tab:fullC12}
\end{table}


These computations seem to suggest that the matrix multiplication tensor should be somehow below rank 5, which seems contradictory to the well-known result that the border rank of the 2 by 2 matrix multiplication operator is 7. However, the rank semi-continuity arguments actually go in the other direction. If the rank of the linear operator of a given tensor is greater than that for a known tensor rank, then that provides a lower bound for the border rank. But when the rank is higher we get no conclusion unless we could say that the rank conditions on the linear operator were necessary and sufficient for that particular border rank. This appears not to be the case. So all we can conclude is that this tool shows that matrix multiplication has border rank at least 5 (not new) and suggests that it is not generic among those tensors of rank 5. We are curious if the Jordan form for these adjoint operators on this algebra sheds any additional light on the orbit closure issue that arises here. At present, we do not have any additional information to report.
\end{example}

\begin{remark}Note on a 2020 era desktop Macintosh computer running v.1.21 of Macaulay2 we have the following approximate run times: For the matrix multiplication tensor, the adjoint matrix took approximately 4.3s to construct, and the rank profile was computed in 3s. Up until tensor rank 5 building the adjoint matrix takes about 2-3s, and computing the rank profile takes approximately 4-5s. For tensor rank 6 and 7, building the matrix took 6.5s-7, but finding the rank profile took 108s and 360s respectively over $\QQ$, but reducing to $\ZZ_{10000000019}$ gives the same answers in under 3s. The reason for the increase in complexity in computing the rank over $\QQ$ is that for the low-rank tensors, we selected normal forms, which are sparse. However, for higher rank forms, we must choose random rank-1 elements to add on, which produces denser adjoint matrices and, hence, greater accumulation of size of intermediate expressions. Reducing modulo a large prime mitigates the coefficient explosion.
\end{remark}

\begin{example}[Possible semi-simple-like elements for $4\times 4\times 4$ tensors]\label{ex:quasi-semisimple}
There is a special set of elements in $\CC^4 \otimes \CC^4 \otimes \CC^4$ that mimick part of Nurmiev's \cite{nurmiev} classification of elements in $\CC^3 \otimes \CC^3 \otimes \CC^3$ from the Vinberg-Elashvili classification \cite{Vinberg-Elashvili} of trivectors in $\bw{3}\CC^9$. A basis $e_1,\ldots e_{12}$ of $\CC^{12}$, a partition 
\[P = \{\{1,2,3,4\},\{5,6,7,8\}, \{9,10,11,12\}\},\]
and corresponding splitting $\CC^{12} = \CC^4 \oplus \CC^4 \oplus \CC^4 $ induces an inclusion $\CC^4 \otimes \CC^4 \otimes \CC^4 \subset \bw{3} \CC^{12}$. We then ask for the combinatorial design that consists of quadruples of non-intersecting lines in $[12]$, each line with 3 points, each line must contain precisely 1 element from each of the parts of the partition $P$, and no two lines intersecting in more than 1 point. By exhaustion, we have 4 quadruples of lines, which lead to the following basic elements which we call \emph{quasi-semisimple}:
\[
\begin{matrix}
p_1 = e_{1}\otimes e_{5}\otimes e_{9} + e_{2}\otimes e_{6}\otimes e_{10} + e_{3}\otimes e_{7}\otimes e_{11} + e_{4}\otimes e_{8}\otimes e_{12},\\
p_2 = e_{1}\otimes e_{6}\otimes e_{11} + e_{2}\otimes e_{5}\otimes e_{12} + e_{3}\otimes e_{8}\otimes e_{9} + e_{4}\otimes e_{7}\otimes e_{10}, \\
p_3 = e_{1}\otimes e_{7}\otimes e_{12} + e_{2}\otimes e_{8}\otimes e_{11} + e_{3}\otimes e_{5}\otimes e_{10} + e_{4}\otimes e_{6}\otimes e_{9},\\
p_4 = e_{1}\otimes e_{8}\otimes e_{10} + e_{2}\otimes e_{7}\otimes e_{9} + e_{3}\otimes e_{6}\otimes e_{12} + e_{4}\otimes e_{5}\otimes e_{11}.
\end{matrix}
\]
These tensors have the same adjoint rank profiles as those of rank 4 listed in Example~\ref{ex:444}. The elements that come from $\fa_1$ appear to all be nilpotent, and hence, there would be no ad-semisimple elements that live entirely in $\fa_1$. Moreover, these elements do not commute. However, the combinatorial structure seems interesting and could be interesting for further study. 

We also note that the matrix multiplication tensor has an expression that looks like the sum of 2 of these semi-simple-like elements, and moreover, we checked that the adjoint rank profile of $p_i + p_j$ is identical to that of mmult. 

Moreover, like in example~\ref{ex:g36}, we are curious to understand the adjoint elements that are not purely in a single graded piece of $\fa$ and which could be semisimple. A potential candidate could be $h_0 + e_0 e_4 e_8$, with $h_0 $ the first basis vector in the Cartan in $\sl V$, which appears to have constant ranks for powers of the adjoint operator. 
  So it seems that there are ad-semisimple elements in this algebra, just not concentrated in a single grade.
\end{example}

\begin{example}[Trivectors on a 10-dimensional space]\label{ex:wedge3C10}
It is reasonable that this method could handle the $\ZZ_{10}$-graded algebra for $\bw{3}\CC^{10}$.

Here we show that our tools can be constructed for $\bw{3}\CC^{10}$, where $\fa = \g_0 \oplus \cdots \oplus \g_9$, in the order 0, 3, 6, 9, 12= 2, 5, 8, 11= 1, 4, 7, 
with $\g_i = \bw{3*i}\CC^{10}$ ,
 $\g_{3+i} = \bw{3*i-1}\CC^{10}$,
 and 
 $\g_{6+i} = \bw{3*i -2}\CC^{10}$, for $i=1,2,3$. 
 
 We were able to compute adjoint operators in this algebra. There are many blocks, so we do not report the entire block rank adjoint rank profile. Here are the results for the first 5 ranks, the results for rank 5 are the same as for sums of more than 5 rank 1 elements.

 Rank 1:
 $\left\{176,\:1,\:0\right\}$
 
Rank 2:
$\left\{322,\:94,\:14,\:1,\:0\right\}$ 

Rank 3:
$\left\{444,\:223,\:130,\:56,\:0\right\}$

Rank 4:
$\left\{536,\:294,\:178,\:79,\:0\right\}$

Rank 5:
$\left\{566,\:337,\:218,\:99,\:0\right\}$

The point of this example is to say that computations in this algebra are in the computable regime, and we encourage future research here.
\end{example}

\subsection{Connection to geometric constructions}
The preceding two examples seem relevant for many other geometric constructions, such as K3 surfaces, K\"ahler, and hyper-K\"ahler manifolds, and more, which we explain briefly now.

Hitchen \cite{hitchin2001stable} was interested in the existence of special $G$-structures, metrics with special holonomy.
A $p$-form is to be considered \emph{stable} if it lies in an open orbit. The existence of open orbits is the classification of prehomogeneous vector spaces \cite{sato1977classification}, which relies on the so-called castling transforms (see \cite{venturelli2019prehomogeneous} for a modern treatment in the case of tensors). A strong necessary condition is that the group $G$ must have dimension at least that of the vector space $W$ (in our case $W = \bw p V$ with $\dim V = n$) for there to be an open orbit. Hitchen considered the prehomogeneous cases $(n,p)$ (with  $n=\dim V$), $(2m, 2); (6,3), (7,3), (8,3),$ and their duals. Stable $p$-forms determine interesting $G$-structures in these cases.

 After this we can consider \emph{semi-stable} $p$-forms, which are those which do not contain $0$ in their orbit. Semi-stable $p$-forms govern many interesting geometric objects (K3 surfaces, K\"ahler, and hyper-K\"ahler manifolds, genus 2 curves, the Coble cubic), see for instance \cite{rains2018invariant, bernardara2021nested, benedetti2023hecke, bernardara2024even, swann1990hyperkahler} which explore connections to tensors in $\bw 3 \CC^9$.
  
This notion of semi-stability is the same as the notion of a form being not \emph{nilpotent}. In the case where Jordan decomposition exists for $\bw p V$ we can study the nicest semi-stable forms to be those that are semi-simple. A slight relaxation of this concept which we advocate for is when we may not have a Jordan decomposition for elements of $\bw p V$, but we still have GJD, and we can ask for forms that are Ad-semi-simple, i.e. elements of $T \in \bw p V$  such that $\ad(T)$ is semi-simple. We find it interesting to note that, as seen in Example \ref{ex:wedge3C10} it seems that every element of $\bw{3}\CC^{10}$ is ad-nilpotent, and our experiments suggest likewise for $\bw{3}\CC^{11}$ and $\bw 3 \CC^{12}$.

For those cases, in Example~\ref{ex:quasi-semisimple} we suggest a possible replacement for the notion of ad-semisimple in this last case. It would be interesting to understand the implications for geometry for these cases where it seems that the traditional notions of semi-stable and semi-simple $3$-forms or $4$-forms may not be possible, and we ask specifically if the notion of quasi-semisimple could be a meaningful replacement.

Finally, we remark that we found that elements of $\bw 4 \CC^{10}$, that are not ad-nilpotent, and we find that 4-vectors like these could be an interesting place to start a new investigation of semi-stable $4$-forms on a 10-dimensional space. In that case on a 2018 mac laptop it takes approximately one minute to compute adjoint operators when working over a field like $\ZZ_{1009}$.
For example, the form
\[e_{0}e_{2}e_{3}e_{7}+e_{1}e_{3}e_{6}e_{8}+e_{0}e_{4}e_{6}e_{8}+e_{2}e_{4}e_{5}e_{9}+e_{1}e_{5}e_{7}e_{9}\]
is not ad-nilpotent; it has the following rank profile:
\[\left|\begin{smallmatrix}
B_{00}&B_{10}&B_{20}&B_{30}&B_{40}&B_{01}&B_{11}&B_{21}&B_{31}&B_{41}&B_{02}&B_{12}&B_{22}&B_{32}&B_{42}&B_{03}&B_{13}&B_{23}&B_{33}&B_{43}&B_{04}&B_{14}&B_{24}&B_{34}&B_{44}& B
\\[.5ex]
\hline\\[.5ex]
       0&88&0&0&0&0&0&45&0&0&0&0&0&26&0&0&0&0&0&45&88&0&0&0&0&292\\
       0&0&30&0&0&0&0&0&26&0&0&0&0&0&26&30&0&0&0&0&0&88&0&0&0&200\\
       0&0&0&16&0&0&0&0&0&26&16&0&0&0&0&0&30&0&0&0&0&0&30&0&0&118\\
       0&0&0&0&16&16&0&0&0&0&0&16&0&0&0&0&0&6&0&0&0&0&0&16&0&70\\
       6&0&0&0&0&0&16&0&0&0&0&0&6&0&0&0&0&0&6&0&0&0&0&0&16&50\\
       0&6&0&0&0&0&0&6&0&0&0&0&0&6&0&0&0&0&0&6&6&0&0&0&0&30\\
       0&0&6&0&0&0&0&0&6&0&0&0&0&0&6&6&0&0&0&0&0&6&0&0&0&30\\
       0&0&0&6&0&0&0&0&0&6&6&0&0&0&0&0&6&0&0&0&0&0&6&0&0&30
\end{smallmatrix}\right|\]
As in \cite{oeding2022} we can make a Carter diagram recording the numbers of coincidences (recorded by the labels on the edges) in the indices on the basic 4-forms, which is the following for the candidate above:
\[
\xymatrix{
\ccircle{1368} \ar@{=}_{68}[dd] \ar@{-}^1[rr]&&\ccircle{1579}\ar@{=}^{59}[dd]\\
& \ccircle{0237}\ar@{-}_3[ul] \ar@{-}^7[ur] \ar@{-}_2[dr] \ar@{-}^0[dl]
\\
\ccircle{0468}\ar@{-}_4[rr] && \ccircle{2459}
}
\]
The Carter diagram may suggest ways of generalizing and constructing new interesting examples of forms that are not ad-nilpotent.

\section{Applications to quantum information}
The classical problem of classifying and distinguishing $G$-orbits on a module $M$ has regained popularity in the context of quantum information theory in the past twenty years. In this section, we explain the connection and how our Jordan decomposition can be used as an efficient tool to study entanglement. While the examples we consider have been addressed before, our point of departure is that we handle all these by a single construction (an adjoint operator) and straightforward computations (matrix ranks and eigenvalues).
A standard reference for quantum information is \cite{nielsen2002quantum}. For an introduction for mathematicians, see \cite{Landsberg2019}, and for an algebraic geometry perspective on entanglement classification, see \cite{holweck_entanglement}.
\subsection{General theory}
In quantum computation, information is encoded in quantum states. More precisely, the quantum analog of the classical bit $\{0,1\}$ in information theory is a quantum-bit or qubit. In Dirac's notation, a qubit reads
\begin{equation}
 \ket{\psi}=\alpha\ket{0}+\beta\ket{1}=\alpha\begin{pmatrix} 
 1 \\ 0
 \end{pmatrix}+\beta\begin{pmatrix}
 0\\1
 \end{pmatrix}=\begin{pmatrix}
 \alpha\\
 \beta
 \end{pmatrix}\in \CC^2, \text{with }|\alpha|^2+|\beta|^2=1.
\end{equation}
Mathematically, a qubit is just a normalized vector in $\CC^2$. The idea is that with the resources of quantum physics, the information could be given by a classical state of the system, state $\ket{0}$ or $\ket{1}$, but also in a superposition of the basis states. A system made of $n$ qubits will be a normalized tensor $\ket{\psi}\in (\CC^2)^{\otimes n}$. Normalized tensors of rank $1$ are said to be {\em separable} while all other states are said to be \emph{entangled}. From a physical point of view, a separable state, $\psi=\psi_1\otimes\psi_2\otimes \dots \otimes \psi_n$, can be fully described from the knowledge of one of its components, while entangled states share non-classical correlations among their different components. This property of entanglement is a non-classical resource in quantum information that can be exploited to perform non-classical tasks (quantum teleportation, super-dense coding). It is also considered a resource that could explain the speed-up of some quantum algorithms over their classical counterparts. 

There are different frameworks in which to study the entanglement classification problem. One such is to consider equivalence of quantum states up to Stochastic Local Operations with Classical Communication (SLOCC). The quantum physics operations corresponding to the SLOCC group include local unitary transformations, measurements and classical coordinations. This set of operations comprising the SLOOC group is the group of local invertible transformations \cite{dur2000}. The following cases are of interest to the quantum community. We denote by $M$ the module (also called a Hilbert space in quantum physics) where the quantum states live, and $G$ the corresponding SLOCC group:
\begin{enumerate}
 \item $M=(\CC^2)^{\otimes n}$, the Hilbert space of $n$-qubit quantum states, with $G=\SL_2(\CC)^{\times n}$. In the projective space $\PP M$, the variety of separable states is the Segre variety $X=\PP^1\times\dots\times \PP^1\subset \PP M$.
 \item $M=S^k \CC^n$, the Hilbert space of $k$-symmetric (bosonic) quantum states, with $G=\SL_n(\CC)$. Here, a single boson in a normalized vector of $\CC^n$. In the projective space $\PP M$, the variety of separable states is the Veronese variety $X=v_k(\PP^{n-1})\subset \PP(S^k \CC^n)$.
 \item $M=\bw{k} \CC^n$, the Hilbert space of $k$ fermions where a single fermion is a $n$-state particle, i.e., a normalized vector of $\CC^n$, with $G=\SL_n(\CC)$. In the projective space $\PP M$, the variety of separable states is the Grassmann variety $X=\Gr(k,n)\subset \PP(\bigwedge^k \CC^n)$. \end{enumerate}

\subsection{Application to QI for small numbers of qubits}

\subsubsection{3-qubits} 
A very famous case in quantum information is the three-qubit SLOCC classification, i.e., the $G=\SL_2\CC\times \SL_2\CC\times \SL_2\CC$ orbits of normalized tensors in $\CC^2\otimes\CC^2\otimes \CC^2$. Even though this classification was mathematically known for a long time \cite{lepaige81, GKZ}, the result got a lot of attention in the context of quantum physics as it proved for the first time that quantum states could be genuinely entangled in two non-equivalent ways \cite{dur2000}. The orbit structure of the three-qubit case is similar to other tripartite systems like the three-bosonic qubit ($v_3 \PP^1\subset \PP^4$) or three-fermion with $6$-single particle states ($\Gr(3,6)\subset \PP \bigwedge^3\CC^6$). The rank profile allows us to easily distinguish those orbits because those orbits (up to qubit permutation) have different dimensions; see Example~\ref{ex:g36}.

\subsubsection{4-qubits} Another important classification is the four-qubit classification, i.e., $M=(\CC^2)^{\otimes 4}$ and $G=(\SL_2\CC)^{\times 4}$. Here, the number of orbits is infinite, and a description depends on parameters. A classification was first established by Verstraete et al. \cite{verstraete02} and later corrected by \cite{ChtDjo:NormalFormsTensRanksPureStatesPureQubits}, see also \cite{HolweckLuquePlanat}. A complete and irredundant classification was given by Dietrich et al. \cite{ dietrich2022classification}.

Note that $\sl_8 \oplus \bw{4} \CC^8$ contains $\sl_2^{\oplus 4} \oplus (\CC^2)^{\otimes 4}$ as a subalgebra, and we can utilize the rank profiles in $\sl_8 \oplus \bw{4} \CC^8$ to distinugish orbits.
Though the complete irredundant classification from \cite{dietrich2022classification} is more complicated to demonstrate proof of concept we only handle random elements from the 9-family classification \cite{ChtDjo:NormalFormsTensRanksPureStatesPureQubits}.
We constructed the adjoint operators for random tensors (generic choice of parameters) of each format from the Verstraete classification and made random choices for those parameters for elements in the $9$ families (which are named 1, 2, 3, 6, 9, 10, 12, 14, 16 in \cites{ChtDjo:NormalFormsTensRanksPureStatesPureQubits}),
 and computed their rank profiles, which allows us to distinguish the different families (Table~\ref{tab:ChtDjo}). We also note that while the drops in ranks are related to the Jordan form associated with the 0-eigenspace, the final ranks in the block rank profiles distinguish the semi-simple parts. See \cite{dietrich2022classification} for further discussion on the semi-simple and nilpotent parts of these orbits.
 \begin{table}[htbp]
\begin{tabular}[t]{lll}
Family & Form & Rank Profile 
\\
\hline
\\[-2ex]
1. & \begin{tabular}{l}$\frac{a+d}{2}(\ket{0000} + \ket{1111}) + \frac{a-d}{2}(\ket{0011} + 
\ket{1100})$ \\
 $+\frac{b+c}{2}(\ket{0101}+\ket{1010})
+\frac{b-c}{2}(\ket{0110} + \ket{1001})$ \end{tabular}
& 
$ \left|\begin{smallmatrix}
 B_{00} & B_{01} & B_{10}& B_{11}& B \\[.5ex] 
\hline\\[.5ex]
0&60&60&0&120\\
60&0&0&60&120\\
0&60&60&0&120\\
60&0&0&60&120
\end{smallmatrix}\!\right|$
\\\\
2. & \begin{tabular}{l}  
$\frac{a+c-i}{2}(\ket{0000}+\ket{1111})+\frac{a-c+i}{2}(\ket{0011}+\ket{1100})$ \\ 
$+\frac{b+c+i}{2}(\ket{0101}+\ket{1010})+\frac{b-c-i}{2}(\ket{0110}+\ket{1001})$ \\  $
+\frac{i}{2}(\ket{0001}+\ket{0111}+\ket{1000}+\ket{1110} $ \\  
$-\ket{0010}-\ket{0100}-\ket{1011}-\ket{1101})$ 
\end{tabular}
&
$ \left|\begin{smallmatrix}
 B_{00} & B_{01} & B_{10}& B_{11}& B \\[.5ex] 
\hline\\[.5ex]
0&60&60&0&120\\
59&0&0&60&119\\
0&59&59&0&118\\
59&0&0&59&118\\
0&59&59&0&118
\end{smallmatrix}\!\right|$
\\\\
3. &
\begin{tabular}{l}  $ \frac{a}{2}(\ket{0000}+\ket{1111} +\ket{0011}+\ket{1100}) + \frac{b+1}{2}(\ket{0101}+\ket{1010})$ \\
$ +\frac{b-1}{2}(\ket{0110}+\ket{1001})+\frac{1}{2}(\ket{1101}+\ket{0010}-\ket{0001}-\ket{1110})$ 
\end{tabular}
&
$ \left|\begin{smallmatrix}
 B_{00} & B_{01} & B_{10}& B_{11}& B \\[.5ex] 
\hline\\[.5ex]
0&56&56&0&112\\
52&0&0&54&106\\
0&50&50&0&100\\
50&0&0&50&100\\
0&50&50&0&100
\end{smallmatrix}\!\right|$
\\\\
6. 
&
\begin{tabular}{l}  $\frac{a+b}{2}(\ket{0000}+\ket{1111})+b(\ket{0101}+\ket{1010})+i(\ket{1001}-\ket{0110})$ \\ 
 $+\frac{a-b}{2}(\ket{0011}+\ket{1100})+\frac{1}{2}(\ket{0010}+\ket{0100}+\ket{1011}+\ket{1101}$ \\  
$-\ket{0001}-\ket{0111}-\ket{1000}-\ket{1110})$ 
\end{tabular}
&
$ \left|\begin{smallmatrix}
 B_{00} & B_{01} & B_{10}& B_{11}& B \\[.5ex] 
\hline\\[.5ex]
0&60&60&0&120\\
58&0&0&60&118\\
0&58&58&0&116\\
57&0&0&58&115\\
0&57&57&0&114\\
57&0&0&57&114\\
0&57&57&0&114
\end{smallmatrix}\!\right|$
\\\\
9. 
&\begin{tabular}{l}  
$a(\ket{0000}+\ket{0101}+\ket{1010}+\ket{1111})$ \\ 
 $-2i(\ket{0100}-\ket{1001}-\ket{1110})$
 \end{tabular}
&
$ \left|\begin{smallmatrix}
 B_{00} & B_{01} & B_{10}& B_{11}& B \\[.5ex] 
\hline\\[.5ex]
0&56&56&0&112\\
51&0&0&54&105\\
0&49&49&0&98\\
45&0&0&47&92\\
0&43&43&0&86\\
42&0&0&43&85\\
0&42&42&0&84\\
42&0&0&42&84\\
0&42&42&0&84
\end{smallmatrix}\!\right|$
\\\\
10.&
\begin{tabular}{l}   $ \frac{a+i}{2}(\ket{0000}+\ket{1111} 
+\ket{0011}+\ket{1100})+\frac{a-i+1}{2}(\ket{0101}+\ket{1010})$ \\ 
 $ +\frac{a-i-1}{2}(\ket{0110}+\ket{1001})+\frac{i+1}{2}(\ket{1101}+\ket{0010})$ \\ 
  $+\frac{i-1}{2}(\ket{0001}+\ket{1110}) -\frac{i}{2}(\ket{0100}+\ket{0111}+\ket{1000}+\ket{1011})$ 
  \end{tabular}
&
$ \left|\begin{smallmatrix}
 B_{00} & B_{01} & B_{10}& B_{11}& B \\[.5ex] 
\hline\\[.5ex]
0&48&48&0&96\\
39&0&0&42&81\\
0&33&33&0&66\\
33&0&0&33&66\\
0&33&33&0&66
\end{smallmatrix}\!\right|$
\\\\
12. 
&
\begin{tabular}{l}  $(\ket{0101}-\ket{0110}+\ket{1100}+\ket{1111})+(i+1)(\ket{1001}+\ket{1010})$ \\
 $-i(\ket{0100}+\ket{0111}+\ket{1101}-\ket{1110})$ 
 \end{tabular}
&
$ \left|\begin{smallmatrix}
 B_{00} & B_{01} & B_{10}& B_{11}& B \\[.5ex] 
\hline\\[.5ex]
0&48&48&0&96\\
38&0&0&42&80\\
0&32&32&0&64\\
23&0&0&26&49\\
0&17&17&0&34\\
8&0&0&11&19\\
0&2&2&0&4\\
1&0&0&2&3\\
0&1&1&0&2\\
0&0&0&1&1\\
0&0&0&0&0\\
\end{smallmatrix}\!\right|$
\\\\
14.& 
\begin{tabular}{l}  
$ \frac{i+1}{2}(\ket{0000}+\ket{1111}-\ket{0010}-\ket{1101})$ \\ 
$ +\frac{i-1}{2}(\ket{0001}+\ket{1110}-\ket{0011}-\ket{1100})$ \\ 
$ +\frac{1}{2}(\ket{0100}+\ket{1001}+\ket{1010}+\ket{0111})$ \\ 
$+\frac{1-2i}{2}(\ket{1000}+\ket{0101}+\ket{0110}+\ket{1011})$ 
\end{tabular}
&
$ \left|\begin{smallmatrix}
 B_{00} & B_{01} & B_{10}& B_{11}& B \\[.5ex] 
\hline\\[.5ex]
0&47&47&0&94\\
30&0&0&34&64\\
0&17&17&0&34\\
9&0&0&10&19\\
0&2&2&0&4\\
0&0&0&2&2\\
0&0&0&0&0\\
\end{smallmatrix}\!\right|$
\\\\
16. 
&
\begin{tabular}{l}  $\frac{1}{2}(\ket{0}+\ket{1})\otimes(\ket{000}+\ket{011}+\ket{100}+\ket{111}$ \\
 $ +i(\ket{001}+\ket{010}-\ket{101}-\ket{110}))$ 
 \end{tabular}
&
$ \left|\begin{smallmatrix}
 B_{00} & B_{01} & B_{10}& B_{11}& B \\[.5ex] 
\hline\\[.5ex]
0&33&33&0&66\\
14&0&0&20&34\\
0&1&1&0&2\\
1&0&0&0&1\\
0&0&0&0&0\\
\end{smallmatrix}\!\right|$
\end{tabular}
\caption{ 
$\text{SLOCC}^*$--orbits,  \cite{ChtDjo:NormalFormsTensRanksPureStatesPureQubits}
with (random) $a= 1,b = 2,c = \frac{8}{5},d =\frac{1}{2}$.
}\label{tab:ChtDjo}
\end{table}

Setting all parameters ($a,b,c,d$) in the generic expressions to $0$, one obtains representatives of the $9$ orbits of the null-cone (up to qubit permutation). Here again, those orbits can also be distinguished by the rank profile. It furnishes an efficient and easy way to identify the class of a nilpotent element. An algorithm based on invariants and covariants was published in the quantum information literature to classify four-qubit nilpotent element \cite{HLT14_atlas} and later implemented in the study of Grover's quantum search algorithm and Shor's quantum algorithm \cite{JH19}. Here, because the nilpotent orbits of the four-qubit case have different dimensions, the rank profile can be used as an alternative to distinguish the nilpotent orbits (up to qubit permutation), see Table~\ref{tab:ChtDjo}. 

We computed that the root profile of the adjoint operators in $\sl_2^{\times 4} \oplus (\CC^2)^{\otimes 4}$ (which are $28\times 28$ matrices)
 for the states that are generic semi-simple in \cite{dietrich2022classification}, denoted $G_{abcd}$ (in \cite{verstraete02}) or family 1 \cite{ChtDjo:NormalFormsTensRanksPureStatesPureQubits}, is made of $24$ single roots and $0$ with order $4$. This is a direct consequence of the fact that generic elements $G_{abcd}$ are the elements of $(\CC^2)^{\otimes 4}$ that do not annihilate the $2\times 2\times 2\times 2$ Cayley hyperdeterminant, a degree $24$ invariant polynomial. Recall that Cayley hyperdeterminant is the defining equation of the dual of $X=(\PP^1)^4\subset \PP((\CC^2)^{\otimes 4})$. As shown in \cite{holweck_4qubit2} it is the restriction to $(\CC^2)^{\otimes 4}$ of the adjoint discriminant $\Delta_{\mathfrak{g}}$ for $\mathfrak{g}=\mathfrak{s}\mathfrak{o}(8)$. The adjoint discriminant of a Lie algebra $\mathfrak{g}$ \cite[p.~$4605$]{TevelevJMS} is
\begin{equation}
\Delta_{\mathfrak{g}}(x)=\Delta\left(\frac{1}{t^n}\chi_{\ad_{x}}\right),
\end{equation} 
where $n$ is the rank of the Lie algebra and $\Delta$ is the discriminant with respect to $t$. Here we consider again the algebra $\sl_8 \oplus \bw{4} \CC^8$ which contains $\sl_2^{\oplus 4} \oplus (\CC^2)^{\otimes 4}$. In particular, if one restricts to $x=T\in M=(\CC^2)^{\otimes 4}$, the adjoint root profile should be made of $24$ single roots (and $0$ with order $4$) for tensors $T$ outside the zero locus of the dual variety. In other words, when $\mathfrak{g}\oplus M$ is a Lie algebra, the adjoint root profile allows us to decide if a tensor $T$ belongs to the dual variety of the highest weight vector $G$-orbit of $M$, $X_{G, M}=G.[v]\subset \PP(M)$.
Note that when $\mathfrak{g}\oplus M$ is not a Lie algebra, like in Example~\ref{ex:g36} for $\bw{3}\CC^6$, we still have a root profile, but the connection with the dual of the adjoint variety is not clear at present.

\subsubsection{5 qubits}
For larger quantum systems, few results are known. The $5$-qubit classification is considered intractable because the number of orbits is infinite, as is the number of nilpotent orbits. However, our method already provides a criterion to decide if two SLOCC orbits are distinct. Indeed, for a Hilbert space $M$ with SLOCC group $G$ and corresponding Lie algebra $\mathfrak{g}$, then quantum states $\psi_1,\psi_2\in M$ are not SLOCC equivalent if the rank profiles of $\ad_{\psi_1}$ and $\ad_{\psi_2}$ are distinct.
In \cite{Osterloh06}, the notion of entanglement filter is used to distinguish some five qubit states $M=(\CC^2)^{\otimes 5}$. We list these states and their adjoint rank profiles in Table~\ref{tab:Osterloh}.
The fact that these $5$-qubit quantum states are not SLOCC equivalent was also obtained by \cite{LT05} using covariants. The rank profiles in Table~\ref{tab:Osterloh} lead to the same conclusion.

As already noted, $\sl_{10} \oplus \bw{ 5} \CC^{10}$ does not have a Lie algebra structure, but it does have GJD. 
Notice that $\CC^{10} = (\CC^2) ^{\oplus 5}$, and correspondingly $ (\CC^2)^{\otimes 5}\subset \bw5 \CC^{10}$. 
This set of tensors is realized by grouping basis vectors $\{e_i, e_{i+5}\} = \CC^2_i$. Therefore, we can consider the subalgebra and the Jordan decomposition for the $\ZZ_2$-graded algebra we have constructed without any more effort. 

As seen in Table~\ref{tab:C2o5}, the rank profiles appear to detect border ranks up to 4 but not higher. In \cite{OedingSam}, we proved that the set of border rank 5 tensors is defined by invariants of degrees 6 and 16. The trace power invariants of this adjoint form cannot produce these invariants for border rank 5 since the block structure of the adjoint operator implies that the only powers that can have a non-zero trace are multiples of 4, and the degree 6 invariant cannot be produced this way. This feature is also apparent in the characteristic polynomials in Table~\ref{tab:C2o5}. 

Here is an analogy to the Vinberg cases, which we label as a proposition for later reference even if the idea is still in its initial stages.
\begin{prop}
The algebra $\fa = \sl_{10} \oplus \bw{ 5} \CC^{10}$ has a 10-dimensional Cartan-like subalgebra that lives entirely in $(\CC^2)^{\otimes 5}\subset \bw 5\CC^{10}$. A set of basic almost ad-semisimple elements is the following:
\[
\begin{matrix}
 p_{0,\pm} = e_{1}e_{2}e_{4}e_{6}e_{8} \pm e_{0}e_{3}e_{5}e_{7}e_{9},&
 p_{1,\pm} = e_{0}e_{3}e_{4}e_{6}e_{8} \pm e_{1}e_{2}e_{5}e_{7}e_{9},\\
 p_{2,\pm} = e_{0}e_{2}e_{5}e_{6}e_{8} \pm e_{1}e_{3}e_{4}e_{7}e_{9},&
 p_{3,\pm} = e_{0}e_{2}e_{4}e_{7}e_{8} \pm e_{1}e_{3}e_{5}e_{6}e_{9},\\
 p_{4,\pm} = e_{1}e_{3}e_{5}e_{7}e_{8} \pm e_{0}e_{2}e_{4}e_{6}e_{9}.
\end{matrix}
\]
\end{prop}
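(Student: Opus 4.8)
The plan is to check, in turn, that the ten listed vectors span a $10$-dimensional subspace $\h$ of $(\CC^2)^{\otimes 5}\subset\bw{5}\CC^{10}$, that $\h$ is ``abelian'' in the senses appropriate to a non-Lie algebra, and that each $p_{i,\pm}$ (and, generically, every element of $\h$) is almost ad-semisimple; the maximality implicit in the word ``Cartan'' is the step I expect to be hardest.

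\emph{Dimension and placement.} Two facts are visible directly from the listed indices: in each $p_{i,\pm}$ the two length-five wedge monomials have complementary index sets (their union is $\{0,1,\dots,9\}$), so once one fixes the pairing of $\CC^{10}=(\CC^2)^{\oplus 5}$ used to embed $(\CC^2)^{\otimes 5}\subset\bw{5}\CC^{10}$, each $p_{i,\pm}$ is a GHZ-type vector $|b\rangle\pm|\overline{b}\rangle$ with $b\in\{0,1\}^{5}$ of Hamming weight one and $\overline{b}$ its complement; and the ten monomials occurring across $p_{0,\pm},\dots,p_{4,\pm}$ are pairwise distinct. The second fact makes the ten vectors linearly independent, so $\h:=\span\{p_{i,\pm}\}$ is $10$-dimensional and, by construction, sits inside $(\CC^2)^{\otimes 5}$; in the qubit picture $\h$ is spanned by the five weight-one and the five weight-four basis states.

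\emph{Abelianness.} Because $\h\subset\fa_1$ and the bracket sends $\fa_1\times\fa_1$ into $\fa_0=\sl_{10}$ with $\fa_0\cap\fa_1=0$, the assertion that $\h$ is a subalgebra is \emph{equivalent} to $[p,p']=0$ for every pair $p,p'$ among the ten vectors; separately one wants the adjoint operators to commute, $[\ad_p,\ad_{p'}]=0$ in $\End(\fa)$, which — absent the Jacobi identity — is not a formal consequence of the first and is what yields a decomposition of $\fa$ into joint generalized eigenspaces for $\h$. I would compute both with the scale-unique bracket: by Lemma~\ref{lem:dualBrackets} the product $\bw{5}\CC^{10}\times\bw{5}\CC^{10}\to\sl_{10}$ is unique up to a scalar and is commuting ($k=5$ being odd), so I may work with a concrete model — using the $\SL$-invariant identification $\bw{10}\CC^{10}\cong\CC$ to turn one factor into $\bw{5}(\CC^{10})^{*}$ and then the $\SL_{10}$-equivariant four-fold contraction $\bw{5}(\CC^{10})^{*}\otimes\bw{5}\CC^{10}\to\sl_{10}$ (contract four indices, remove the trace). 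On wedge monomials $e_S,e_T$ (five-element subsets) this bracket is nonzero only when $|S\cap T|\ge 4$, and for $|S\cap T|=4$ it is a single elementary matrix determined by the two ``odd'' indices together with the complement of $S$; expanding $[\,e_S\pm e_{S^{c}},\,e_T\pm e_{T^{c}}\,]$ into its four cross terms and tracking wedge signs, I would verify term-by-term cancellation for the supports arising here, and similarly that the corresponding explicit matrices $\ad_p$ commute. The identities $[p_{i,\pm},p_{i,\pm}]=0$ carry genuine content, since the bracket is symmetric rather than alternating.

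\emph{Almost ad-semisimplicity, and the main obstacle.} For each $p=p_{i,\pm}$ one must show the adjoint rank profile is constant, equivalently $\ker(\ad_p)=\ker(\ad_p^{2})$, so $\ad_p$ has no nontrivial Jordan block at the eigenvalue $0$; ideally one also shows a generic element of $\h$ is genuinely ad-semisimple, so that $\h$ plays the role of a Cartan subspace. Conceptually I would look for a grading of $\fa$ — a one-parameter subgroup of $\SL_{10}$, or an $\sl_2$-like triple inside $\fa$ in the spirit of Morozov's theorem — diagonalizing $\ad_p$ on the submodule where its nilpotent part could live; in practice one builds $\ad_p$ as the $351\times 351$ matrix it is in the \texttt{ExteriorExtensions} package and compares $\rank(\ad_p^{k})$ for successive $k$, then inspects the characteristic polynomial. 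The hard step is the word ``Cartan'': that $\h$ is \emph{maximal} among abelian subspaces of $\fa_1$ consisting of (almost) ad-semisimple elements — i.e.\ that the ``rank'' of this graded algebra is $10$ — cannot be imported from Vinberg's theory, since $\fa=\sl_{10}\oplus\bw{5}\CC^{10}$ is not a Lie algebra, so neither conjugacy of Cartan subspaces nor a Chevalley-type restriction theorem for $\CC[\fa_1]^{G}$ is available off the shelf. I would attempt maximality by a direct dimension count — showing that for generic $z\in\h$ the joint centralizer $\{\,w\in\fa_1:[\ad_z,\ad_w]=0\,\}$, intersected with the almost-semisimple locus, equals $\h$ — but absent such a computation the maximality should be read as conjectural, consistent with the authors' remark that the idea is in its initial stages.
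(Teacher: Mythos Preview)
Your proposal misreads both qualifiers in the statement---``Cartan-\emph{like}'' and ``\emph{almost} ad-semisimple''---and the two checks you plan would actually fail.

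\textbf{The self-brackets do not vanish.} You write that ``the identities $[p_{i,\pm},p_{i,\pm}]=0$ carry genuine content'' and intend to verify them. In fact the paper computes $[p_{i,+},p_{i,+}]=\sum_{j}\pm h_{j}\neq 0$ (a nonzero diagonal element of $\sl_{10}$). Since the bracket $\fa_1\times\fa_1\to\fa_0$ is symmetric here, the self-bracket of a monomial $e_A$ need not vanish; your own contraction model gives a moment-map-type diagonal term when $S=T$. Consequently $\h$ is \emph{not} closed under the bracket and is not ``abelian'' in the sense you set up: for general $x=\sum c_ip_i$, $y=\sum d_ip_i$ one gets $[x,y]=\sum_i c_id_i[p_i,p_i]\neq 0$. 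What the paper actually verifies is the much weaker condition $[a,b]=0$ for \emph{distinct} basis match vectors $a\neq b$ (and $[p_{i,+},p_{i,-}]=0$), and it explicitly flags that $[a,a]\neq 0$; the word ``Cartan-like'' is meant to carry this caveat, not to assert a genuine abelian subalgebra.

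\textbf{The rank profile is not constant.} You plan to show $\ker(\ad_p)=\ker(\ad_p^{2})$. The paper's computation gives the opposite: for each $p_{i,\pm}$ the $0$-eigenspace has geometric multiplicity $249$ but algebraic multiplicity $251$, and indeed $p_{i,-}\in\ker(\ad_{p_{i,+}})^{2}\setminus\ker(\ad_{p_{i,+}})$ while $[p_{i,+},p_{i,+}]\in\ker(\ad_{p_{i,+}})^{3}\setminus\ker(\ad_{p_{i,+}})^{2}$, so the rank sequence drops $102,101,100$ before stabilizing. ``Almost ad-semisimple'' means exactly this: the four nonzero eigenvalues (the $4$th roots of a constant) have matching algebraic and geometric multiplicities $25$, but there is a small nilpotent contribution at $0$. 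Your plan to prove genuine semisimplicity at $0$ would fail.

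Your caution about maximality is well placed: the paper does not prove that $\h$ is maximal in $\fa_1$, only that no further basic match vector can be adjoined while preserving $[a,b]=0$ for $a\neq b$. The paper's argument is otherwise entirely computational---rank profiles, characteristic polynomials, and eigenspace dimensions computed in the \texttt{ExteriorExtensions} package---rather than the structural contraction analysis you sketch.
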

\begin{proof}
First, a word of warning on notation. By \defi{Cartan-like} subalgebra, we mean an abelian subalgebra $\mathfrak{h} \subset \fa$ of almost diagonalizable elements.
Since the bracket is commuting on this grade 1 piece of $\fa$, it is not a restriction to ask that this set be abelian. So we asked, instead, if $[a,b]=0$ for any $a$ and $b$ independent elements of our basis of match vectors, but we don't have, for instance, $[a, a] = 0$ necessarily, so the algebra $\mathfrak{h}$ is not nilpotent. We noticed that for basis elements $x$ we have $[x,x] = \sum_{i=1}^5 \pm h_i \neq 0$. 

The almost diagonalizability is that the dimension of the $0$-eigenspace is 249 versus the algebraic multiplicity of 251, but the other 4 eigenspaces have the correct dimension of 25 each. The extra kernel seems to be due to the following facts:
\begin{itemize}
\item $[p_{i,+}, p_{i,-}] = 0$ and $p_{i,-}$ is in the kernel of $(\ad_{p_{i,+}})^2$ but not the kernel of $(\ad_{p_{i,+}})$,
\item $[p_{i,+}, p_{i,+}] = k:= \sum_{i=1}^5 \pm h_i $ is in the kernel of $(\ad_{p_{i,+}})^3$ but not the kernel of $(\ad_{p_{i,+}})^2$.
\end{itemize}

Also, we did not prove that this subalgebra is maximal in $\fa$. We will explain in what sense the algebra we construct is maximal.
Let us consider some elements that have a chance to be basic semisimple elements, by analogy to the $\bw{4}\CC^8$ and the $\bw3\CC^9$ cases, whose basic semisimple elements are governed by certain combinatorial designs (respectively Steiner quadruple and triple systems). The bipartition $\{\{0,1,2,3,4\}, \{5,6,7,8,9\}\}$ indicates a copy of $(\CC^2)^{\otimes 5}$ in $\bw{5}\CC^{10}$. 
The 16 matchings on the bipartite graph associated to that bipartition are the following.
\[\begin{matrix}
\left\{\left\{0,\:2,\:4,\:6,\:8\right\},\:\left\{1,\:3,\:5,\:7,\:9\right\}\right\}, &
 \left\{\left\{0,\:2,\:4,\:6,\:9\right\},\:\left\{1,\:3,\:5,\:7,\:8\right\}\right\},\\
 \left\{\left\{0,\:2,\:4,\:7,\:8\right\},\:\left\{1,\:3,\:5,\:6,\:9\right\}\right\}, &
 \left\{\left\{0,\:2,\:4,\:7,\:9\right\},\:\left\{1,\:3,\:5,\:6,\:8\right\}\right\},\\
 \left\{\left\{0,\:2,\:5,\:6,\:8\right\},\:\left\{1,\:3,\:4,\:7,\:9\right\}\right\}, &
 \left\{\left\{0,\:2,\:5,\:6,\:9\right\},\:\left\{1,\:3,\:4,\:7,\:8\right\}\right\},\\
 \left\{\left\{0,\:2,\:5,\:7,\:8\right\},\:\left\{1,\:3,\:4,\:6,\:9\right\}\right\}, &
 \left\{\left\{0,\:3,\:4,\:6,\:8\right\},\:\left\{1,\:2,\:5,\:7,\:9\right\}\right\},\\
 \left\{\left\{0,\:3,\:4,\:6,\:9\right\},\:\left\{1,\:2,\:5,\:7,\:8\right\}\right\}, &
 \left\{\left\{0,\:3,\:4,\:7,\:8\right\},\:\left\{1,\:2,\:5,\:6,\:9\right\}\right\},\\
 \left\{\left\{0,\:3,\:5,\:6,\:8\right\},\:\left\{1,\:2,\:4,\:7,\:9\right\}\right\}, &
 \left\{\left\{1,\:2,\:4,\:6,\:8\right\},\:\left\{0,\:3,\:5,\:7,\:9\right\}\right\},\\
 \left\{\left\{1,\:2,\:4,\:6,\:9\right\},\:\left\{0,\:3,\:5,\:7,\:8\right\}\right\}, &
 \left\{\left\{1,\:2,\:4,\:7,\:8\right\},\:\left\{0,\:3,\:5,\:6,\:9\right\}\right\},\\
 \left\{\left\{1,\:2,\:5,\:6,\:8\right\},\:\left\{0,\:3,\:4,\:7,\:9\right\}\right\}, &
 \left\{\left\{1,\:3,\:4,\:6,\:8\right\},\:\left\{0,\:2,\:5,\:7,\:9\right\}\right\}.
\end{matrix}
\]
Each of these matchings corresponds to two elements, call them \defi{match vectors}, of our distinguished copy of $(\CC^2)^{\otimes 5} \subset \bw{5}\CC^{10}$, for instance in the first case we have the vectors $e_{0}e_{2}e_{4}e_{6}e_{8}\pm e_{1}e_{3}e_{5}e_{7}e_{9}$. Individually, the rank profiles and characteristic polynomials of match vectors are all identical; see Table~\ref{tab:C2o5ss}. The fact that the adjoint rank profile stabilizes led us to believe that match vectors are not ad-nilpotent and might be ad-semisimple. We checked the geometric multiplicity of the eigenvalues ($4^\text{th}$-roots of unity) found by factoring the characteristic polynomial and found that all 4 non-zero eigenspaces have geometric multiplicity 25, which agrees with their algebraic multiplicity, so the adjoint forms of match vectors are all almost diagonalizable.
\begin{remark}When working in M2, we noted that computations of dimensions of eigenspaces via matrix kernels were problematic working over $\CC$, but when we made the symbolic field extension by hand and worked in exact arithmetic, we could verify that the geometric and algebraic multiplicities of the eigenvalues of our adjoint forms were indeed the same. 
\end{remark}

The fact that the match vectors all have the same adjoint characteristic polynomial means they have the same adjoint eigenvalues counting multiplicities. However, they're not necessarily simultaneously diagonalizable unless they also commute, which seems not to be the case. 

The next step is to attempt to find a maximal subset of these. We checked that a subset of 5 of these forms ($p_{0,+},\ldots,p_{4,+}$, for instance) satisfies $[a,b]=0$ for $a\neq b$, then we tried to add in $p_{i,-}$ for each $p_{i,+}$ in our set. Adding any other basic match vector to the set failed $[a,b]=0$ for $a\neq b$, so in this sense, our set is maximal.
\end{proof}

\begin{table} 
\begin{tabular}{CCCC}
\scalebox{.8}{normal form}& \scalebox{.9}{Rank Profile in $\bw{5}\CC^{10}$ }&\scalebox{.9}{Rank Profile in $(\CC^{2})^{\otimes 5}$} & \scalebox{.9}{$\chi_{T}(t)$, $\chi_{T}(t)_{\mid(\CC^{2})^{\otimes 5}}$}\\ \hline\\
\begin{matrix}
e_{1}e_{2}e_{4}e_{6}e_{8}+e_{0}e_{2}e_{5}e_{6}e_{8}\\
+e_{1}e_{3}e_{4}e_{7}e_{9}+e_{0}e_{3}e_{5}e_{7}e_{9}
\end{matrix}
&
 \left|\begin{smallmatrix}
 B_{00} & B_{01} & B_{10}& B_{11}& B \\[.5ex]
\hline\\[.5ex]
0&72&72&0&144\\
66&0&0&72&138\\
0&66&66&0&132\\
66&0&0&66&132
\end{smallmatrix}\right| 
&
 \left|\begin{smallmatrix}
 B_{00} & B_{01} & B_{10}& B_{11}& B \\[.5ex]
\hline\\[.5ex]
0&10&10&0&20\\
6&0&0&10&16\\
0&6&6&0&12\\
6&0&0&6&12
\end{smallmatrix}\right| 
&
\begin{smallmatrix}
\left(t\right)^{219}\left(t^4-1\right)^{24}\left(t^{4}-4\right)^{9},\\
\left(t\right)^{35}\left(t^{4}-4\right)^{3}
\end{smallmatrix}
\\\\
\begin{matrix}e_{1}e_{2}e_{4}e_{6}e_{8}+e_{0}e_{3}e_{5}e_{7}e_{8} \\
+e_{1}e_{2}e_{4}e_{6}e_{9}+e_{0}e_{3}e_{5}e_{7}e_{9}
\end{matrix}
&
 \left|\begin{smallmatrix}
 B_{00} & B_{01} & B_{10}& B_{11}& B \\[.5ex]
\hline\\[.5ex]
0&51&51&0&102\\
18&0&0&34&52\\
0&1&1&0&2\\
1&0&0&0&1\\
0&0&0&0&0
\end{smallmatrix}\right| 
&
 \left|\begin{smallmatrix}
 B_{00} & B_{01} & B_{10}& B_{11}& B \\[.5ex]
\hline\\[.5ex]
0&11&11&0&22\\
2&0&0&10&12\\
0&1&1&0&2\\
1&0&0&0&1\\
0&0&0&0&0
\end{smallmatrix}\right| 
&
\begin{matrix}
\left(t\right)^{351},&
\left(t\right)^{47}
\end{matrix}
\\\\
2\,e_{1}e_{2}e_{4}e_{6}e_{8}
&
 \left|\begin{smallmatrix}
 B_{00} & B_{01} & B_{10}& B_{11}& B \\[.5ex]
\hline\\[.5ex]
0&26&26&0&52\\
0&0&0&1&1\\
0&0&0&0&0
\end{smallmatrix}\right| 
&
 \left|\begin{smallmatrix}
 B_{00} & B_{01} & B_{10}& B_{11}& B \\[.5ex]
\hline\\[.5ex]
0&6&6&0&12\\
0&0&0&1&1\\
0&0&0&0&0
\end{smallmatrix}\right| 
&
\begin{matrix}
\left(t\right)^{351},&
\left(t\right)^{47}
\end{matrix}
\\\\
\begin{matrix}e_{1}e_{2}e_{4}e_{6}e_{8}+e_{0}e_{3}e_{4}e_{6}e_{8}\\
-e_{1}e_{2}e_{5}e_{7}e_{9}+e_{0}e_{3}e_{5}e_{7}e_{9}
\end{matrix}
&
 \left|\begin{smallmatrix}
 B_{00} & B_{01} & B_{10}& B_{11}& B \\[.5ex]
\hline\\[.5ex]
0&76&76&0&152\\
56&0&0&76&132\\
0&56&56&0&112\\
56&0&0&56&112
\end{smallmatrix}\right| 
&
 \left|\begin{smallmatrix}
 B_{00} & B_{01} & B_{10}& B_{11}& B \\[.5ex]
\hline\\[.5ex]
0&12&12&0&24\\
4&0&0&12&16\\
0&4&4&0&8\\
4&0&0&4&8
\end{smallmatrix}\right| 
&
\begin{smallmatrix}
\left(t\right)^{239}\left(t^4-1\right)^{24}\left(t^{4}-4\right)^{4},\\
\left(t\right)^{39}\left(t^{4}-4\right)^{2}
\end{smallmatrix}
\\\\
\begin{matrix}
e_{1}e_{2}e_{4}e_{6}e_{8}+e_{0}e_{2}e_{4}e_{7}e_{8}\\
-e_{1}e_{3}e_{5}e_{6}e_{9}+e_{0}e_{3}e_{5}e_{7}e_{9}
\end{matrix}
&
 \left|\begin{smallmatrix}
 B_{00} & B_{01} & B_{10}& B_{11}& B \\[.5ex]
\hline\\[.5ex]
0&76&76&0&152\\
56&0&0&76&132\\
0&56&56&0&112\\
56&0&0&56&112
\end{smallmatrix}\right| 
&
 \left|\begin{smallmatrix}
 B_{00} & B_{01} & B_{10}& B_{11}& B \\[.5ex]
\hline\\[.5ex]
0&12&12&0&24\\
4&0&0&12&16\\
0&4&4&0&8\\
4&0&0&4&8
\end{smallmatrix}\right| 
&
\begin{smallmatrix}
\left(t\right)^{239}\left(t^4-1\right)^{24}\left(t^{4}-4\right)^{4},\\
\left(t\right)^{39}\left(t^{4}-4\right)^{2}
\end{smallmatrix}
\\\\
\begin{matrix}
e_{1}e_{2}e_{4}e_{6}e_{8}+e_{1}e_{2}e_{4}e_{7}e_{8}\\
-e_{0}e_{3}e_{5}e_{6}e_{9}+e_{0}e_{3}e_{5}e_{7}e_{9}
\end{matrix}
&
 \left|\begin{smallmatrix}
 B_{00} & B_{01} & B_{10}& B_{11}& B \\[.5ex]
\hline\\[.5ex]
0&51&51&0&102\\
50&0&0&51&101\\
0&50&50&0&100\\
50&0&0&50&100
\end{smallmatrix}\right| 
&
 \left|\begin{smallmatrix}
 B_{00} & B_{01} & B_{10}& B_{11}& B \\[.5ex]
\hline\\[.5ex]
0&11&11&0&22\\
10&0&0&11&21\\
0&10&10&0&20\\
10&0&0&10&20
\end{smallmatrix}\right| 
&
\begin{matrix}
\left(t\right)^{251}\left(t^{4}-4\right)^{25},\\
\left(t\right)^{27}\left(t^{4}-4\right)^{5}
\end{matrix}
\\\\
\end{tabular}
\caption{Adjoint rank profiles and characteristic polynomials for some sums of semisimple-like tensors in $(\CC^2)^{\otimes 5}$.}\label{tab:C2o5ss} 
\end{table}

We are curious about the behavior of the ad-semisimple elements coming from the match vectors and linear combinations of such.
When we add specific pairs, we obtain other forms that seem like semisimple forms and some that are nilpotent. We list these types as rows 2 and 3 in Table \ref{tab:C2o5ss}.
Like in the standard Vinberg theory, the ranks and characteristic polynomials depend on the coefficients we apply when combining semisimple elements. 
For example, the form $a (e_{1}e_{2}e_{4}e_{6}e_{8}+e_{0}e_{3}e_{5}e_{7}e_{8})
+b(e_{1}e_{2}e_{4}e_{6}e_{9}+e_{0}e_{3}e_{5}e_{7}e_{9})$ corresponds to the row of Table~\ref{tab:C2o5ss} the adjoint rank 144 when $a^2-b^2 \neq 0$ and 102 when $a^2-b^2 =0$. This collapse comes from a factorization:
\[
a (e_{1}e_{2}e_{4}e_{6}e_{8}+e_{0}e_{3}e_{5}e_{7}e_{8})
+b(e_{1}e_{2}e_{4}e_{6}e_{9}+e_{0}e_{3}e_{5}e_{7}e_{9})
= e_{1}e_{2}e_{4}e_{6}(a e_{8} + b e_9) +e_{0}e_{3}e_{5}e_{7}(a e_{8} + b e_9)
.\]
This form is equivalent to a semisimple-like form
\[ e_{1}e_{2}e_{4}e_{6} \tilde e_8+e_{0}e_{3}e_{5}e_{7}\tilde e_9,\]
 when $(\tilde e_8) = (a e_{8} + b e_9)$ and $(\tilde e_9) = (a e_{8} + b e_9)$ are linearly independent, i.e. when $\det \left( \begin{smallmatrix}
a & b \\ b & a\end{smallmatrix}\right) \neq 0$. Otherwise, the form is not concise and becomes a point on a restricted chordal variety \cite{BidlemanOeding}:
\[ (e_{1}e_{2}e_{4}e_{6} +e_{0}e_{3}e_{5}e_{7})(\tilde e_8).\]
Like in the Vinberg situations, the non-concise tensors are nilpotent.

We can make linear combinations of semisimple forms with higher rank. For instance, when adding pairs $0,3,4,5$ we obtain the form in row 4 of Table~\ref{tab:C2o5ss}. Indeed, this appears to be a rich theory and indicates a new way to construct tensors of high rank.
\begin{table}
\begin{tabular}{CCCC}
\scalebox{.8}{Tensor Rank }& \scalebox{.9}{Rank Profile in $\bw{5}\CC^{10}$ }&\scalebox{.9}{Rank Profile in $(\CC^{2})^{\otimes 5}$} & \scalebox{.9}{$\chi_{T}(t)_{\mid(\CC^{2})^{\otimes 5}}$}\\ \hline\\
1 &
 \left|\begin{smallmatrix}
 B_{00} & B_{01} & B_{10}& B_{11}& B \\[.5ex]
\hline\\[.5ex]
0&26&26&0&52\\
0&0&0&1&1\\
0&0&0&0&0
\end{smallmatrix}\right| &
 \left|\begin{smallmatrix}
 B_{00} & B_{01} & B_{10}& B_{11}& B \\[.5ex]
\hline\\[.5ex]
0&6&6&0&12\\
0&0&0&1&1\\
0&0&0&0&0
\end{smallmatrix}\right| 
& t^{47} 
\\[5ex]
2& 
 \left|\begin{smallmatrix}
 B_{00} & B_{01} & B_{10}& B_{11}& B \\[.5ex]
\hline\\[.5ex]
0&51&51&0&102\\
50&0&0&51&101\\
0&50&50&0&100
\end{smallmatrix}\right| &
 \left|\begin{smallmatrix}
 B_{00} & B_{01} & B_{10}& B_{11}& B \\[.5ex]
\hline\\[.5ex]
0&11&11&0&22\\
10&0&0&11&21\\
0&10&10&0&20
\end{smallmatrix}\right|
&\begin{smallmatrix}
\left(t\right)^{27}\left(t^{4}-a^2\right)^{5}
\end{smallmatrix}
\\[5ex]
3 & 
 \left|\begin{smallmatrix}
 B_{00} & B_{01} & B_{10}& B_{11}& B \\[.5ex]
\hline\\[.5ex]
0&75&75&0&150\\
50&0&0&75&125\\
0&50&50&0&100\\
\end{smallmatrix}\right| &
 \left|\begin{smallmatrix}
 B_{00} & B_{01} & B_{10}& B_{11}& B \\[.5ex]
\hline\\[.5ex]
0&15&15&0&30\\
10&0&0&15&25\\
0&10&10&0&20
\end{smallmatrix}\right|
&\begin{smallmatrix}
 \left(t\right)^{27}\left(t^{4}-a\right)^{5}
 \end{smallmatrix}
\\[5ex]
\geq 4 & 
 \left|\begin{smallmatrix}
 B_{00} & B_{01} & B_{10}& B_{11}& B \\[.5ex]
\hline\\[.5ex]
0&95&95&0&190\\
90&0&0&95&185\\
0&90&90&0&180
\end{smallmatrix}\right|
&
 \left|\begin{smallmatrix}
 B_{00} & B_{01} & B_{10}& B_{11}& B \\[.5ex]
\hline\\[.5ex]
0&15&15&0&30\\
10&0&0&15&25\\
0&10&10&0&20
\end{smallmatrix}\right|
&
\begin{smallmatrix}\left(t\right)^{27}
\left(t^{4}-a\right)\left(t^{4}-b\right)\left(t^{4}-c\right)\left(t^{4}-d\right)\left(t^{4}-e\right)
\end{smallmatrix}\\\\
\end{tabular}
\caption{Adjoint rank profiles and characteristic polynomials for random tensors of each tensor rank in $(\CC^2)^{\otimes 5}$ ($a,b,\ldots, e$ denote random rational numbers).}\label{tab:C2o5}
\end{table}

In Table~\ref{tab:Osterloh} we report our computations for the examples in Osterloh \cite{Osterloh06}, which are the following in the physics notation:
\begin{eqnarray*}
\ket{\Psi_2}&=&\frac{1}{\sqrt{2}}(\ket{00000}+\ket{11111}), \\ 
 \ket{\Psi_4}&=&\frac{1}{2}(\ket{11111}+\ket{11100}+\ket{00010}+\ket{00001}),
\\
\ket{\Psi_5}&=&\frac{1}{\sqrt{6}}(\sqrt{2}\ket{11111}+\ket{11000}+\ket{00100}+\ket{00010}+\ket{00001}), \\
\ket{\Psi_6}&=&\frac{1}{2\sqrt{2}}(\sqrt{3}\ket{11111}+\ket{10000}+\ket{01000}+\ket{00100}+\ket{00010}+\ket{00001}).
\end{eqnarray*}
Our computations show that the single construction of the adjoint operators and their ranks and characteristic polynomials are enough to distinguish these orbits.
\begin{table}
\begin{tabular}{CCCC}
\scalebox{.9}{Normal Form in $\bw{5}\CC^{10}$ }& \scalebox{.9}{Rank Profile in $\bw{5}\CC^{10}$ }&\scalebox{.9}{Rank Profile in $(\CC^{2})^{\otimes 5}$} & \scalebox{.9}{$\chi_{T}(t)_{\mid(\CC^{2})^{\otimes 5}}$}\\ \hline\\
\begin{smallmatrix}
\Psi_2 &=& {e}_{0}{e}_{2}{e}_{4}{e}_{6}{e}_{8} \\
&&+{e}_{1}{e}_{3}{e}_{5}{e}_{7}{e}_{9} 
\end{smallmatrix}
&
\left|\begin{smallmatrix}
 B_{00} & B_{01} & B_{10}& B_{11}& B \\[.5ex]
\hline\\[.5ex]
0&51&51&0&102\\
50&0&0&51&101\\
0&50&50&0&100
\end{smallmatrix}\right|
&
\left|\begin{smallmatrix}
 B_{00} & B_{01} & B_{10}& B_{11}& B \\[.5ex]
\hline\\[.5ex]
0&11&11&0&22\\
10&0&0&11&21\\
0&10&10&0&20\end{smallmatrix}\right|
&t^{27}\left({t^{4}-1}\right)^{2}
\\[4ex]
\begin{smallmatrix}
\Psi_4 &=& 
e_{1}e_{3}e_{5}e_{6}e_{8}\\
&&+e_{0}e_{2}e_{4}e_{7}e_{8}\\
&&+e_{0}e_{2}e_{4}e_{6}e_{9}\\
&&+e_{1}e_{3}e_{5}e_{7}e_{9}
\end{smallmatrix} &
\left|\begin{smallmatrix}
 B_{00} & B_{01} & B_{10}& B_{11}& B \\[.5ex]
\hline\\[.5ex]
0&76&76&0&152\\
56&0&0&76&132\\
0&56&56&0&112\end{smallmatrix}\right|
&
\left|\begin{smallmatrix}
 B_{00} & B_{01} & B_{10}& B_{11}& B \\[.5ex]
\hline\\[.5ex]
0&12&12&0&24\\
4&0&0&12&16\\
0&4&4&0&8\end{smallmatrix}\right|
& \begin{smallmatrix}t^{39}\left({t^{4}-2}\right)^{2}\end{smallmatrix}
\\[4ex] 
\begin{smallmatrix}
\Psi_5 &=& \sqrt2 e_{1}e_{3}e_{5}e_{7}e_{9}\\
&&+e_{1}e_{3}e_{4}e_{6}e_{8}\\
&&+e_{0}e_{2}e_{5}e_{6}e_{8}\\
&&+e_{0}e_{2}e_{4}e_{7}e_{8}\\
&&+e_{0}e_{2}e_{4}e_{6}e_{9}
\end{smallmatrix}
&
\left|\begin{smallmatrix}
 B_{00} & B_{01} & B_{10}& B_{11}& B \\[.5ex]
\hline\\[.5ex]
 0&84&84&0&168\\
 42&0&0&84&126\\
 0&42&42&0&84\\
 9&0&0&42&51\\
 0&9&9&0&18\\
 0&0&0&9&9\\
 0&0&0&0&0\end{smallmatrix} \right|
&
\left|\begin{smallmatrix}
 B_{00} & B_{01} & B_{10}& B_{11}& B \\[.5ex]
\hline\\[.5ex]
 0&14&14&0&28\\
 6&0&0&14&20\\
 0&6&6&0&12\\
 3&0&0&6&9\\
 0&3&3&0&6\\
 0&0&0&3&3\\
 0&0&0&0&0
 \end{smallmatrix}\right|
&t^{47}
\\[6ex]
\begin{smallmatrix}
\Psi_6 &=& \sqrt3 e_{1}e_{3}e_{5}e_{7}e_{9}\\
&& +e_{1}e_{2}e_{4}e_{6}e_{8}\\
&&+e_{0}e_{3}e_{4}e_{6}e_{8}\\
&&+e_{0}e_{2}e_{5}e_{6}e_{8}\\
&&+e_{0}e_{2}e_{4}e_{7}e_{8}\\
&&+e_{0}e_{2}e_{4}e_{6}e_{9}
\end{smallmatrix}
&
\left|\begin{smallmatrix}
 B_{00} & B_{01} & B_{10}& B_{11}& B \\[.5ex]
\hline\\[.5ex]
 0&75&75&0&150\\
 50&0&0&75&125\\
 0&50&50&0&100\\
 25&0&0&50&75\\
 0&25&25&0&50\\
 0&0&0&25&25\\
 0&0&0&0&0\end{smallmatrix}\right|
&
\left|\begin{smallmatrix}
 B_{00} & B_{01} & B_{10}& B_{11}& B \\[.5ex]
\hline\\[.5ex]
 0&15&15&0&30\\
 10&0&0&15&25\\
 0&10&10&0&20\\
 5&0&0&10&15\\
 0&5&5&0&10\\
 0&0&0&5&5\\
 0&0&0&0&0
 \end{smallmatrix}\right|
 &t^{47} \\\\
 \end{tabular}
 \caption{Adjoint rank profiles and characteristic polynomials for the examples for 5 qubits from \cite{Osterloh06}.}\label{tab:Osterloh}
\end{table}

\begin{example}[$5\times 5\times 5$ tensors]
Now we consider tensors in $\bw3 W$, with $\dim W = 15$, and we do computations over $\ZZ/1000000007$ to avoid coefficient explosion for computing ranks of matrices. In particular, we're interested in tensors in $V_1\otimes V_2 \otimes V_3 \subset \bw3 W$, with $\dim V_j= 5$. Respectively we take the bases $\{e_{0+5*(j-1)},\ldots,e_{4 + 5*(j-1)}\}$ of $V_j$ for $j=1,2,3$.

The rank profiles of the adjoint operators for forms of each rank are recorded in Tables~\ref{tab:W3C15} (for normal forms for low rank) and \ref{tab:W3C15b} (for randomized forms of a given rank).

\begin{table}
\begin{tabular}{l}
$e_{0}e_{5}e_{10}$ 
\\
$\left|\begin{smallmatrix}
B_{00}&B_{10}&B_{20}&B_{30} & B_{40}&B_{01}&B_{11}&B_{21}&B_{31}&B_{41}&B_{02}&B_{12}&B_{22}&B_{32}&B_{42}&B_{03}&B_{13}&B_{23}&B_{33}&B_{43}&B_{04}&B_{14}&B_{24}&B_{34}&B_{44}&B \\
\hline\\[.5ex]
0&37&0&0&0&0&0&220&0&0&0&0&0&924&0&0&0&0&0&220&37&0&0&0&0&1\,438\\
0&0&0&0&0&0&0&0&0&0&0&0&0&0&0&0&0&0&0&0&0&1&0&0&0&1\\
0&0&0&0&0&0&0&0&0&0&0&0&0&0&0&0&0&0&0&0&0&0&0&0&0&0\\
\end{smallmatrix}\right|$
\\
 -- used 229.807 seconds to compute the block ranks
\\[1ex]

\hline
$e_{0}e_{5}e_{10}+e_{1}e_{6}e_{11}$
\\
$\left|\begin{smallmatrix}
B_{00}&B_{10}&B_{20}&B_{30} & B_{40}&B_{01}&B_{11}&B_{21}&B_{31}&B_{41}&B_{02}&B_{12}&B_{22}&B_{32}&B_{42}&B_{03}&B_{13}&B_{23}&B_{33}&B_{43}&B_{04}&B_{14}&B_{24}&B_{34}&B_{44}&B \\
\hline\\[.5ex]
0&74&0&0&0&0&0&355&0&0&0&0&0&1\,680&0&0&0&0&0&355&74&0&0&0&0&2\,538\\
0&0&55&0&0&0&0&0&0&0&0&0&0&0&0&55&0&0&0&0&0&20&0&0&0&130\\
0&0&0&0&0&0&0&0&0&0&0&0&0&0&0&0&1&0&0&0&0&0&1&0&0&2\\
0&0&0&0&0&0&0&0&0&0&0&0&0&0&0&0&0&1&0&0&0&0&0&0&0&1\\
0&0&0&0&0&0&0&0&0&0&0&0&0&0&0&0&0&0&0&0&0&0&0&0&0&0
\end{smallmatrix}\right|$
\\
 -- used 265.469 seconds to compute the block ranks
\\[1ex]
\hline
$e_{0}e_{5}e_{10}+e_{1}e_{6}e_{11}+e_{2}e_{7}e_{12}$
\\
$\left|\begin{smallmatrix}
B_{00}&B_{10}&B_{20}&B_{30} & B_{40}&B_{01}&B_{11}&B_{21}&B_{31}&B_{41}&B_{02}&B_{12}&B_{22}&B_{32}&B_{42}&B_{03}&B_{13}&B_{23}&B_{33}&B_{43}&B_{04}&B_{14}&B_{24}&B_{34}&B_{44}&B \\
\hline\\[.5ex]
0&111&0&0&0&0&0&427&0&0&0&0&0&2\,310&0&0&0&0&0&427&111&0&0&0&0&3\,386\\
0&0&110&0&0&0&0&0&0&0&0&0&0&0&0&110&0&0&0&0&0&57&0&0&0&277\\
0&0&0&0&0&0&0&0&0&0&0&0&0&0&0&0&56&0&0&0&0&0&56&0&0&112\\
0&0&0&0&0&0&0&0&0&0&0&0&0&0&0&0&0&56&0&0&0&0&0&0&0&56\\
0&0&0&0&0&0&0&0&0&0&0&0&0&0&0&0&0&0&0&0&0&0&0&0&0&0
\end{smallmatrix}\right|$
\\
 -- used 310.79 seconds to compute the block ranks
\\[1ex]
\hline

$e_{0}e_{5}e_{10}+e_{1}e_{6}e_{11}+e_{2}e_{7}e_{12}+e_{3}e_{8}e_{13}$
\\
$\left|\begin{smallmatrix}
B_{00}&B_{10}&B_{20}&B_{30} & B_{40}&B_{01}&B_{11}&B_{21}&B_{31}&B_{41}&B_{02}&B_{12}&B_{22}&B_{32}&B_{42}&B_{03}&B_{13}&B_{23}&B_{33}&B_{43}&B_{04}&B_{14}&B_{24}&B_{34}&B_{44}&B \\
\hline\\[.5ex]
0&148&0&0&0&0&0&454&0&0&0&0&0&2\,850&0&0&0&0&0&454&148&0&0&0&0&4\,054\\
0&0&147&0&0&0&0&0&0&0&0&0&0&0&0&147&0&0&0&0&0&112&0&0&0&406\\
0&0&0&0&0&0&0&0&0&0&0&0&0&0&0&0&111&0&0&0&0&0&111&0&0&222\\
0&0&0&0&0&0&0&0&0&0&0&0&0&0&0&0&0&111&0&0&0&0&0&0&0&111\\
0&0&0&0&0&0&0&0&0&0&0&0&0&0&0&0&0&0&0&0&0&0&0&0&0&0
\end{smallmatrix}\right|$
\\
 -- used 338.107 seconds to compute the block ranks
\\[1ex]
\hline

$e_{0}e_{5}e_{10}+e_{1}e_{6}e_{11}+e_{2}e_{7}e_{12}+e_{3}e_{8}e_{13}+e_{4}e_{9}e_{14}$
\\
$\left|\begin{smallmatrix}
B_{00}&B_{10}&B_{20}&B_{30} & B_{40}&B_{01}&B_{11}&B_{21}&B_{31}&B_{41}&B_{02}&B_{12}&B_{22}&B_{32}&B_{42}&B_{03}&B_{13}&B_{23}&B_{33}&B_{43}&B_{04}&B_{14}&B_{24}&B_{34}&B_{44}&B \\
\hline\\[.5ex]
0&184&0&0&0&0&0&454&0&0&0&0&0&3\,336&0&0&0&0&0&454&184&0&0&0&0&4\,612\\
0&0&184&0&0&0&0&0&0&0&0&0&0&0&0&184&0&0&0&0&0&184&0&0&0&552\\
0&0&0&0&0&0&0&0&0&0&0&0&0&0&0&0&184&0&0&0&0&0&184&0&0&368\\
0&0&0&0&0&0&0&0&0&0&0&0&0&0&0&0&0&184&0&0&0&0&0&0&0&184\\
0&0&0&0&0&0&0&0&0&0&0&0&0&0&0&0&0&0&0&0&0&0&0&0&0&0
\end{smallmatrix}\right|$
\\
 -- used 359.908 seconds to compute the block ranks
\\[1ex]
\hline
\end{tabular}
 \caption{Forms of each rank in $\bw{3}\CC^{15}$ and their adjoint rank profiles in $\fa = \sl_{15} \oplus \bigoplus_k \bw{3k}\CC^{15}$} \label{tab:W3C15}
\end{table}
\begin{table}
\begin{tabular}{l}
\text{random rank 6}
 -- used 142.025 seconds to construct the adjoint form
\\
$\left|\begin{smallmatrix}
B_{00}&B_{10}&B_{20}&B_{30} & B_{40}&B_{01}&B_{11}&B_{21}&B_{31}&B_{41}&B_{02}&B_{12}&B_{22}&B_{32}&B_{42}&B_{03}&B_{13}&B_{23}&B_{33}&B_{43}&B_{04}&B_{14}&B_{24}&B_{34}&B_{44}&B \\
\hline\\[.5ex]
0&216&0&0&0&0&0&454&0&0&0&0&0&3\,796&0&0&0&0&0&454&216&0&0&0&0&5\,136\\
0&0&216&0&0&0&0&0&0&0&0&0&0&0&0&216&0&0&0&0&0&216&0&0&0&648\\
0&0&0&0&0&0&0&0&0&0&0&0&0&0&0&0&216&0&0&0&0&0&216&0&0&432\\
0&0&0&0&0&0&0&0&0&0&0&0&0&0&0&0&0&216&0&0&0&0&0&0&0&216\\
0&0&0&0&0&0&0&0&0&0&0&0&0&0&0&0&0&0&0&0&0&0&0&0&0&0
\end{smallmatrix}\right|$
\\
 -- used 277.98 seconds to compute the block ranks
\\[1ex]
\hline

\text{random rank 7}
 -- used 139.479 seconds to construct the adjoint form
 \\
$\left|\begin{smallmatrix}
B_{00}&B_{10}&B_{20}&B_{30} & B_{40}&B_{01}&B_{11}&B_{21}&B_{31}&B_{41}&B_{02}&B_{12}&B_{22}&B_{32}&B_{42}&B_{03}&B_{13}&B_{23}&B_{33}&B_{43}&B_{04}&B_{14}&B_{24}&B_{34}&B_{44}&B \\
\hline\\[.5ex]
0&222&0&0&0&0&0&454&0&0&0&0&0&4\,252&0&0&0&0&0&454&222&0&0&0&0&5\,604\\
0&0&222&0&0&0&0&0&0&0&0&0&0&0&0&222&0&0&0&0&0&222&0&0&0&666\\
0&0&0&0&0&0&0&0&0&0&0&0&0&0&0&0&222&0&0&0&0&0&222&0&0&444\\
0&0&0&0&0&0&0&0&0&0&0&0&0&0&0&0&0&222&0&0&0&0&0&0&0&222\\
0&0&0&0&0&0&0&0&0&0&0&0&0&0&0&0&0&0&0&0&0&0&0&0&0&0
\end{smallmatrix}\right|$
\\
 -- used 298.66 seconds to compute the block ranks
 \\[1ex]
\hline

\text{random rank 8}

 -- used 135.61 seconds to construct the adjoint form
\\
$\left|\begin{smallmatrix}
B_{00}&B_{10}&B_{20}&B_{30} & B_{40}&B_{01}&B_{11}&B_{21}&B_{31}&B_{41}&B_{02}&B_{12}&B_{22}&B_{32}&B_{42}&B_{03}&B_{13}&B_{23}&B_{33}&B_{43}&B_{04}&B_{14}&B_{24}&B_{34}&B_{44}&B \\
\hline\\[.5ex]
0&222&0&0&0&0&0&454&0&0&0&0&0&4\,464&0&0&0&0&0&454&222&0&0&0&0&5\,816\\
0&0&222&0&0&0&0&0&0&0&0&0&0&0&0&222&0&0&0&0&0&222&0&0&0&666\\
0&0&0&0&0&0&0&0&0&0&0&0&0&0&0&0&222&0&0&0&0&0&222&0&0&444\\
0&0&0&0&0&0&0&0&0&0&0&0&0&0&0&0&0&222&0&0&0&0&0&0&0&222\\
0&0&0&0&0&0&0&0&0&0&0&0&0&0&0&0&0&0&0&0&0&0&0&0&0&0
\end{smallmatrix}\right|$
\\
 -- used 291.446 seconds to compute the block ranks
\\[1ex]
\hline

\text{random rank 9}

 -- used 133.797 seconds to construct the adjoint form
\\
$\left|\begin{smallmatrix}
B_{00}&B_{10}&B_{20}&B_{30} & B_{40}&B_{01}&B_{11}&B_{21}&B_{31}&B_{41}&B_{02}&B_{12}&B_{22}&B_{32}&B_{42}&B_{03}&B_{13}&B_{23}&B_{33}&B_{43}&B_{04}&B_{14}&B_{24}&B_{34}&B_{44}&B \\
\hline\\[.5ex]
0&222&0&0&0&0&0&454&0&0&0&0&0&4\,476&0&0&0&0&0&454&222&0&0&0&0&5\,828\\
0&0&222&0&0&0&0&0&0&0&0&0&0&0&0&222&0&0&0&0&0&222&0&0&0&666\\
0&0&0&0&0&0&0&0&0&0&0&0&0&0&0&0&222&0&0&0&0&0&222&0&0&444\\
0&0&0&0&0&0&0&0&0&0&0&0&0&0&0&0&0&222&0&0&0&0&0&0&0&222\\
0&0&0&0&0&0&0&0&0&0&0&0&0&0&0&0&0&0&0&0&0&0&0&0&0&0
\end{smallmatrix}\right|$
\\
 -- used 293.198 seconds to compute the block ranks
\end{tabular}
 \caption{Randomized forms of each rank in $\bw{3}\CC^{15}$ and their adjoint rank profiles in $\fa = \sl_{15} \oplus \bigoplus_k \bw{3k}\CC^{15}$} \label{tab:W3C15b}
\end{table}

For rank 10 and up, we obtain the same adjoint rank profile as for rank 9, making rank 9 the largest rank that these rank profiles can detect. We also note that for rank 7 and larger, the only block whose rank changes is $B_{32}$. 

Landsberg points out \cite{landsberg2015nontriviality} that the determinant of a particular exterior flattening gives non-trivial equations for border rank 8, so ours is not the first tool that we use can distinguish border rank 8 from 9 (the generic border rank). However, we remark that in the case of exterior flattenings, one has to construct each flattening separately and check for non-trivial equations, the adjoint operator in the extension of the exterior algebra contains all of these exterior flattenings in a single construction. 
The fact that this case is still within our computational range tells us we can use the adjoint operators to study this type of tensor further.
\end{example}

\section*{Acknowledgements}
Holweck thanks Pr. Ash Abebe and his staff of the AU Mathematics and Statistics department for making his visiting year in Auburn possible as well as Pr. Ghislain Montavon from UTBM for his support in this project. Holweck was partially supported by the Conseil R\'egional de Bourgogne Franche-Comt\'e (GeoQuant project). Oeding and Holweck acknowledge partial support from the Thomas Jefferson Foundation. Oeding acknowledges partial support from the University of Bourgogne for summer salary that supported part of this work.
We also thank Laurent Manivel, Sasha Elashvili, Mamuka Jibladze, Skip Garibaldi, and Ian Tan for helpful discussions. An anonymous referee's remarks provided valuable insight for a revision of this work.

Statement: During the preparation of this work the authors used Grammarly in order to identify and correct grammatical issues. After using this tool/service, the authors reviewed and edited the content as needed and take full responsibility for the content of the publication.
\FloatBarrier
\newcommand{\arxiv}[1]{\href{http://arxiv.org/abs/#1}{{\tt arXiv:#1}}}
\bibliography{/Users/lao0004/Library/CloudStorage/Box-Box/texFiles/main_bibfile.bib}
\newpage
\section*{Appendix}\label{sec:appendix}
In this appendix we provide the trace power invariants for general semisimple elements and the adjoint rank profiles of all orbits in Vinberg and {\`E}la{\v{s}}vili's classification for $\bw3 \CC^9$, \cite{Vinberg-Elashvili} as an ancillary file to our article ``Jordan Decompositions of Tensors.''

The orbits of $\SL_9$ in $\bw{3}\CC^9$ occur in 7 families depending on the form of their semi-simple parts.
Recall the basic semi-simple elements:
\[p_1  = e_{0}e_{1}e_{2}+e_{3}e_{4}e_{5}+e_{6}e_{7}e_{8},  \quad p_2 = e_{0}e_{3}e_{6}+e_{1}e_{4}e_{7}+e_{2}e_{5}e_{8}\]
\[p_3  = e_{1}e_{5}e_{6}+e_{2}e_{3}e_{7}+e_{0}e_{4}e_{8}, \quad p_4 = e_{2}e_{4}e_{6}+e_{0}e_{5}e_{7}+e_{1}e_{3}e_{8}\]

Here is the adjoint rank profile for random elements of each of the different families of semi-simple elements: We note that each of these families have non-zero values for their trace powers in degrees 12, 18, 24, 30, etc.

\subsection*{Family one} Basic semisimple form $\sum_{i=1}^4 \lambda_i p_i$. Block Ranks:
$\left(\begin{smallmatrix}
       0&0&80&80&0&0&0&80&0&240\\
       0&80&0&0&0&80&80&0&0&240\\
       \end{smallmatrix}\right)$

The trace powers on generic semi-simple elements are:

$f_{12} = ( \lambda_{1}^{12}+22\, \lambda_{1}^{6} \lambda_{2}^{6}+ \lambda_{2}^{12}-220\, \lambda_{1}^{6} \lambda_{2}^{3} \lambda_{3}^{3}-220\, \lambda_{1}^{3} \lambda_{2}^{6} \lambda_{3}^{3}+22\, \lambda_{1}^{6} \lambda_{3}^{6}+220\, \lambda_{1}^{3} \lambda_{2}^{3} \lambda_{3}^{6}+22\, \lambda_{2}^{6} \lambda_{3}^{6}+ \lambda_{3}^{12}-220\, \lambda_{1}^{6} \lambda_{2}^{3} \lambda_{4}^{3}+220\, \lambda_{1}^{3} \lambda_{2}^{6} \lambda_{4}^{3}-220\, \lambda_{1}^{6} \lambda_{3}^{3} \lambda_{4}^{3}+220\, \lambda_{2}^{6} \lambda_{3}^{3} \lambda_{4}^{3}-220\, \lambda_{1}^{3} \lambda_{3}^{6} \lambda_{4}^{3}+220\, \lambda_{2}^{3} \lambda_{3}^{6} \lambda_{4}^{3}+22\, \lambda_{1}^{6} \lambda_{4}^{6}-220\, \lambda_{1}^{3} \lambda_{2}^{3} \lambda_{4}^{6}+22\, \lambda_{2}^{6} \lambda_{4}^{6}+220\, \lambda_{1}^{3} \lambda_{3}^{3} \lambda_{4}^{6}+220\, \lambda_{2}^{3} \lambda_{3}^{3} \lambda_{4}^{6}+22\, \lambda_{3}^{6} \lambda_{4}^{6}+ \lambda_{4}^{12})\left(4536\right)$

$f_{18} = ( \lambda_{1}^{18}-17\, \lambda_{1}^{12} \lambda_{2}^{6}-17\, \lambda_{1}^{6} \lambda_{2}^{12}+ \lambda_{2}^{18}+170\, \lambda_{1}^{12} \lambda_{2}^{3} \lambda_{3}^{3}+1870\, \lambda_{1}^{9} \lambda_{2}^{6} \lambda_{3}^{3}+1870\, \lambda_{1}^{6} \lambda_{2}^{9} \lambda_{3}^{3}+170\, \lambda_{1}^{3} \lambda_{2}^{12} \lambda_{3}^{3}-17\, \lambda_{1}^{12} \lambda_{3}^{6}-1870\, \lambda_{1}^{9} \lambda_{2}^{3} \lambda_{3}^{6}-7854\, \lambda_{1}^{6} \lambda_{2}^{6} \lambda_{3}^{6}-1870\, \lambda_{1}^{3} \lambda_{2}^{9} \lambda_{3}^{6}-17\, \lambda_{2}^{12} \lambda_{3}^{6}+1870\, \lambda_{1}^{6} \lambda_{2}^{3} \lambda_{3}^{9}+1870\, \lambda_{1}^{3} \lambda_{2}^{6} \lambda_{3}^{9}-17\, \lambda_{1}^{6} \lambda_{3}^{12}-170\, \lambda_{1}^{3} \lambda_{2}^{3} \lambda_{3}^{12}-17\, \lambda_{2}^{6} \lambda_{3}^{12}+ \lambda_{3}^{18}+170\, \lambda_{1}^{12} \lambda_{2}^{3} \lambda_{4}^{3}-1870\, \lambda_{1}^{9} \lambda_{2}^{6} \lambda_{4}^{3}+1870\, \lambda_{1}^{6} \lambda_{2}^{9} \lambda_{4}^{3}-170\, \lambda_{1}^{3} \lambda_{2}^{12} \lambda_{4}^{3}+170\, \lambda_{1}^{12} \lambda_{3}^{3} \lambda_{4}^{3}-170\, \lambda_{2}^{12} \lambda_{3}^{3} \lambda_{4}^{3}+1870\, \lambda_{1}^{9} \lambda_{3}^{6} \lambda_{4}^{3}-1870\, \lambda_{2}^{9} \lambda_{3}^{6} \lambda_{4}^{3}+1870\, \lambda_{1}^{6} \lambda_{3}^{9} \lambda_{4}^{3}-1870\, \lambda_{2}^{6} \lambda_{3}^{9} \lambda_{4}^{3}+170\, \lambda_{1}^{3} \lambda_{3}^{12} \lambda_{4}^{3}-170\, \lambda_{2}^{3} \lambda_{3}^{12} \lambda_{4}^{3}-17\, \lambda_{1}^{12} \lambda_{4}^{6}+1870\, \lambda_{1}^{9} \lambda_{2}^{3} \lambda_{4}^{6}-7854\, \lambda_{1}^{6} \lambda_{2}^{6} \lambda_{4}^{6}+1870\, \lambda_{1}^{3} \lambda_{2}^{9} \lambda_{4}^{6}-17\, \lambda_{2}^{12} \lambda_{4}^{6}-1870\, \lambda_{1}^{9} \lambda_{3}^{3} \lambda_{4}^{6}-1870\, \lambda_{2}^{9} \lambda_{3}^{3} \lambda_{4}^{6}-7854\, \lambda_{1}^{6} \lambda_{3}^{6} \lambda_{4}^{6}-7854\, \lambda_{2}^{6} \lambda_{3}^{6} \lambda_{4}^{6}-1870\, \lambda_{1}^{3} \lambda_{3}^{9} \lambda_{4}^{6}-1870\, \lambda_{2}^{3} \lambda_{3}^{9} \lambda_{4}^{6}-17\, \lambda_{3}^{12} \lambda_{4}^{6}+1870\, \lambda_{1}^{6} \lambda_{2}^{3} \lambda_{4}^{9}-1870\, \lambda_{1}^{3} \lambda_{2}^{6} \lambda_{4}^{9}+1870\, \lambda_{1}^{6} \lambda_{3}^{3} \lambda_{4}^{9}-1870\, \lambda_{2}^{6} \lambda_{3}^{3} \lambda_{4}^{9}+1870\, \lambda_{1}^{3} \lambda_{3}^{6} \lambda_{4}^{9}-1870\, \lambda_{2}^{3} \lambda_{3}^{6} \lambda_{4}^{9}-17\, \lambda_{1}^{6} \lambda_{4}^{12}+170\, \lambda_{1}^{3} \lambda_{2}^{3} \lambda_{4}^{12}-17\, \lambda_{2}^{6} \lambda_{4}^{12}-170\, \lambda_{1}^{3} \lambda_{3}^{3} \lambda_{4}^{12}-170\, \lambda_{2}^{3} \lambda_{3}^{3} \lambda_{4}^{12}-17\, \lambda_{3}^{6} \lambda_{4}^{12}+ \lambda_{4}^{18})\left(-117936\right)$

$f_{24}  =(111\, \lambda_{1}^{24}+506\, \lambda_{1}^{18} \lambda_{2}^{6}+10166\, \lambda_{1}^{12} \lambda_{2}^{12}+506\, \lambda_{1}^{6} \lambda_{2}^{18}+111\, \lambda_{2}^{24}-5060\, \lambda_{1}^{18} \lambda_{2}^{3} \lambda_{3}^{3}-206448\, \lambda_{1}^{15} \lambda_{2}^{6} \lambda_{3}^{3}-1118260\, \lambda_{1}^{12} \lambda_{2}^{9} \lambda_{3}^{3}-1118260\, \lambda_{1}^{9} \lambda_{2}^{12} \lambda_{3}^{3}-206448\, \lambda_{1}^{6} \lambda_{2}^{15} \lambda_{3}^{3}-5060\, \lambda_{1}^{3} \lambda_{2}^{18} \lambda_{3}^{3}+506\, \lambda_{1}^{18} \lambda_{3}^{6}+206448\, \lambda_{1}^{15} \lambda_{2}^{3} \lambda_{3}^{6}+4696692\, \lambda_{1}^{12} \lambda_{2}^{6} \lambda_{3}^{6}+12300860\, \lambda_{1}^{9} \lambda_{2}^{9} \lambda_{3}^{6}+4696692\, \lambda_{1}^{6} \lambda_{2}^{12} \lambda_{3}^{6}+206448\, \lambda_{1}^{3} \lambda_{2}^{15} \lambda_{3}^{6}+506\, \lambda_{2}^{18} \lambda_{3}^{6}-1118260\, \lambda_{1}^{12} \lambda_{2}^{3} \lambda_{3}^{9}-12300860\, \lambda_{1}^{9} \lambda_{2}^{6} \lambda_{3}^{9}-12300860\, \lambda_{1}^{6} \lambda_{2}^{9} \lambda_{3}^{9}-1118260\, \lambda_{1}^{3} \lambda_{2}^{12} \lambda_{3}^{9}+10166\, \lambda_{1}^{12} \lambda_{3}^{12}+1118260\, \lambda_{1}^{9} \lambda_{2}^{3} \lambda_{3}^{12}+4696692\, \lambda_{1}^{6} \lambda_{2}^{6} \lambda_{3}^{12}+1118260\, \lambda_{1}^{3} \lambda_{2}^{9} \lambda_{3}^{12}+10166\, \lambda_{2}^{12} \lambda_{3}^{12}-206448\, \lambda_{1}^{6} \lambda_{2}^{3} \lambda_{3}^{15}-206448\, \lambda_{1}^{3} \lambda_{2}^{6} \lambda_{3}^{15}+506\, \lambda_{1}^{6} \lambda_{3}^{18}+5060\, \lambda_{1}^{3} \lambda_{2}^{3} \lambda_{3}^{18}+506\, \lambda_{2}^{6} \lambda_{3}^{18}+111\, \lambda_{3}^{24}-5060\, \lambda_{1}^{18} \lambda_{2}^{3} \lambda_{4}^{3}+206448\, \lambda_{1}^{15} \lambda_{2}^{6} \lambda_{4}^{3}-1118260\, \lambda_{1}^{12} \lambda_{2}^{9} \lambda_{4}^{3}+1118260\, \lambda_{1}^{9} \lambda_{2}^{12} \lambda_{4}^{3}-206448\, \lambda_{1}^{6} \lambda_{2}^{15} \lambda_{4}^{3}+5060\, \lambda_{1}^{3} \lambda_{2}^{18} \lambda_{4}^{3}-5060\, \lambda_{1}^{18} \lambda_{3}^{3} \lambda_{4}^{3}+5060\, \lambda_{2}^{18} \lambda_{3}^{3} \lambda_{4}^{3}-206448\, \lambda_{1}^{15} \lambda_{3}^{6} \lambda_{4}^{3}+206448\, \lambda_{2}^{15} \lambda_{3}^{6} \lambda_{4}^{3}-1118260\, \lambda_{1}^{12} \lambda_{3}^{9} \lambda_{4}^{3}+1118260\, \lambda_{2}^{12} \lambda_{3}^{9} \lambda_{4}^{3}-1118260\, \lambda_{1}^{9} \lambda_{3}^{12} \lambda_{4}^{3}+1118260\, \lambda_{2}^{9} \lambda_{3}^{12} \lambda_{4}^{3}-206448\, \lambda_{1}^{6} \lambda_{3}^{15} \lambda_{4}^{3}+206448\, \lambda_{2}^{6} \lambda_{3}^{15} \lambda_{4}^{3}-5060\, \lambda_{1}^{3} \lambda_{3}^{18} \lambda_{4}^{3}+5060\, \lambda_{2}^{3} \lambda_{3}^{18} \lambda_{4}^{3}+506\, \lambda_{1}^{18} \lambda_{4}^{6}-206448\, \lambda_{1}^{15} \lambda_{2}^{3} \lambda_{4}^{6}+4696692\, \lambda_{1}^{12} \lambda_{2}^{6} \lambda_{4}^{6}-12300860\, \lambda_{1}^{9} \lambda_{2}^{9} \lambda_{4}^{6}+4696692\, \lambda_{1}^{6} \lambda_{2}^{12} \lambda_{4}^{6}-206448\, \lambda_{1}^{3} \lambda_{2}^{15} \lambda_{4}^{6}+506\, \lambda_{2}^{18} \lambda_{4}^{6}+206448\, \lambda_{1}^{15} \lambda_{3}^{3} \lambda_{4}^{6}+206448\, \lambda_{2}^{15} \lambda_{3}^{3} \lambda_{4}^{6}+4696692\, \lambda_{1}^{12} \lambda_{3}^{6} \lambda_{4}^{6}+4696692\, \lambda_{2}^{12} \lambda_{3}^{6} \lambda_{4}^{6}+12300860\, \lambda_{1}^{9} \lambda_{3}^{9} \lambda_{4}^{6}+12300860\, \lambda_{2}^{9} \lambda_{3}^{9} \lambda_{4}^{6}+4696692\, \lambda_{1}^{6} \lambda_{3}^{12} \lambda_{4}^{6}+4696692\, \lambda_{2}^{6} \lambda_{3}^{12} \lambda_{4}^{6}+206448\, \lambda_{1}^{3} \lambda_{3}^{15} \lambda_{4}^{6}+206448\, \lambda_{2}^{3} \lambda_{3}^{15} \lambda_{4}^{6}+506\, \lambda_{3}^{18} \lambda_{4}^{6}-1118260\, \lambda_{1}^{12} \lambda_{2}^{3} \lambda_{4}^{9}+12300860\, \lambda_{1}^{9} \lambda_{2}^{6} \lambda_{4}^{9}-12300860\, \lambda_{1}^{6} \lambda_{2}^{9} \lambda_{4}^{9}+1118260\, \lambda_{1}^{3} \lambda_{2}^{12} \lambda_{4}^{9}-1118260\, \lambda_{1}^{12} \lambda_{3}^{3} \lambda_{4}^{9}+1118260\, \lambda_{2}^{12} \lambda_{3}^{3} \lambda_{4}^{9}-12300860\, \lambda_{1}^{9} \lambda_{3}^{6} \lambda_{4}^{9}+12300860\, \lambda_{2}^{9} \lambda_{3}^{6} \lambda_{4}^{9}-12300860\, \lambda_{1}^{6} \lambda_{3}^{9} \lambda_{4}^{9}+12300860\, \lambda_{2}^{6} \lambda_{3}^{9} \lambda_{4}^{9}-1118260\, \lambda_{1}^{3} \lambda_{3}^{12} \lambda_{4}^{9}+1118260\, \lambda_{2}^{3} \lambda_{3}^{12} \lambda_{4}^{9}+10166\, \lambda_{1}^{12} \lambda_{4}^{12}-1118260\, \lambda_{1}^{9} \lambda_{2}^{3} \lambda_{4}^{12}+4696692\, \lambda_{1}^{6} \lambda_{2}^{6} \lambda_{4}^{12}-1118260\, \lambda_{1}^{3} \lambda_{2}^{9} \lambda_{4}^{12}+10166\, \lambda_{2}^{12} \lambda_{4}^{12}+1118260\, \lambda_{1}^{9} \lambda_{3}^{3} \lambda_{4}^{12}+1118260\, \lambda_{2}^{9} \lambda_{3}^{3} \lambda_{4}^{12}+4696692\, \lambda_{1}^{6} \lambda_{3}^{6} \lambda_{4}^{12}+4696692\, \lambda_{2}^{6} \lambda_{3}^{6} \lambda_{4}^{12}+1118260\, \lambda_{1}^{3} \lambda_{3}^{9} \lambda_{4}^{12}+1118260\, \lambda_{2}^{3} \lambda_{3}^{9} \lambda_{4}^{12}+10166\, \lambda_{3}^{12} \lambda_{4}^{12}-206448\, \lambda_{1}^{6} \lambda_{2}^{3} \lambda_{4}^{15}+206448\, \lambda_{1}^{3} \lambda_{2}^{6} \lambda_{4}^{15}-206448\, \lambda_{1}^{6} \lambda_{3}^{3} \lambda_{4}^{15}+206448\, \lambda_{2}^{6} \lambda_{3}^{3} \lambda_{4}^{15}-206448\, \lambda_{1}^{3} \lambda_{3}^{6} \lambda_{4}^{15}+206448\, \lambda_{2}^{3} \lambda_{3}^{6} \lambda_{4}^{15}+506\, \lambda_{1}^{6} \lambda_{4}^{18}-5060\, \lambda_{1}^{3} \lambda_{2}^{3} \lambda_{4}^{18}+506\, \lambda_{2}^{6} \lambda_{4}^{18}+5060\, \lambda_{1}^{3} \lambda_{3}^{3} \lambda_{4}^{18}+5060\, \lambda_{2}^{3} \lambda_{3}^{3} \lambda_{4}^{18}+506\, \lambda_{3}^{6} \lambda_{4}^{18}+111\, \lambda_{4}^{24})\left(28728\right)$

$f_{30} = (584\, \lambda_{1}^{30}-435\, \lambda_{1}^{24} \lambda_{2}^{6}-63365\, \lambda_{1}^{18} \lambda_{2}^{12}-63365\, \lambda_{1}^{12} \lambda_{2}^{18}-435\, \lambda_{1}^{6} \lambda_{2}^{24}+584\, \lambda_{2}^{30}+4350\, \lambda_{1}^{24} \lambda_{2}^{3} \lambda_{3}^{3}+440220\, \lambda_{1}^{21} \lambda_{2}^{6} \lambda_{3}^{3}+6970150\, \lambda_{1}^{18} \lambda_{2}^{9} \lambda_{3}^{3}+25852920\, \lambda_{1}^{15} \lambda_{2}^{12} \lambda_{3}^{3}+25852920\, \lambda_{1}^{12} \lambda_{2}^{15} \lambda_{3}^{3}+6970150\, \lambda_{1}^{9} \lambda_{2}^{18} \lambda_{3}^{3}+440220\, \lambda_{1}^{6} \lambda_{2}^{21} \lambda_{3}^{3}+4350\, \lambda_{1}^{3} \lambda_{2}^{24} \lambda_{3}^{3}-435\, \lambda_{1}^{24} \lambda_{3}^{6}-440220\, \lambda_{1}^{21} \lambda_{2}^{3} \lambda_{3}^{6}-29274630\, \lambda_{1}^{18} \lambda_{2}^{6} \lambda_{3}^{6}-284382120\, \lambda_{1}^{15} \lambda_{2}^{9} \lambda_{3}^{6}-588153930\, \lambda_{1}^{12} \lambda_{2}^{12} \lambda_{3}^{6}-284382120\, \lambda_{1}^{9} \lambda_{2}^{15} \lambda_{3}^{6}-29274630\, \lambda_{1}^{6} \lambda_{2}^{18} \lambda_{3}^{6}-440220\, \lambda_{1}^{3} \lambda_{2}^{21} \lambda_{3}^{6}-435\, \lambda_{2}^{24} \lambda_{3}^{6}+6970150\, \lambda_{1}^{18} \lambda_{2}^{3} \lambda_{3}^{9}+284382120\, \lambda_{1}^{15} \lambda_{2}^{6} \lambda_{3}^{9}+1540403150\, \lambda_{1}^{12} \lambda_{2}^{9} \lambda_{3}^{9}+1540403150\, \lambda_{1}^{9} \lambda_{2}^{12} \lambda_{3}^{9}+284382120\, \lambda_{1}^{6} \lambda_{2}^{15} \lambda_{3}^{9}+6970150\, \lambda_{1}^{3} \lambda_{2}^{18} \lambda_{3}^{9}-63365\, \lambda_{1}^{18} \lambda_{3}^{12}-25852920\, \lambda_{1}^{15} \lambda_{2}^{3} \lambda_{3}^{12}-588153930\, \lambda_{1}^{12} \lambda_{2}^{6} \lambda_{3}^{12}-1540403150\, \lambda_{1}^{9} \lambda_{2}^{9} \lambda_{3}^{12}-588153930\, \lambda_{1}^{6} \lambda_{2}^{12} \lambda_{3}^{12}-25852920\, \lambda_{1}^{3} \lambda_{2}^{15} \lambda_{3}^{12}-63365\, \lambda_{2}^{18} \lambda_{3}^{12}+25852920\, \lambda_{1}^{12} \lambda_{2}^{3} \lambda_{3}^{15}+284382120\, \lambda_{1}^{9} \lambda_{2}^{6} \lambda_{3}^{15}+284382120\, \lambda_{1}^{6} \lambda_{2}^{9} \lambda_{3}^{15}+25852920\, \lambda_{1}^{3} \lambda_{2}^{12} \lambda_{3}^{15}-63365\, \lambda_{1}^{12} \lambda_{3}^{18}-6970150\, \lambda_{1}^{9} \lambda_{2}^{3} \lambda_{3}^{18}-29274630\, \lambda_{1}^{6} \lambda_{2}^{6} \lambda_{3}^{18}-6970150\, \lambda_{1}^{3} \lambda_{2}^{9} \lambda_{3}^{18}-63365\, \lambda_{2}^{12} \lambda_{3}^{18}+440220\, \lambda_{1}^{6} \lambda_{2}^{3} \lambda_{3}^{21}+440220\, \lambda_{1}^{3} \lambda_{2}^{6} \lambda_{3}^{21}-435\, \lambda_{1}^{6} \lambda_{3}^{24}-4350\, \lambda_{1}^{3} \lambda_{2}^{3} \lambda_{3}^{24}-435\, \lambda_{2}^{6} \lambda_{3}^{24}+584\, \lambda_{3}^{30}+4350\, \lambda_{1}^{24} \lambda_{2}^{3} \lambda_{4}^{3}-440220\, \lambda_{1}^{21} \lambda_{2}^{6} \lambda_{4}^{3}+6970150\, \lambda_{1}^{18} \lambda_{2}^{9} \lambda_{4}^{3}-25852920\, \lambda_{1}^{15} \lambda_{2}^{12} \lambda_{4}^{3}+25852920\, \lambda_{1}^{12} \lambda_{2}^{15} \lambda_{4}^{3}-6970150\, \lambda_{1}^{9} \lambda_{2}^{18} \lambda_{4}^{3}+440220\, \lambda_{1}^{6} \lambda_{2}^{21} \lambda_{4}^{3}-4350\, \lambda_{1}^{3} \lambda_{2}^{24} \lambda_{4}^{3}+4350\, \lambda_{1}^{24} \lambda_{3}^{3} \lambda_{4}^{3}-4350\, \lambda_{2}^{24} \lambda_{3}^{3} \lambda_{4}^{3}+440220\, \lambda_{1}^{21} \lambda_{3}^{6} \lambda_{4}^{3}-440220\, \lambda_{2}^{21} \lambda_{3}^{6} \lambda_{4}^{3}+6970150\, \lambda_{1}^{18} \lambda_{3}^{9} \lambda_{4}^{3}-6970150\, \lambda_{2}^{18} \lambda_{3}^{9} \lambda_{4}^{3}+25852920\, \lambda_{1}^{15} \lambda_{3}^{12} \lambda_{4}^{3}-25852920\, \lambda_{2}^{15} \lambda_{3}^{12} \lambda_{4}^{3}+25852920\, \lambda_{1}^{12} \lambda_{3}^{15} \lambda_{4}^{3}-25852920\, \lambda_{2}^{12} \lambda_{3}^{15} \lambda_{4}^{3}+6970150\, \lambda_{1}^{9} \lambda_{3}^{18} \lambda_{4}^{3}-6970150\, \lambda_{2}^{9} \lambda_{3}^{18} \lambda_{4}^{3}+440220\, \lambda_{1}^{6} \lambda_{3}^{21} \lambda_{4}^{3}-440220\, \lambda_{2}^{6} \lambda_{3}^{21} \lambda_{4}^{3}+4350\, \lambda_{1}^{3} \lambda_{3}^{24} \lambda_{4}^{3}-4350\, \lambda_{2}^{3} \lambda_{3}^{24} \lambda_{4}^{3}-435\, \lambda_{1}^{24} \lambda_{4}^{6}+440220\, \lambda_{1}^{21} \lambda_{2}^{3} \lambda_{4}^{6}-29274630\, \lambda_{1}^{18} \lambda_{2}^{6} \lambda_{4}^{6}+284382120\, \lambda_{1}^{15} \lambda_{2}^{9} \lambda_{4}^{6}-588153930\, \lambda_{1}^{12} \lambda_{2}^{12} \lambda_{4}^{6}+284382120\, \lambda_{1}^{9} \lambda_{2}^{15} \lambda_{4}^{6}-29274630\, \lambda_{1}^{6} \lambda_{2}^{18} \lambda_{4}^{6}+440220\, \lambda_{1}^{3} \lambda_{2}^{21} \lambda_{4}^{6}-435\, \lambda_{2}^{24} \lambda_{4}^{6}-440220\, \lambda_{1}^{21} \lambda_{3}^{3} \lambda_{4}^{6}-440220\, \lambda_{2}^{21} \lambda_{3}^{3} \lambda_{4}^{6}-29274630\, \lambda_{1}^{18} \lambda_{3}^{6} \lambda_{4}^{6}-29274630\, \lambda_{2}^{18} \lambda_{3}^{6} \lambda_{4}^{6}-284382120\, \lambda_{1}^{15} \lambda_{3}^{9} \lambda_{4}^{6}-284382120\, \lambda_{2}^{15} \lambda_{3}^{9} \lambda_{4}^{6}-588153930\, \lambda_{1}^{12} \lambda_{3}^{12} \lambda_{4}^{6}-588153930\, \lambda_{2}^{12} \lambda_{3}^{12} \lambda_{4}^{6}-284382120\, \lambda_{1}^{9} \lambda_{3}^{15} \lambda_{4}^{6}-284382120\, \lambda_{2}^{9} \lambda_{3}^{15} \lambda_{4}^{6}-29274630\, \lambda_{1}^{6} \lambda_{3}^{18} \lambda_{4}^{6}-29274630\, \lambda_{2}^{6} \lambda_{3}^{18} \lambda_{4}^{6}-440220\, \lambda_{1}^{3} \lambda_{3}^{21} \lambda_{4}^{6}-440220\, \lambda_{2}^{3} \lambda_{3}^{21} \lambda_{4}^{6}-435\, \lambda_{3}^{24} \lambda_{4}^{6}+6970150\, \lambda_{1}^{18} \lambda_{2}^{3} \lambda_{4}^{9}-284382120\, \lambda_{1}^{15} \lambda_{2}^{6} \lambda_{4}^{9}+1540403150\, \lambda_{1}^{12} \lambda_{2}^{9} \lambda_{4}^{9}-1540403150\, \lambda_{1}^{9} \lambda_{2}^{12} \lambda_{4}^{9}+284382120\, \lambda_{1}^{6} \lambda_{2}^{15} \lambda_{4}^{9}-6970150\, \lambda_{1}^{3} \lambda_{2}^{18} \lambda_{4}^{9}+6970150\, \lambda_{1}^{18} \lambda_{3}^{3} \lambda_{4}^{9}-6970150\, \lambda_{2}^{18} \lambda_{3}^{3} \lambda_{4}^{9}+284382120\, \lambda_{1}^{15} \lambda_{3}^{6} \lambda_{4}^{9}-284382120\, \lambda_{2}^{15} \lambda_{3}^{6} \lambda_{4}^{9}+1540403150\, \lambda_{1}^{12} \lambda_{3}^{9} \lambda_{4}^{9}-1540403150\, \lambda_{2}^{12} \lambda_{3}^{9} \lambda_{4}^{9}+1540403150\, \lambda_{1}^{9} \lambda_{3}^{12} \lambda_{4}^{9}-1540403150\, \lambda_{2}^{9} \lambda_{3}^{12} \lambda_{4}^{9}+284382120\, \lambda_{1}^{6} \lambda_{3}^{15} \lambda_{4}^{9}-284382120\, \lambda_{2}^{6} \lambda_{3}^{15} \lambda_{4}^{9}+6970150\, \lambda_{1}^{3} \lambda_{3}^{18} \lambda_{4}^{9}-6970150\, \lambda_{2}^{3} \lambda_{3}^{18} \lambda_{4}^{9}-63365\, \lambda_{1}^{18} \lambda_{4}^{12}+25852920\, \lambda_{1}^{15} \lambda_{2}^{3} \lambda_{4}^{12}-588153930\, \lambda_{1}^{12} \lambda_{2}^{6} \lambda_{4}^{12}+1540403150\, \lambda_{1}^{9} \lambda_{2}^{9} \lambda_{4}^{12}-588153930\, \lambda_{1}^{6} \lambda_{2}^{12} \lambda_{4}^{12}+25852920\, \lambda_{1}^{3} \lambda_{2}^{15} \lambda_{4}^{12}-63365\, \lambda_{2}^{18} \lambda_{4}^{12}-25852920\, \lambda_{1}^{15} \lambda_{3}^{3} \lambda_{4}^{12}-25852920\, \lambda_{2}^{15} \lambda_{3}^{3} \lambda_{4}^{12}-588153930\, \lambda_{1}^{12} \lambda_{3}^{6} \lambda_{4}^{12}-588153930\, \lambda_{2}^{12} \lambda_{3}^{6} \lambda_{4}^{12}-1540403150\, \lambda_{1}^{9} \lambda_{3}^{9} \lambda_{4}^{12}-1540403150\, \lambda_{2}^{9} \lambda_{3}^{9} \lambda_{4}^{12}-588153930\, \lambda_{1}^{6} \lambda_{3}^{12} \lambda_{4}^{12}-588153930\, \lambda_{2}^{6} \lambda_{3}^{12} \lambda_{4}^{12}-25852920\, \lambda_{1}^{3} \lambda_{3}^{15} \lambda_{4}^{12}-25852920\, \lambda_{2}^{3} \lambda_{3}^{15} \lambda_{4}^{12}-63365\, \lambda_{3}^{18} \lambda_{4}^{12}+25852920\, \lambda_{1}^{12} \lambda_{2}^{3} \lambda_{4}^{15}-284382120\, \lambda_{1}^{9} \lambda_{2}^{6} \lambda_{4}^{15}+284382120\, \lambda_{1}^{6} \lambda_{2}^{9} \lambda_{4}^{15}-25852920\, \lambda_{1}^{3} \lambda_{2}^{12} \lambda_{4}^{15}+25852920\, \lambda_{1}^{12} \lambda_{3}^{3} \lambda_{4}^{15}-25852920\, \lambda_{2}^{12} \lambda_{3}^{3} \lambda_{4}^{15}+284382120\, \lambda_{1}^{9} \lambda_{3}^{6} \lambda_{4}^{15}-284382120\, \lambda_{2}^{9} \lambda_{3}^{6} \lambda_{4}^{15}+284382120\, \lambda_{1}^{6} \lambda_{3}^{9} \lambda_{4}^{15}-284382120\, \lambda_{2}^{6} \lambda_{3}^{9} \lambda_{4}^{15}+25852920\, \lambda_{1}^{3} \lambda_{3}^{12} \lambda_{4}^{15}-25852920\, \lambda_{2}^{3} \lambda_{3}^{12} \lambda_{4}^{15}-63365\, \lambda_{1}^{12} \lambda_{4}^{18}+6970150\, \lambda_{1}^{9} \lambda_{2}^{3} \lambda_{4}^{18}-29274630\, \lambda_{1}^{6} \lambda_{2}^{6} \lambda_{4}^{18}+6970150\, \lambda_{1}^{3} \lambda_{2}^{9} \lambda_{4}^{18}-63365\, \lambda_{2}^{12} \lambda_{4}^{18}-6970150\, \lambda_{1}^{9} \lambda_{3}^{3} \lambda_{4}^{18}-6970150\, \lambda_{2}^{9} \lambda_{3}^{3} \lambda_{4}^{18}-29274630\, \lambda_{1}^{6} \lambda_{3}^{6} \lambda_{4}^{18}-29274630\, \lambda_{2}^{6} \lambda_{3}^{6} \lambda_{4}^{18}-6970150\, \lambda_{1}^{3} \lambda_{3}^{9} \lambda_{4}^{18}-6970150\, \lambda_{2}^{3} \lambda_{3}^{9} \lambda_{4}^{18}-63365\, \lambda_{3}^{12} \lambda_{4}^{18}+440220\, \lambda_{1}^{6} \lambda_{2}^{3} \lambda_{4}^{21}-440220\, \lambda_{1}^{3} \lambda_{2}^{6} \lambda_{4}^{21}+440220\, \lambda_{1}^{6} \lambda_{3}^{3} \lambda_{4}^{21}-440220\, \lambda_{2}^{6} \lambda_{3}^{3} \lambda_{4}^{21}+440220\, \lambda_{1}^{3} \lambda_{3}^{6} \lambda_{4}^{21}-440220\, \lambda_{2}^{3} \lambda_{3}^{6} \lambda_{4}^{21}-435\, \lambda_{1}^{6} \lambda_{4}^{24}+4350\, \lambda_{1}^{3} \lambda_{2}^{3} \lambda_{4}^{24}-435\, \lambda_{2}^{6} \lambda_{4}^{24}-4350\, \lambda_{1}^{3} \lambda_{3}^{3} \lambda_{4}^{24}-4350\, \lambda_{2}^{3} \lambda_{3}^{3} \lambda_{4}^{24}-435\, \lambda_{3}^{6} \lambda_{4}^{24}+584\, \lambda_{4}^{30})\left(-147420\right)$

\subsection*{Family two} Basic semi-simple form $p = \lambda_1p_1 + \lambda_2p_2 - \lambda_3p_3$.  We randomize the $\lambda$'s and add the different nilpotent parts $e$: 

Here are the corresponding adjoint rank profiles:

\textnumero 3 $e=0$:
$\left(
\right)$

\end{document}